\newenvironment{gap}
  {\color{blue}}%
  {}%
\newenvironment{nap} % nap - it's the new gap.
  {\color{red}}%
  {}%
\newtheorem{defn}{Definition}[section]
\newtheorem{lem}[defn]{Lemma}
\newtheorem{thm}[defn]{Theorem}
\newtheorem{cor}[defn]{Corollary}
\newtheorem{ex}[defn]{Example}
\newtheorem{prop}[defn]{Proposition}
\newtheorem{rem}[defn]{Remark}
\def\p{\mathcal{P}}
\DeclareMathOperator{\Ker}{Ker}
\DeclareMathOperator{\Dom}{Dom}
\DeclareMathOperator{\Ran}{Ran}
\DeclareMathOperator{\loc}{loc}
\DeclareMathOperator{\tr}{tr}
\newcommand{\en}{E\ensuremath{{_0}}}
\title[Cohomology for spatial super-product systems]{Cohomology for spatial super-product systems 
%and CAR flows on type III factors
}
\author[O. Margetts]{Oliver T. Margetts}
\address{Department of Mathematics and Statistics,
 Fylde College,
Lancaster University, Lancaster LA1 4YF, U.K.}
\email{o.margetts@lancaster.ac.uk}
\author[R. Srinivasan]{R. Srinivasan}
\address{Chennai Mathematical Institute, H1, SIPCOT IT Park, Kelambakkam, Siruseri 603103, India.}
\email{vasanth@cmi.ac.in}
\subjclass[2010]{Primary  46L55; Secondary 46L40, 46L53, 46C99}
 \keywords{*-endomorphisms, E$_0$-semigroups, II$_1$ factors, noncommutative probability, super-product systems}
\begin{document}
%%% addition formula for tensor products

%%%%% NEW COMMANDS %%%%%

%standard maths
\newcommand{\norm}[1]{\ensuremath{\left\|#1\right\|}}
\newcommand{\ip}[1]{\ensuremath{\left\langle#1\right\rangle}}
\newcommand{\dist}{\hbox{dist}}
\newcommand{\bra}[1]{\ensuremath{\left\langle#1\right|}}
\newcommand{\ket}[1]{\ensuremath{\left|#1\right\rangle}}
\newcommand{\lin}{\ensuremath{\mathrm{Span}}}
\renewcommand{\ker}{\ensuremath{\mathrm{ker}}}
\newcommand{\ran}{\ensuremath{\mathrm{Ran}}}
\newcommand{\dom}{\ensuremath{\mathrm{Dom}}}
\newcommand{\supp}{\ensuremath{\mathrm{supp}}}
\newcommand{\id}{\hbox{id}}

%tensor products
\newcommand{\overtimes}{\ensuremath{\overline{\otimes}}}
\newcommand{\undertimes}{\ensuremath{\underline{\otimes}}}
\newcommand{\opower}[1]{\ensuremath{^{\otimes #1}}}

%special letters
\newcommand{\N}{\ensuremath{\mathbb{N}}} %naturals
\newcommand{\Z}{\ensuremath{\mathbb{Z}}} %integers
\newcommand{\Q}{\ensuremath{\mathbb{Q}}} %rationals
\newcommand{\R}{\ensuremath{\mathbb{R}}} %reals
\newcommand{\C}{\ensuremath{\mathbb{C}}} %complex
\newcommand{\F}{\ensuremath{\mathcal{F}}} %Fock space
\newcommand{\B}{\ensuremath{\mathcal{B}}}
\newcommand{\D}{\ensuremath{\mathcal{D}}} %a domain
\newcommand{\E}{\ensuremath{\mathcal{E}}} %exponential vectors
\newcommand{\W}{\ensuremath{\mathcal{W}}}
\newcommand{\V}{\ensuremath{\mathcal{V}}}
\renewcommand{\H}{\ensuremath{\mathcal{H}}}
\newcommand{\K}{\ensuremath{\mathsf{K}}}
\newcommand{\h}{\ensuremath{\mathrm{h}}}
\renewcommand{\k}{\ensuremath{\mathrm{k}}}
\newcommand{\J}{\ensuremath{\mathscr{J}}}
\newcommand{\A}{\ensuremath{\mathcal{A}}}
\renewcommand{\L}{\ensuremath{\mathcal{L}}}
\newcommand{\n}{\ensuremath{\mathrm{N}}} %von Neumann algebra
\newcommand{\m}{\ensuremath{\mathrm{M}}} %von Neumann algebra

%E-semigroups
\newcommand{\munit}{\ensuremath{\mu\hbox{nit~}}}
\newcommand{\munits}{\ensuremath{\mu\hbox{nits~}}}
\newcommand{\unitset}[1]{\ensuremath{\mathcal{U}_{#1}}}
\newcommand{\unitspace}[1]{\ensuremath{H(\mathcal{U}_{#1})}}
\newcommand{\munitset}[1]{\ensuremath{\mathcal{U}_{#1,#1'}}}
\newcommand{\munitspace}[1]{\ensuremath{H(\mathcal{U}_{#1,#1'})}}
\newcommand{\gaugespace}[1]{\ensuremath{H(G(#1))}}
\newcommand{\ind}{\ensuremath{\mathrm{Ind}}}

%Stochastic calculus
\newcommand{\expectation}{\ensuremath{\mathbb{E}}}
\newcommand{\Exp}{\ensuremath{\mathrm{Exp}}}
\newcommand{\Log}{\ensuremath{\mathrm{Log}}}

%Operator algebras
\newcommand{\alg}{\ensuremath{\mathrm{A}}}
\newcommand{\factor}{\ensuremath{\mathrm{M}}}
\newcommand{\oneinfty}{I$_\infty~$}
\newcommand{\twoone}{II$_1~$}
\newcommand{\twoinfty}{II$_\infty~$}
\newcommand{\three}[1]{III$_{#1}~$}
\newcommand{\hyperfinite}{\ensuremath{\mathcal{R}}}
\newcommand{\semiflowalg}{\ensuremath{\mathcal{A}}}

%\newenvironment{makered}{\color{red}}{}

% abstract to be rewritten.

\begin{abstract}
We introduce a cohomology theory for spatial super- product systems and compute the $2-$cocycles  for some basic examples called as Clifford super-product systems, thereby distinguish them up to isomorphism. This consequently proves that a family of \en-semigroups on type III factors, which we call as CAR flows, are non-cocycle-conjugate for different ranks.  Similar results follows for the even CAR flows as well. We also compute the automorphsim group of the Clifford super-product systems.
%and GICAR flows, which are restrictions of CAR flows to the even CAR algebra and the gauge invariant subalgebra.  Further the CAR flows and the even CAR flows are not cocycle conjugate to any of the GICAR flows. 
\end{abstract}

 \maketitle

% inro- recall the connection betaeen spdct and e-0-semigroup. preliminaries discuss example dn other notations. First section discuss units and Arveson's tim, ethn classification of type II 2-addits etc. Then a section on computation, non-iso spdct. Then CAR flows. (gauge cocycles are trivial).

\section{Introduction}

The super-product system of Hilbert spaces is a generalisation of Arveson's product system of Hilbert spaces. These were originally defined in \cite{MS1}, but the idea was being discussed before that. Apart from being an interesting mathematical object on its own,  super-product systems arises naturally as an invariant associated to \en-semigroups on factors, as shown in \cite{MS2}. In a sense this generalizes Arveson's association of product systems to \en-semigroups on type I factors, but this association is not one-one in the non-type I case.  We recall the basic definitions and the association of super-product system to an \en-semigroup in this introductory section.

\begin{defn}
 \label{superproductsystem}
 A super-product system of Hilbert spaces is a one parameter family
 of separable Hilbert spaces $\{H_t: t > 0 \}$, together with isometries $$U_{s,t} : H_s \otimes H_t ~\mapsto H_{s+t}~ \mbox{for}~ s, t \in (0,\infty),$$
 satisfying the following two axioms of associativity and measurability.

 \medskip
 \noindent(i) (Associativity) For any $s_1, s_2, s_3 \in (0,\infty)$
 $$U_{s_1, s_2 + s_3}( 1_{H_{s_1}} \otimes U_{s_2 ,
 s_3})=  U_{s_1+ s_2 , s_3}( U_{s_1 ,
 s_2} \otimes 1_{H_{s_3}}).$$

 \noindent (ii) (Measurability)  The space $\H=\{(t, \xi_t): t \in (0,\infty), \xi_t \in H_t\}$ is equipped with a structure of standard Borel space that is compatible with the projection $p:\H\mapsto (0,\infty)$ given by $p((t, \xi_t)=t$, tensor products and the inner products (see  \cite[Remark 3.1.2]{Arv}). 
\end{defn}

A super-product system is an Arveson product system if the isometries $U_{s,t}$ are unitaries and further the condition of local triviality is satisfied, that is there exists a single separable Hilbert space $H$ satisfying $\H \cong (0,\infty) \times H$ as measure spaces (see \cite[Remark 3.1.2]{Arv}).

%see skiede's comment and see whether any remark needed. 

% Spdct is dimension 1 or infinity. Need a proof. 

\begin{defn}\label{spdct-iso} By an isomorphism between two super-product systems $(H^1_t, U^1_{s,t})$ and $(H^2_t, U^2_{s,t})$ we mean an isomorphism of Borel spaces $V:\H^1 \mapsto \H^2$ whose restriction to each fiber provides a unitary operator $V_t:H^1_t\mapsto H^2_t$ satisfying \begin{equation*}\label{prodiso}V_{s+t}U^1_{s,t}= U_{s,t}^2 (V_s \otimes V_t).\end{equation*} 
\end{defn}

Basic examples of product systems are exponential product systems and antisymmetric product systems, whose subsystems provides our basic examples of proper super-product systems. These are discussed in the next section.

%\en-semigroups are semigroups of normal unital $*-$endomorphisms on a von Neumann algebra which are also $\sigma$-weakly continuous. 
The study of \en-semigroups was initiated by R. T. Powers (see \cite{Pow}), and Arveson made many important contributions through a sequence of papers in late 80s and 90s (see \cite{Arv}). Arveson showed that \en-semigroups on type I factors are completely classified by their associated product systems.  This gives a rough division of \en-semigroups into three types, based on the existence of units, namely I, II and III. The type I \en-semigroups on type I factors are cocycle conjugate to the CCR flows (\cite{Arv}), but there are uncountably many \en-semigroups of types II and III (\cite{pow2},  \cite{T1},  \cite{pdct}, \cite{genccr}, \cite{toepcar} \cite{lieb}) on type I factors. 

%Arveson theory of product system of Hilbert spaces can be generalized to the theory of product system of Hilbert modules, which 
%(especially the notion of von Neumann modules)  was initiated by Michael Skiede (\cite{skeide}). Another way of generalizing the association of Arveson product systems for type I factors to \en-semgroups on general factors is through super product systems. Though the association of super-product system fails to be a complete invariant for \en-semigroups on non-type-I factors unlike the Arveson systems in the case of type I factors, it is useful in distinguishing the basic examples. 
%After defining the basic notions, we recall the way of associating super product systems to \en-semigroups on factors.

%super-product systems arises naturally as an invariant associated to \en-semigroups on factors as explained below. 
We say a von Neumann algebra $\m$ is in standard form if $\m\subseteq B(H)$ has a cyclic and separating vector $\Omega \in H$. 
By replacing a conjugate \en-semigroup if needed, without loss of generality, we can always assume that an \en-semigroup is acting on a von Neumann algebra in a standard form, thanks to \cite[Lemma 2.4 ]{MS2}.

\begin{defn}
An \en-semigroup on a von Neumann algebra $\m$ is a semigroup $\{\alpha_t: t\geq0\}$ of normal, unital *-endomorphisms of $\m$ satisfying
\begin{itemize}
 \item[(i)] $\alpha_0=id$,
 \item[(ii)] $\alpha_t(\factor)\neq\factor$ for all $t>0$,
 \item[(iii)] $t\mapsto \rho(\alpha_t(x))$ is continuous for all $x\in\factor$, $\rho\in\factor_*$.
\end{itemize}
\end{defn}

\begin{defn} A cocycle for an \en-semigroup $\alpha$ on $\m$ is a strongly continuous family of unitaries $\{U_t: t\geq 0\}\subseteq \m$ satisfying $U_s\alpha_s(U_t)=U_{s+t}$ for all $s,t\geq0$.
\end{defn}

For a cocycle $\{U_t: t\geq 0\}$, we automatically have $U_0=1$. Furthermore the family of endomorphisms $\alpha_t^U(x):=U_t\alpha_t(x)U_t^*$ defines an \en-semigroup. This leads to the equivalence of cocycle conjugacy on \en-semigroups. The following definition is called as spatial conjugacy, but we can assume any two conjugate \en-semigroups in standard form are spatially conjugate, thanks to  \cite[Lemma 2.4 ]{MS2}. So we take the following as the definition of conjugacy.

\begin{defn}
Two \en-semigroups $\alpha$ and $\beta$, acting standardly on $\m\subseteq B(H_1)$ and $ \n\subseteq B(H_2)$ respectively, are said to be conjugate if there exists a unitary $U:H_1\mapsto H_2$ satisfying
\begin{itemize}
 \item[(i)] $U\m U^*=\n$,
 \item[(ii)] $\beta_t(x)=U\alpha_t(U^*xU)U^*$ for all $t\geq0$, $x\in\n$,
\end{itemize}
\end{defn}

\begin{defn}\label{conjugacy def} Let $\alpha$ and $\beta$ be \en-semigroups on von Neumann algebras $\m$ and $\n$. 
Then $\alpha$ and $\beta$ are said to be \emph{cocycle conjugate} if there exists a cocycle $\{U_t: t\geq 0\}$ for $\alpha$ such that $\beta$ is conjugate to $\alpha^U$. 
\end{defn}

Given an \en-semigroup $\theta$ on $B(H)$, the associated Arveson product system $(H_t, U_{s,t})$ is defined by $$H_t =\{T \in B(H); \alpha_t(X) T = TX, ~ \forall ~X \in
B(H)\}$$ with inner product $\langle T, S \rangle 1_H= S^*T$ and $U_{s,t}(T\otimes S) = TS$ for $T \in H_s, S \in H_t$. 
%Arveson product systems forms a complete invariant for \en-semigroups on type I factors (see\cite{Arv}). 
%%%%%% REF precisely
 
Let $\alpha$ be an \en-semigroup on a factor $\m$ with cyclic and separating vector $\Omega$ and let $J_\Omega$ be the modular conjugation associated with $\Omega$ by the Tomita-Takesaki theory. We can define a complementary \en-semigroup $\alpha'$ on $\m'$ by setting
 $$\alpha'_t(x')=J_\Omega\alpha_t(J_\Omega x'J_\Omega)J_\Omega\qquad (x'\in\m').$$  The complementary \en-semigroup is determined up to conjugacy, thanks to the following Proposition (for proof see  \cite[Proposition 3.1 ]{MS2}).

 \begin{prop}\label{doubly intertwining unitary shizzle}
 Let $\m$ and $\n$ be von Neumann algebras acting standardly with respective cyclic and separating vectors $\Omega_1\in H_1$, $\Omega_2\in H_2$. If the \en-semigroups $\alpha$ on $\m$, and $\beta$ on $\n$ are cocycle conjugate, then $\alpha'$ and $\beta'$, defined with respect to $\Omega_1$ and $\Omega_2$ respectively,  are cocycle conjugate. Moreover, if $\alpha$ and $\beta$ are conjugate, then $\alpha'$ and $\beta'$ are spatially conjugate and the implementing unitary can be chosen so that it also intertwines $\alpha$ and $\beta$.
 \end{prop}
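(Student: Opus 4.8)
The plan is to prove the ``moreover'' (conjugate) assertion first, and then bootstrap to the cocycle-conjugate statement.

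So suppose $\alpha$ and $\beta$ are conjugate, and fix a unitary $U\colon H_1\to H_2$ with $U\m U^*=\n$ and $\beta_t(x)=U\alpha_t(U^*xU)U^*$ for all $t\geq 0$, $x\in\n$. The only thing $U$ may fail to do is carry $J_{\Omega_1}$ to $J_{\Omega_2}$, and the heart of the argument is to repair this. By the uniqueness of the standard form of a von Neumann algebra, applied to the $*$-isomorphism $x\mapsto UxU^*$ of $\m$ onto $\n$, there is a (unique) unitary $W\colon H_1\to H_2$ with $WxW^*=UxU^*$ for all $x\in\m$, with $WJ_{\Omega_1}W^*=J_{\Omega_2}$, and with $W$ carrying the natural positive cone of $\Omega_1$ onto that of $\Omega_2$. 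Since $W$ and $U$ implement the same isomorphism of $\m$, the unitary $u':=U^*W$ lies in $\m'$; thus $W=Uu'$, and since $u'$ commutes with $\m$ we also get $W^*yW=u'^*(U^*yU)u'=U^*yU$ for every $y\in\n$.

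Next I would verify that this single $W$ does everything. Because $u'\in\m'$ commutes with $\alpha_t(\m)\subseteq\m$, one has $W\alpha_t(W^*yW)W^*=Uu'\,\alpha_t(U^*yU)\,u'^*U^*=U\alpha_t(U^*yU)U^*=\beta_t(y)$, so $W$ still implements the conjugacy of $\alpha$ and $\beta$. For the complementary semigroups, $W\m'W^*=\n'$, and unwinding the definition gives, for $y'\in\n'$,
\[
\beta'_t(y')=J_{\Omega_2}\beta_t(J_{\Omega_2}y'J_{\Omega_2})J_{\Omega_2}
=WJ_{\Omega_1}W^*\,\beta_t\!\bigl(WJ_{\Omega_1}W^*y'WJ_{\Omega_1}W^*\bigr)\,WJ_{\Omega_1}W^*;
\]
substituting $\beta_t(\,\cdot\,)=W\alpha_t(W^*\,\cdot\,W)W^*$ and collecting the $W$'s, this reduces to $\beta'_t(y')=W\alpha'_t(W^*y'W)W^*$ (here one uses $J_{\Omega_1}W^*y'WJ_{\Omega_1}\in\m$, since $W^*y'W\in\m'$). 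Hence the single unitary $W$ simultaneously implements a spatial conjugacy of $\alpha'$ and $\beta'$ and intertwines $\alpha$ and $\beta$, which is the ``moreover'' claim.

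For the cocycle-conjugate case, write the hypothesis as: there is a cocycle $\{U_t\}\subseteq\m$ for $\alpha$ such that $\beta$ is conjugate to $\alpha^U$. I would first observe that $(\alpha^U)'$, formed with respect to $\Omega_1$ (still cyclic and separating for $\m$), is a cocycle perturbation of $\alpha'$: setting $V_t:=J_{\Omega_1}U_tJ_{\Omega_1}\in\m'$, the family $\{V_t\}$ is a strongly continuous cocycle for $\alpha'$ — the identity $V_s\alpha'_s(V_t)=V_{s+t}$ becomes $U_s\alpha_s(U_t)=U_{s+t}$ after conjugating by $J_{\Omega_1}$ — and a direct computation gives $(\alpha^U)'_t=(\alpha')^V_t$. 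Thus $(\alpha^U)'$ is cocycle conjugate to $\alpha'$. On the other hand, applying the already-proved first part to $\alpha^U$ and $\beta$ shows that $\beta'$ is conjugate to $(\alpha^U)'$. Since conjugacy is a special case of cocycle conjugacy and cocycle conjugacy is transitive, $\beta'$ is cocycle conjugate to $\alpha'$.

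The only genuinely non-formal ingredient is the passage from an arbitrary implementing unitary $U$ to a $W$ that also intertwines the modular conjugations; this is where the uniqueness of the standard form does the real work. Everything else is an unwinding of the definitions of $\alpha'$, $\beta'$ and of (cocycle) conjugacy, the one point needing a little care being the verification that $W=Uu'$ with $u'\in\m'$ is still a conjugacy for $\alpha$ and $\beta$, which is immediate because $u'$ commutes with $\m\supseteq\alpha_t(\m)$.
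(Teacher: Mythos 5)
Your proposal is correct, and it is essentially the intended argument: the paper itself gives no proof here but defers to \cite[Proposition 3.1]{MS2}, whose proof likewise rests on the two ingredients you use, namely the uniqueness of the standard form (to replace the implementing unitary $U$ by $W=Uu'$, $u'\in\m'$, intertwining the modular conjugations, which then simultaneously intertwines $\alpha,\beta$ and $\alpha',\beta'$) and the twisted cocycle $V_t=J_{\Omega_1}U_tJ_{\Omega_1}\in\m'$ giving $(\alpha^U)'=(\alpha')^V$. Your verification of the small points (that $u'$ commutes with $\alpha_t(\m)$, that $J_{\Omega_1}W^*y'WJ_{\Omega_1}\in\m$, and the strong continuity and cocycle identity for $V_t$) is accurate, so no gap remains.
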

 
%We denote $\alpha^{J_\Omega}$ by $\alpha'$ and do not specify $\Omega$ since it depends only on the standard form rather than $\Omega$. 
A super-product system is associated as an invariant to an \en-semigroup on a factor through the following theorem (see \cite[Theorem 3.4]{MS2}).

\begin{thm}\label{sps theorem}
  Let $\m\subseteq B(H)$ be a factor in standard form and $\alpha$ an \en-semigroup on $\m$. For each $t>0$, let
 $$H^{\alpha}_t=\{ X\in B(H):\forall_{m\in\m, m'\in\m'}~\alpha_t(m)X=Xm, ~\alpha'_t(m')X=Xm' \}. $$
Then $H^{\alpha}=\{H^{\alpha}_t: t>0\}$ is a super-product system with respect to the family of isometries $U_{s,t}\left(X\otimes Y \right)=XY$.

Let $\alpha$ and $\beta$ be \en-semigroups acting on respective factors $\m$ and $\n$ in standard form. If $\alpha$ and $\beta$ are cocycle conjugate then $H^{\alpha}$ and $H^{\beta}$ are isomorphic.
 \end{thm}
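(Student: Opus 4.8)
The plan is to verify the axioms of Definition~\ref{superproductsystem} for $(H^\alpha_t,U_{s,t})$ directly, and then to build the isomorphism of Definition~\ref{spdct-iso} out of a cocycle conjugacy. For the super-product structure, begin with the inner product: given $X,Y\in H^\alpha_t$, taking adjoints in the two defining relations shows that $Y^*X$ commutes with $\m$ and with $\m'$, so $Y^*X\in\m\cap\m'=\C 1$ since $\m$ is a factor; thus $\langle X,Y\rangle 1:=Y^*X$ is a genuine inner product whose induced norm is the operator norm. Because the defining relations are norm-closed, $H^\alpha_t$ is a complete subspace of $B(H)$, and, with the cyclic separating vector $\Omega$ normalised, the map $X\mapsto X\Omega$ is an isometry of $H^\alpha_t$ into the separable Hilbert space $H$; hence each $H^\alpha_t$ is a separable Hilbert space.

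Next, $U_{s,t}(X\otimes Y)=XY$ takes values in $H^\alpha_{s+t}$ by the semigroup laws $\alpha_{s+t}=\alpha_s\circ\alpha_t$ (for the $\m$-relation) and $\alpha'_{s+t}=\alpha'_s\circ\alpha'_t$ (for the $\m'$-relation, which uses $J_\Omega^2=1$); it is isometric since, using $(X')^*X=\langle X,X'\rangle 1$, one has $(X'Y')^*(XY)=\langle X,X'\rangle(Y')^*Y=\langle X,X'\rangle\langle Y,Y'\rangle 1$, so it extends to an isometry $H^\alpha_s\otimes H^\alpha_t\to H^\alpha_{s+t}$, and associativity of the $U_{s,t}$ is just associativity of operator multiplication. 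The remaining axiom, a standard Borel structure on $\H^\alpha=\{(t,X):X\in H^\alpha_t\}$, I would install by transporting that of $(0,\infty)\times H$ along $(t,X)\mapsto(t,X\Omega)$, checking that the image is Borel and that $p$, the inner products and the tensor maps become Borel; for this one uses the weak-operator continuity of $t\mapsto\alpha_t$ and $t\mapsto\alpha'_t$ to express the fibrewise relations as countably many Borel conditions, just as in Arveson's treatment of the $B(H)$ case \cite[Remark 3.1.2]{Arv}. This is routine but fiddly.

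For the second assertion, factor a cocycle conjugacy into a genuine conjugacy followed by a cocycle perturbation. If $\alpha$ and $\beta$ are conjugate, Proposition~\ref{doubly intertwining unitary shizzle} yields a unitary $W$ with $W\m W^*=\n$ intertwining $\alpha$ with $\beta$ and $\alpha'$ with $\beta'$; then $V_t(X):=WXW^*$ is a bijective isometry $H^\alpha_t\to H^\beta_t$ with $V_{s+t}(XY)=V_s(X)V_t(Y)$, and $(t,X)\mapsto(t,WXW^*)$ is a Borel isomorphism, so $H^\alpha\cong H^\beta$. It then remains to compare $H^\alpha$ with $H^{\alpha^U}$ for a cocycle $\{U_t\}\subseteq\m$ of $\alpha$. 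Set $U'_t:=J_\Omega U_tJ_\Omega\in\m'$; a short computation shows $\{U'_t\}$ is a cocycle for $\alpha'$ and that the complementary semigroup $(\alpha^U)'$ is $(\alpha')^{U'}$. Define $V_t:H^\alpha_t\to H^{\alpha^U}_t$ by $V_t(X):=U_tU'_tX$. Since $U_t\in\m$ commutes with the range of $\alpha'_t$ and $U'_t\in\m'$ commutes with the range of $\alpha_t$, one checks that $U_t^*V_t(X)=U'_tX$ satisfies the $\m$-relation for $\alpha$ while $(U'_t)^*V_t(X)=U_tX$ satisfies the $\m'$-relation, so $V_t(X)\in H^{\alpha^U}_t$; moreover $V_t$ is a bijective isometry with inverse $Z\mapsto(U_tU'_t)^*Z$, and compatibility with the product maps reduces, via the cocycle identities $U_{s+t}=U_s\alpha_s(U_t)$ and $U'_{s+t}=U'_s\alpha'_s(U'_t)$ together with $\alpha_s(U_t)X=XU_t$ and $\alpha'_s(U'_t)X=XU'_t$ for $X\in H^\alpha_s$, to the identity $U_{s+t}U'_{s+t}XY=(U_sU'_sX)(U_tU'_tY)$. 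Composing the two steps gives $H^\beta\cong H^{\alpha^U}\cong H^\alpha$.

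I expect the main obstacle to be this cocycle-perturbation step --- in particular, correctly identifying the complementary semigroup of $\alpha^U$ as $(\alpha')^{J_\Omega UJ_\Omega}$ and hitting on the intertwiner $V_t(X)=U_tU'_tX$, which must simultaneously repair the $\m$- and the $\m'$-defining relations; this is precisely where the passage from type~I to arbitrary factors is felt. The Borel-structure check, though unavoidable, carries no ideas beyond Arveson's.
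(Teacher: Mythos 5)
Your proposal is correct and takes essentially the same route as the proof this paper relies on (the theorem is only quoted here from \cite[Theorem 3.4]{MS2}): factoriality giving $Y^*X\in\C 1$ as the inner product, the semigroup computation showing $XY\in H^\alpha_{s+t}$, Proposition \ref{doubly intertwining unitary shizzle} for the conjugacy step, and for the cocycle perturbation the intertwiner $X\mapsto U_tJ_\Omega U_tJ_\Omega X$ together with $(\alpha^U)'=(\alpha')^{J_\Omega U J_\Omega}$ is exactly the standard argument. The only deferred point, the Borel-structure check via $X\mapsto X\Omega$, is indeed routine and handled as in Arveson.
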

 
In \cite{MS1}, super-product systems of a family of \en-semigroups on type II$_1$ factors, called as Clifford flows and even Clifford flows, were computed. Though the Clifford flows and even Clifford were shown to be non-cocycle-conjugate for different ranks, the proof was indirect, using boundary representations and a theory of $C^*-$semiflows. The associated super-product systems were not shown to be non-isomorphic when their ranks are different. In this paper we show those super-product systems to be non-isomorphic for different ranks (see Section \ref{2-addits computation}), which gives a direct proof regarding the non-cocycle-conjugacy of Clifford flows and even Clifford flows. 
%There has been relatively little progress regarding the study of \en-semigroups on type II$_1$ factors, after it was initiated in the 1988 paper \cite{Pow Pow}. In \cite{alevras} Alexis Alevras introduced an index using Powers' boundary representation (\cite{Pow Pow}), and computed the index for several important cases. Still, this did not classify even the simplest examples of \en-semigroups on the hyperfinite II$_1$ factor called Clifford flows, since it is yet not proved that the Powers-Alevras index is a cocycle conjugacy invariant. The problem of non-cocycle-conjugacy for Clifford flows is resolved in \cite{MS1}.

In \cite{MS2}, using CCR representations, uncountable families consisting of  mutually non-cocycle-conjugate \en-semigroups were constructed on every type III$_\lambda$ factors, for $\lambda \in (0,1]$. These are called as CCR flows on type III factors. Similar to CCR representations, it is also possible to produce families of \en-semigroups using CAR representations. But useful invariants are not yet found to distinguish them up to cocycle conjugacy. This difficulty is mainly because the gauge group turns out to be trivial for \en-semigroups constructed through CAR representations, unlike the CCR representations. 

In this paper we  distinguish CAR flows on type III factors when their ranks are different, 
by showing the associated super-product systems are non-isomorphic (see Section \ref{2-addits computation}).  It remains open to classify  among CAR flows with same rank, similar to the classification done for CCR flows in section 8 in \cite{MS2}. 

\section{Preliminaries}\label{pre}

$\N$ denotes the set of natural numbers, and we set $\N_0=\N\cup \{0\},$ $\overline{\N}=\N\cup \{\infty\}$. We will only deal with complex Hilbert spaces in this paper. The inner product is always conjugate linear in the first variable and linear in the second variable. $L^2(S,\k)$ denote the square integrable functions from $S$ taking values in a complex separable Hilbert space  $\k$. $L^2_{\loc}(S,\k)$ denotes the functions which are square integrable on compact subsets.  Throughout this paper we denote by $(T_t)_{t\geq 0}$ the right shift semigroup on $L^2((0,\infty),\k)$ 
defined by 
\begin{eqnarray*}(T_tf)(s) & = & 0, \quad s<t,\\
& = & f(s-t), \quad s \geq t,
\end{eqnarray*} 
for $f \in L^2((0,\infty),\k)$.
 
We  will be dealing with antisymmetric tensors in all the examples considered in this paper, since they are the ones related to  our examples of \en-semigroups. But examples of super-product systems in this paper and the facts proven about them can be extended to symmetric case as well, by making analogous changes. 

Let $K$ be a complex separable Hilbert space.  
Let $\Gamma(K):=\bigoplus_{n=0}^\infty{(K)^{\wedge n}}$ be the antisymmetric Fock space over $K$, 
%i.e. the sum of antisymmetric tensor powers of $K$, 
where $K^{\wedge 0}$ is $\C$.
For any $f\in K$ the Fermionic creation operator $a^*(f)$ is the bounded operator defined by the linear extension of
$$a^*(f)\xi=\left\{\begin{array}{ll} f & \hbox{if}~\xi=\Omega, \\ f\wedge \xi &  \hbox{if}~ \xi\perp\Omega, \end{array}\right.$$ where $\Omega$ is the vacuum vector. 
%($1$ in the 0-particle space $\C$), and $f\wedge\xi$ is the antisymmetric tensor product. 
The annihilation operator is defined by the adjoint $a(f)=a^*(f)^*$. The creation and annihilation operators satisfy the well-known anti-commutation relations (see  Equations (\ref{a-relations}) in Section \ref{CAR}) and generate $B(\Gamma(K))$. For an isometry $U:K_1 \mapsto K_2$, the second quantization $\Gamma(U): \Gamma(K_1)\mapsto \Gamma(K_2)$ is the isometry defined by the extension of  $$\Gamma(U)(x_1\wedge x_2\wedge\cdots \wedge x_n) = Ux_1\wedge Ux_2\wedge\cdots \wedge Ux_n.$$ 
For $s \in (0,\infty), t \in (0,\infty]$, define $$U_{s,t}:\Gamma(L^2((0,s), \k) \otimes \Gamma(L^2((0,t), \k) \mapsto \Gamma(L^2((0,s+t), \k)$$ as the unitary the extension of 
$$(\xi_1 \wedge\xi_2 \wedge \cdots \wedge \xi_{m}) \otimes (\eta_1 \wedge\eta_2  \wedge \cdots \wedge \eta_{n}) \mapsto T_s \eta_1 \wedge T_s \eta_2\wedge  \cdots \wedge T_s \eta_{n} \wedge \xi_1 \wedge\xi_2  \wedge \cdots \wedge \xi_{m}.$$ To avoid messy notations, we  will continue to use $U_{s,t}$ for isometric restrictions of $U_{s,t}$ to subspaces as well and for different $\k$.

\begin{ex}\label{CARflow} The \textit{CAR flow} of index $n=\dim \k\in \overline{\N}$  is the \en-semigroup $\theta^n=\{\theta^n_t:t\geq 0\}$ acting on $B(\Gamma(L^2((0,\infty),\k)))$ defined by the extension of $$\theta^n_t(a(f)):=a(T_tf), ~~ f \in L^2((0,\infty),\k).$$ 
\end{ex}

\begin{ex}\label{pdctI}  
%Let $n=\dim \k\in \overline{\N}$. The following facts are well known (see \cite{Arv}). The Arveson product system associated with CCR flow of index $n$ is described by the \textit{exponential product system} $(H^{s,\k}(t), U_{s,t})$ where $$H^{s, n}(t) =\Gamma_s(L^2((0,t), \k); ~U_{s,t}=\left(1_{\Gamma_s(L^2((0,s), \k))}\otimes \Gamma(T_s|_{L^2((0,t), \k)})\right).$$ 
The Arveson product system associated with CAR flow of index $\dim(\k)$ is described by the \textit{CAR product system} 
$(H^{\k}(t), U_{s,t})$, where $H^{\k}(t) =\Gamma(L^2((0,t), \k)$. 
 (see \cite[Remark 5.13]{MS1}.)
\end{ex}

The following example gives several families of proper super-product systems. The basic examples  we are concerned with are Clifford super-product systems and CAR super-product systems. 

 \begin{ex}\label{subsemigroup example}
For an additive sub-semigroup 
%(we do not assume $0\in G$),
%\footnote{We do not assume $0\in G$, i.e. $G$ need not be a monoid.}  
$G$ of $\N_0$, define
%   $$H^{s,\k}_G(t):=\bigoplus_{n\in G} L^2([0,t];\k)^{\vee n} \subseteq  H^{s,\k} (t);$$
$$ H^{\k}_G(t):=\bigoplus_{n\in G} L^2([0,t], \k)^{\wedge n}  \subseteq  H^{\k} (t);$$ 
%with the isometries given by the restriction of the unitaries  in Example \ref{pdctI} (which we continue to denote by $U_{s,t}$). 
Then $\left(H^{\k}_G(t),U_{s,t}\right)$ forms a  super-product system.
%When $G=2 \N_0$, we usually call them as the even symmetric super-product systems and even antisymmetric super-product system by $H_t^{s,e}$ and $H_t^{a,e}$ respectively.

When $G=2 \N_0$, we call them as the \textit{Clifford super-product systems}, as they are ones associated with Clifford flows (also with the even Clifford flows) on hyperfinite II$_1$ factor (see \cite[Corollary 8.13 ]{MS1}).
 \end{ex}

 \begin{ex}\label{subsemigroup example2} We  can take tensor products of super-product systems, where both the Hilbert spaces and  isometries are tensored accordingly, to produce new super-product systems.  
 %The following subsystems  of tensor products of CAR product systems also provides examples of super-product systems.
 %For an additive sub-semigroup  $G$ of $\N_0$, 
 Define
%$$ E^{s,\k}_{G, k}(t)  :=\bigoplus_{n_1+n_2+\cdots + n_k \in G} L^2([0,t];\k)^{\vee n_1}\otimes L^2([0,t];\k)^{\vee n_1}\otimes \cdots \otimes L^2([0,t];\k)^{\vee n_1};$$
$$ E^{\k}_{2\N_0}(t)  :=\bigoplus_{n_1+n_2 \in 2 \N_0} L^2([0,t],\k)^{\wedge n_1}\otimes L^2([0,t],\k)^{\wedge n_2} \subseteq  H^\k(t)  \otimes   H^\k(t).$$  Denote $U^2{s,t} = U_{s,t} \otimes U_{s,t}$ (and also its restrictions).
%\otimes H^{s,\k} (t)\otimes \cdots \otimes H^{s,\k} (t))$ and $(H^{a,\k} (t)\otimes H^{a,\k} (t)\otimes \cdots \otimes H^{a,\k} (t))$ respectively, 
 Then $(E^{\k}_{2 \N_0}(t),U^2_{s,t})$ forms a  super-product system.

By restricting the natural isomorphism between the product systems $ \left(H^\k(t) \otimes H^\k(t),  U^2{s,t}\right)$ and $\left(H^{\k \oplus \k}, U_{s,t}\right)$, it is easy to see that  $$(E^{\k}_{2 \N_0}(t),U^2_{s,t})   \cong (H^{\k\oplus \k }_{2 \N_0}(t),U_{s,t}).$$ 
 \end{ex}

\section{Types of super-product systems}\label{types}

Imitating the definition of types for product systems, super-product systems can also be broadly divided into three types. %Approaching with addits is relatively simpler; for instance it provides  a much simpler proof of the famous Arveson's result that any completely spatial \en-semigroup of index $n$ is cocycle conjugate to the CCR flow of index $n$. 
%and this gives a direct proof that CCR flows and CAR flows are cocycle conjugate if and only if their index is same.  We provide the proof that they are cocycle conjugate to CAR flows, and the same idea works if CAR is replaced by CCR. After describing that, 
We further divide type II super-product systems based on the existence of 2-addits, which can be further refined by considering the $n-$th cohomology.  

\begin{defn} A unit for a super-product system $(H_t, U_{s,t})$ is a measurable section $u=\{u_t :u_t \in H_t\}_{t\geq 0}$ satisfying $U_{s,t}(u_s\otimes u_t)=u_{s+t}~\forall ~ s,t \in (0,\infty).$ \end{defn}

A super-product system is called spatial if it admits a unit. In a spatial super-product system we usually fix a special unit called as the canonical unit, denoted by $\Omega=\{\Omega_t\in H_t\}$. 
%We set $H_0=\C\Omega_0$.
%Note that the canoncial embedding of $\m$ into $H$ defines an injective map $G(\alpha)\hookrightarrow \mathfrak{C}(\alpha)$. 
%Let  $(H_t, U_{s,t})$ be a spatial super-product system with the canonical unit $\{\Omega_t\}$. 
%A unital unit is a unit $ (u_t)_{t\geq0}$ satisfying $\norm{u_t}=1$. 
An exponential unit is a unit $u$ satisfying $\ip{u_t,\Omega_t}=1$. 
We denote 
%the set of unital units by $\mathfrak{U}(H)$ and 
the collection of all exponential units by $\mathfrak{U}_\Omega(H)$. 
%Given any unit $(u_t)_{t\geq0}$ there exists scalars $\lambda, \mu$ such that the unit $\{e^{-\lambda t}u_t\}$ is unital and $\{e^{-\mu t}u_t\}$ is exponential. 
For every unit $u$ there exists a scalar $\lambda \in \C$ such that  $\{e^{-\lambda t}u_t \}\in \mathfrak{U}_\Omega(H)$.
%There exists a bijection $\mathfrak{U}_\Omega(H)\to\mathfrak{U}(H)$. 

\begin{defn} An addit for a spatial super-product system $(H_t, U_{s,t})$, with respect to a canonical unit $\Omega$, is a measurable family of vectors $b=\{b_t: b_t \in H_t\}_{t\geq 0}$ satisfying
%\begin{itemize}
%item[(i)] $b_t\in H_t$ for all $t\geq 0$,
%\item[(ii)] 
$$U_{s,t}(b_s \otimes \Omega_t) +U_{s,t}(\Omega_s \otimes b_t) = b_{s+t} ~~\forall s,t\geq0.$$
%\end{itemize}
We say an addit  is centred if $\ip{\Omega_t,b_t}=0$ for all $t\geq0$. 
\end{defn}

Denote the set of all centered addits by $\mathfrak{A}_\Omega(H)$. Every addit $b$ can be written as $b_t=c_t+\lambda t \Omega_t$ such that $c$ is a centered addit and $\lambda \in \C$. 
%\begin{lem}\label{additlemma} Let $b$ and $c$ be centered addits. Then
%\begin{equation} \ip{b_t,c_t}=t\ip{b_1,c_1}. \label{additidentity}\end{equation}
%\end{lem}
The following theorem is proved in \cite[Theorem 5.11]{MS1} (see also \cite[Remark 5.12]{MS1}). The second part follows from the construction of  $\Exp_\Omega$ and $\Log_\Omega$.

\begin{thm}\label{explog} Let $(H_t, U_{s,t})$ be a spatial super-product system with canonical unit $\{\Omega_t\}$. There exists a bijection $\Exp_\Omega:  \mathfrak{A}_\Omega(H)  \mapsto \mathfrak{U}_\Omega(H)$ with inverse $\Log_\Omega$ satisfying $$\ip{\Exp_\Omega(b)_t, \Exp_\Omega(b')_t}=e^{\ip{b_1,b'_1}t}~\forall ~b,b' \in \mathfrak{A}_\Omega(H).$$

Moreover $\Exp_\Omega ( \mathfrak{A}_\Omega(H))$ is contained in the product system generated by the centered addits with respect to $\Omega$, and $\Log_\Omega(\mathfrak{U}_\Omega(H))$ is contained in the product system
generated by the units.
\end{thm}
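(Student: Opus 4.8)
The plan is to realise $\Exp_\Omega$ as a multiplicative (Riemann) integral and $\Log_\Omega$ as its additive counterpart; throughout, take $\Omega$ normalised so that $\norm{\Omega_t}=1$. Fix $t>0$; for a partition $\mathcal{P}=\{0=s_0<\dots<s_n=t\}$ with increments $\Delta_k=s_k-s_{k-1}$, write $U^{(\mathcal{P})}\colon H_{\Delta_1}\otimes\cdots\otimes H_{\Delta_n}\to H_t$ for the iterated isometry, well defined by associativity. For a centred addit $b$ I would put
\[
 e_{\mathcal{P}}(b):=U^{(\mathcal{P})}\Bigl(\bigotimes_{k=1}^{n}\bigl(\Omega_{\Delta_k}+b_{\Delta_k}\bigr)\Bigr)
\]
and define $\Exp_\Omega(b)_t:=\lim_{\mathcal{P}}e_{\mathcal{P}}(b)$, the limit taken along refinements. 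Before anything else I would record two elementary facts, each following from additivity in $t$ together with measurability via Cauchy's functional equation (using that $U_{s,t}$ is isometric, so inner products of $U$-images factorise): for centred addits $b,b'$ one has $\ip{b_t,b'_t}=t\,\ip{b_1,b'_1}$, hence $\norm{b_t}^2=t\,\norm{b_1}^2$; and for any unit $u$ the function $t\mapsto\norm{u_t}$ is multiplicative, so $\norm{u_t}=e^{\mu t}$ for some $\mu\ge0$.

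The engine of the argument is the iterated addit identity
\[
 b_{\Delta}=\sum_{j=1}^{m}U^{(m)}\bigl(\Omega_{\delta_1}\otimes\cdots\otimes b_{\delta_j}\otimes\cdots\otimes\Omega_{\delta_m}\bigr),\qquad\delta_1+\cdots+\delta_m=\Delta,
\]
proved by induction from the defining relation of an addit. Expanding $\bigotimes_k(\Omega_{\Delta_k}+b_{\Delta_k})$ multilinearly and substituting this identity shows that refining a partition leaves the ``constant'' term of $e_{\mathcal{P}}$ unchanged, reproduces $b$ exactly as the ``linear'' term, and leaves a remainder of order at least $2$ in $b$ that is orthogonal both to $\Omega_\Delta$ and to $b_\Delta$ (centredness enters here). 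Consequently, whenever $\mathcal{Q}$ refines $\mathcal{P}$,
\[
 \ip{e_{\mathcal{P}}(b),e_{\mathcal{Q}}(b')}=\prod_{k}\bigl(1+\ip{b_{\Delta_k},b'_{\Delta_k}}\bigr)=\prod_{k}\bigl(1+\Delta_k\,\ip{b_1,b'_1}\bigr),
\]
which tends to $e^{t\,\ip{b_1,b'_1}}$ as the mesh tends to $0$. With $b=b'$ this gives $e_{\mathcal{Q}}(b)-e_{\mathcal{P}}(b)\perp e_{\mathcal{P}}(b)$ and $\norm{e_{\mathcal{Q}}(b)}^2-\norm{e_{\mathcal{P}}(b)}^2\le e^{t\norm{b_1}^2}-\norm{e_{\mathcal{P}}(b)}^2\to 0$ as $\mathrm{mesh}(\mathcal{P})\to 0$, so $\{e_{\mathcal{P}}(b)\}$ is Cauchy and $\Exp_\Omega(b)_t$ is well defined; the displayed inner-product formula follows at once. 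Concatenating a partition of $[0,s]$ with one of $[s,s+t]$ and using associativity and continuity of $U_{s,t}$ yields $U_{s,t}(\Exp_\Omega(b)_s\otimes\Exp_\Omega(b)_t)=\Exp_\Omega(b)_{s+t}$, so $\Exp_\Omega(b)$ is a unit; it is exponential because $\Omega=\Exp_\Omega(0)$ and the inner-product formula gives $\ip{\Exp_\Omega(b)_t,\Omega_t}=e^{t\,\ip{b_1,0}}=1$. Measurability of $t\mapsto\Exp_\Omega(b)_t$ follows by approximating with dyadic Riemann products, each a measurable section of $\H$; and since every $e_{\mathcal{P}}(b)$ lies in the sub-super-product-system generated by $\Omega$ and $b$, the ``moreover'' for $\Exp_\Omega$ is automatic.

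For $\Log_\Omega$ I would run the same machine additively. For an exponential unit $u$ put $c_\delta:=u_\delta-\Omega_\delta$, so $\ip{\Omega_\delta,c_\delta}=0$, and
\[
 S_{\mathcal{P}}(u):=\sum_{k}U^{(\mathcal{P})}\bigl(\Omega_{\Delta_1}\otimes\cdots\otimes c_{\Delta_k}\otimes\cdots\otimes\Omega_{\Delta_n}\bigr).
\]
Writing each $u_{\Delta_k}$ as an iterated product of the $u_{\delta_j}$ (possible since $u$ is a unit) and expanding as before, one finds that under refinement $S_{\mathcal{P}}(u)-S_{\mathcal{Q}}(u)$ is the sum of the terms of order at least $2$, which is orthogonal to $S_{\mathcal{Q}}(u)$; hence $\norm{S_{\mathcal{P}}(u)}^2-\norm{S_{\mathcal{Q}}(u)}^2\to 0$ with the mesh, quantitatively via the elementary bound $0\le\prod_j(1+a_j)-1-\sum_j a_j\le\tfrac12\bigl(\sum_j a_j\bigr)^2e^{\sum_j a_j}$ with $a_j=e^{\mu\delta_j}-1$. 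So $\{S_{\mathcal{P}}(u)\}$ is Cauchy and $\Log_\Omega(u)_t:=\lim_{\mathcal{P}}S_{\mathcal{P}}(u)$ exists; it is a centred addit (centredness from $\ip{\Omega_\delta,c_\delta}=0$, the addit relation from concatenating partitions). Then $\Log_\Omega\circ\Exp_\Omega=\id$: for $u=\Exp_\Omega(b)$ the relations already proved give $u_\delta-\Omega_\delta=b_\delta+r_\delta$ with $r_\delta\perp b_\delta,\Omega_\delta$ and $\norm{r_\delta}^2=e^{\delta\norm{b_1}^2}-1-\delta\norm{b_1}^2=O(\delta^2)$, so $S_{\mathcal{P}}(u)-b_t$ has squared norm $\sum_k\norm{r_{\Delta_k}}^2\to 0$, i.e.\ $\Log_\Omega(u)=b$. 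For $\Exp_\Omega\circ\Log_\Omega=\id$ I would telescope $u_t-e_{\mathcal{P}}(\Log_\Omega u)=\sum_k v_k$ across the breakpoints of $\mathcal{P}$; the $v_k$ turn out mutually orthogonal because $\ip{\Omega_{\Delta_l}+\Log_\Omega(u)_{\Delta_l},\;c_{\Delta_l}-\Log_\Omega(u)_{\Delta_l}}=0$ (using $\ip{\Log_\Omega(u)_{\Delta_l},c_{\Delta_l}}=\norm{\Log_\Omega(u)_{\Delta_l}}^2$), and $\sum_k\norm{v_k}^2\le e^{2\mu t}\sum_k\bigl(e^{\mu\Delta_k}-1-\mu\Delta_k\bigr)\to 0$; alternatively one checks $\Log_\Omega$ is injective from the equality case of Cauchy--Schwarz, which together with $\Log_\Omega\circ\Exp_\Omega=\id$ forces the other composite too. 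Finally every $S_{\mathcal{P}}(u)$ lies in the sub-system generated by $\Omega$ and $u$, giving the stated containment for $\Log_\Omega$.

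The main obstacle I anticipate is bookkeeping rather than analysis: verifying at every refinement step that the higher-order remainder really is orthogonal to both the linear part and all the earlier partial products — it is this orthogonality that converts the monotone behaviour of the norms under refinement into a Cauchy estimate — and, more tediously, checking that all of these limits respect the standard Borel structure on $\H$. The only genuinely analytic ingredient is the elementary inequality above, which forces every error sum to be $O(\mathrm{mesh})$.
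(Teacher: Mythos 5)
The paper contains no in-house proof of this theorem---it is quoted from \cite[Theorem 5.11]{MS1}---and the construction used there is exactly the one you propose: $\Exp_\Omega(b)_t$ as the refinement limit of the partition products $U^{(\mathcal{P})}\bigl(\bigotimes_k(\Omega_{\Delta_k}+b_{\Delta_k})\bigr)$, and $\Log_\Omega(u)_t$ as the limit of the sums with a single slot $u_{\Delta_k}-\Omega_{\Delta_k}$, with convergence coming from the orthogonality of the higher-order terms and the resulting monotone, bounded behaviour of the norms, and with the ``moreover'' containments read off from the construction. Your argument is essentially correct as written: the points you leave implicit (the identity $\norm{\Log_\Omega(u)_\delta}^2=\mu\delta$ hidden in your final estimate, which follows from $\norm{S_{\mathcal{Q}}(u)}^2=\sum_j(e^{\mu\delta_j}-1)\to\mu\delta$, and the measurability of the limiting sections) are routine within your own framework, so your proposal matches the cited proof in both statement and method.
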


%Product systems are divided into three types based on the abundance of units.  Thanks to the above theorem we can rephrase those definitions and also the associated covariance functions in terms of addits.
Let $\mathfrak{S}$  be a collection of measurable sections of a super-product system $(H_t, U_{s,t})$. For any fixed $t \in (0,\infty)$, we denote by $H_t^\mathfrak{S}$ the closure of the linear span of the set $\{s^1_{t_1}s^2_{t_2}  
\cdots s^n_{t_n}: \sum_{i=1}^n t_i = t, s^i \in \mathfrak{S}\},$ and we continue to denote the restrictions of $U_{s,t}$ to $H_t^\mathfrak{S}$ by $U_{s,t}$. (Here the product is the image of
$s^1_{t_1}\otimes s^2_{t_2}  \cdots \otimes s^n_{t_n}$ in $H_t$, under the canonical unitary given by 
the associativity axiom.) Then $(H_t^\mathfrak{S}, U_{s,t})$ is the super-product system  generated by the $\mathfrak{S}$.

A super-product system is said to be of type I if units exist and the collection of units generate the super-product system. Thanks to Theorem \ref{explog}, a spatial super-product system with canonical units $\Omega$ is type I if and only if it is generated by $\{\Omega\}\cup \mathfrak{A}_\Omega$. 
%(In that case it is indeed a product system.) 
It is easily verified that type I super-product systems are indeed  product systems. 
%quaote exact theorem in Arveson.
%these are completely enumerated by taking $G=\N_0$ and varying $\k$ in Example \ref{subsemigroup example}.
%Arveson defined the index of a spatial product system as the dimension of a Hilbert space constructed from the set of units, through the conditionally positive definite covariance function $c(u,v)$ defined by $e^{tc(u,v)}=\ip{u_t, v_t}.$ 
It also follows that the index, defined by Arveson through units, is nothing but the dimension of $\mathfrak{A}_\Omega$, which is a Hilbert space with respect to $\ip{a,b} =\ip{a_1, b_1}$, satisfying $\ip{a_t, b_t}=t\ip{a,b}$ (see  \cite[Lemma 4.8]{MS1}). Further the dimension of $\mathfrak{A}_\Omega$ does not depend on a particular unit $\Omega$ and this index is an invariant. 
%Approaching with addits provides  a much simpler proof of the famous Arveson's result that any completely spatial \en-semigroup of index $n$ is cocycle conjugate to the CCR flow of index $n$.

The centered addits of CAR product systems are given by one particle vectors $\{\xi \otimes 1_{[0,t]}: \xi \in \k\}$, which generate the product system (\cite[Lemma 7.1]{MS1}). Hence they are of type I and have index $\dim(\k)$.  This shows that $(H^{\k}(t), U_{s,t})$ are non-isomorphic if $\dim(\k)$ varies. Since Arveson systems form a complete invariant  for \en-semigroups on $B(H)$ with respect  to cocycle conjugacy, the following theorem, which was originally proved by Arveson,  implies that any \en-semigroup with a type I product system of index $n$ is cocycle conjugate to the CAR flow of index $n$. Approaching with addits provides  a much simpler proof.
%In particular the CCR flow of index $n$ and CAR flow of index $m$ are cocycle conjugate if and only if $m=n$.

\begin{thm}
For $n \in \overline{\N}$ any type I product system of index $n$ is isomorphic to $(H^{\k}(t), U_{s,t})$ for some  $\k$ satisfying $\dim(k)=n$. 
\end{thm}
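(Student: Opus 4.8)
The plan is to reconstruct the product system from its centered addits, following the structure already used for the CAR example. Let $(H_t, U_{s,t})$ be a type I product system of index $n$, with canonical unit $\{\Omega_t\}$. By Theorem \ref{explog} and the discussion of type I, $(H_t, U_{s,t})$ is generated by $\{\Omega\}\cup\mathfrak{A}_\Omega$, and $\mathfrak{A}_\Omega$ is a Hilbert space of dimension $n$ under $\ip{a,b}=\ip{a_1,b_1}$, with $\ip{a_t,b_t}=t\ip{a,b}$. First I would fix a separable Hilbert space $\k$ with $\dim\k=n$ and choose a unitary $\k\to\mathfrak{A}_\Omega$; precomposing with $\xi\mapsto\xi\otimes 1_{[0,t]}\in H^{\k}(t)$, this identifies $\k$ with the centered addits of the CAR product system $(H^{\k}(t),U_{s,t})$ as Hilbert spaces. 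So it suffices to produce an isomorphism of product systems that carries one family of centered addits (and the canonical unit) onto the other.

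Next I would build the candidate unitaries $V_t: H^{\k}(t)\to H_t$. On the CAR side, $H^{\k}(t)=\Gamma(L^2((0,t),\k))$ is spanned by products of one-particle addit vectors together with the vacuum; concretely, for $\xi^1,\dots,\xi^m\in\k$ and a partition $0=s_0<s_1<\dots<s_m=t$, the vectors $(\xi^1\otimes 1_{[s_0,s_1]})(\xi^2\otimes 1_{[s_1,s_2]})\cdots(\xi^m\otimes 1_{[s_{m-1},s_m]})$ (products taken via $U_{s,t}$) span a dense subspace, and one computes their inner products purely in terms of $\ip{\xi^i,\xi^j}$ and the interval data. I would define $V_t$ on the abstract side by sending such a product of CAR addits to the corresponding product of addits $a^{\xi^1}_{s_1}\,a^{\xi^2}_{s_2-s_1}\cdots$ in $H_t$, where $a^\xi\in\mathfrak{A}_\Omega$ is the addit corresponding to $\xi$, with $\Omega_t$ mapped to $\Omega_t$. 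The key point is that Theorem \ref{explog} forces the inner products of these products in $H_t$ to be given by exactly the same universal formula — this is because $\Exp_\Omega$ has the prescribed inner-product form $e^{\ip{b_1,b'_1}t}$, and expanding $\Exp_\Omega$ in "chronological" blocks over a common refinement of two partitions yields, by a standard polarization/generating-function argument, the Fock-space formula. Hence $V_t$ is a well-defined isometry onto a dense subspace, so a unitary, and intertwining with $U_{s,t}$ holds on the spanning vectors by construction and extends by continuity. Measurability of $V=\{V_t\}$ follows since it is determined by its action on a countable measurable generating family.

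The main obstacle I expect is the careful bookkeeping in the previous paragraph: verifying that the inner product of two arbitrary finite products of centered addits in $H_t$ depends only on the Gram matrix of the addit "labels" and the time intervals, with no residual dependence on the particular product system. This is where one genuinely uses that the system is type I — so that $\Exp_\Omega(\mathfrak A_\Omega)$ and the addits together are dense — and the exponential identity of Theorem \ref{explog}. One clean way to organize it: fix finitely many addits $b^1,\dots,b^k\in\mathfrak A_\Omega$ and scalars, form $b=\sum z_i b^i$, look at $\Exp_\Omega(b)_t$, use the factorization $\Exp_\Omega(b)_{s+t}=U_{s,t}(\Exp_\Omega(b)_s\otimes\Exp_\Omega(b)_t)$ together with $\ip{\Exp_\Omega(b)_t,\Exp_\Omega(b')_t}=e^{\ip{b_1,b'_1}t}$ to read off, via differentiation in the $z_i$ at $0$, the inner products of the multilinear (wedge) components; these coincide term-by-term with the antisymmetric Fock inner products on $\Gamma(L^2((0,t),\k))$. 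Once this combinatorial identity is in hand, well-definedness, isometry, surjectivity, intertwining and measurability of $V$ are all routine, and the theorem follows. Finally, since Arveson's product system is a complete cocycle-conjugacy invariant for \en-semigroups on $B(H)$, and $(H^{\k}(t),U_{s,t})$ is the product system of the CAR flow of index $n$, any \en-semigroup with type I product system of index $n$ is cocycle conjugate to that CAR flow.
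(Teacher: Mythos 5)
Your proposal is essentially the paper's proof: the paper fixes an orthonormal basis $\{a^i\}$ of centered addits and a unit, and defines the unitary (in the opposite direction, $H_t\to\Gamma(L^2(0,t)\otimes\k)$) by sending products $a^{i_1}_{t_1}\otimes\cdots\otimes a^{i_k}_{t_k}$ over consecutive intervals to the wedge products $e^{i_1}_{0,t_1}\wedge\cdots$ of indicator one-particle vectors, then checks inner products, totality and intertwining exactly as you outline. The only difference is that your inner-product verification detours through $\Exp_\Omega$ and a generating-function/polarization argument, whereas the paper (implicitly) gets it directly from $\ip{a_s,b_s}=s\ip{a,b}$, centredness, and factorization over a common refinement of the two partitions, which is simpler and suffices.
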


\begin{proof} Let $(H_t, U_{s,t})$ be a type I product system of index $n\in \overline{\N}$. Fix a unit $a^0$ and centered addits $\mathfrak{A}_{a^0}$ with orthonormal basis $\{a^i: i \in I\}$. Let $\k$ be a separable Hilbert space of dimension $n$ with orthonormal basis $\{e^i: i \in I\}$.
%, and denote vacuum vector $\varepsilon(0)$ of $\Gamma_s(L^2((0,t)\k))$ by $e^0$. 
Set $e^i_{s,t}= 1_{(s,t)}\otimes e^i$ for $s,t \in (0,\infty)$ and $i\in I$.
Define $U_t:H_t\mapsto \Gamma(L^2(0,t)\otimes \k)$ by $U_ta_0 = \Omega$ and $$U_t\left(a^{i_1}_{t_1}\otimes a^{i_2}_{t_2}\otimes \cdots \otimes a^{i_k}_{t_k}\right)=e^{i_1}_{0,t_1}\wedge e^{i_2}_{t_1,t_1+t_2}\wedge \cdots \wedge e^{i_k}_{t_1+\cdots+t_{k-1},t_1+\cdots+t_{k-1}+t_k},$$ where $t_1+t_2+\cdots +t_k=t$, $\{i_1, i_2\cdots i_k\}\neq \{0\}$ and $e^0_{t_i, t_{i+1}}$ means there is no vector in the antisymmetric tensor product. $U_{s,t}$ 
%can easily be verified to  
preserves inner products, maps total set of vectors onto a total set, hence extends to a unitary operator, and provides the required isomorphism of  product systems. 
\end{proof}

%We can also additivity of the index with respect to tensor products for spatial super product systems. to be written. in the later section, which proves the additive formula for 

A spatial super-product system with index $n$ is said to be of type II (or type II$_n$) if units do not generate the super-product system. It is type III if units do not exist. 
%As noticed already proper super-product systems are either type II or III. 
%Whilst a type $I_0$ product system must be trivial (i.e. $H_t=\C$); but there exist lots of nontrivial product systems of type II$_0$ (see \cite{T1} and \cite{lieb}), and type III examples (see \cite{pow2}, \cite{T1} \cite{pdct}, \cite{genccr} and \cite{toepcar}). 
Examples of product systems of type II and  III are complicated to construct, but constructing nontrivial examples of super-product systems belonging to those types are readily given by Example \ref{subsemigroup example}. There are three possibilities for $H^{\k}_G(t)$  defined in Example 
\ref{subsemigroup example}  namely
  \begin{itemize}
   \item $G=\N_0$: they are of type I$_{n}$.
   \item $0\in G\neq\N_0$: they are of type II$_{0}$.
   \item $0\notin G$: they are  type III.
  \end{itemize}
  This follows immediately from Lemma 7.1, \cite{MS1}. 
  
%  We can construct super-product systems of type II$_n$ by taking tensor products of type II$_0$ with type $I_n$ product systems. 
  %%%%%%%%%%%%% describe the addits.
If $(H_t, U_{s,t})$ is a super-product system and $(E_t, U_{s,t})$ is a product subsystem 
%(meaning $E_t\subseteq H_t$, and the unitaries are restriction of the isometries, which we continue to denote as $U_{s,t}$), 
then  $(E_t^\perp \cap H_t, U_{s,t})$ is a super-product system. So type III super-product systems can be easily constructed from type II super-product systems, by taking the orthogonal complement of the product system generated by units.

\subsection{Cohomology for spatial super-product systems}

Let $(H_t, U_{s,t})$ be a spatial super-product system with a distinguished unit $\Omega$. The embeddings  $$\iota_{s,t}:H_s\mapsto H_{s+t} \qquad \xi\mapsto U_{s,t}(\xi\otimes\Omega_t)$$ allow us to construct an inductive limit of the family of Hilbert spaces $(H_s)_{s>0}$, which we denote by $H_\infty$, together with embeddings  $\iota_s:H_s\to H_\infty$. We will often abuse notation and identify $H_s$ with its corresponding image in $H_\infty$. Notice that each of the vector $\Omega_s$ is mapped to the same element, which we denote by $\Omega_\infty\in H_\infty$.
 
 We can also define a second family of embeddings
 $$\kappa_{s,t}:H_t\mapsto H_{s+t} \qquad \xi\mapsto U_{s,t}(\Omega_s\otimes\xi).$$ Thanks to the associativity axiom,  the squares
 $$ \xymatrix{ H_s \ar[r]^{\iota_{s,t}} \ar[d]_{\kappa_{r,s}} & H_{s+t} \ar[d]^{\kappa_{r,s+t}} \\ H_{r+s} \ar[r]_{\iota_{r+s,t}} & H_{r+s+t}} $$
 commute for all $r,s,t>0$. So there exist isometries $(\kappa_t:H_\infty\mapsto H_\infty)_{t\geq0}$, which define an action of the semigroup $\R_+$ on $H_\infty$, satisfying
 $\kappa_s\iota_t=\iota_{s+t}\kappa_{s,t}$. 
 
 We say a function $f:\R_+^n\to H_\infty \ominus \C\Omega_\infty$ is adapted if  $f(s_1,\ldots,s_n)\in H_{s_1+\cdots+s_n}$ for all $s_1,\ldots,s_n>0$. Let $C^n=C^n(H,\Omega)$ denote the space of all adapted continuous maps $f:\R_+^n\to H_\infty \ominus \C\Omega_\infty$, and $d^n:C^n(H,\Omega)\to C^{n+1}(H,\Omega)$
be defined by
 \begin{align*}  d^nf(s_1,\ldots,s_{n+1})  :=\kappa_{s_1}f(s_2,\ldots,s_{n+1}) &\\ +\sum_{i=1}^n{(-1)^nf(s_1,\ldots,s_i+ s_{i+1},\ldots,s_{n+1})} &+(-1)^{n+1}f(s_1,\ldots,s_n)\end{align*}.
 % \begin{align*} d^nf(s_1,\ldots,s_{n+1}):=\kappa_{s_1}f(s_2,\ldots,s_{n+1})&+\sum_{i=1}^n{(-1)^nf(s_1,\ldots,s_i+s_{i+1},\ldots,s_{n+1})}\\ &+(-1)^{n+1}f(s_1,\ldots,s_n)\end{align*}.
 
 \begin{lem}
$(C, d)$  forms a cochain complex.
 \end{lem}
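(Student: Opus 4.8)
The plan is to verify directly that $d^{n+1}\circ d^n = 0$ for all $n \geq 1$, together with checking that each $d^n$ maps $C^n$ into $C^{n+1}$, i.e.\ that $d^nf$ is adapted and continuous whenever $f$ is. The continuity is immediate since $\kappa_{s_1}$ depends continuously on $s_1$ (it is a one-parameter semigroup of isometries, hence strongly continuous on the inductive limit) and the remaining terms are continuous reparametrisations of $f$; adaptedness of $d^nf$ follows because $\kappa_{s_1}f(s_2,\ldots,s_{n+1}) \in \kappa_{s_1}H_{s_2+\cdots+s_{n+1}} \subseteq H_{s_1+\cdots+s_{n+1}}$ (using $\kappa_s\iota_t = \iota_{s+t}\kappa_{s,t}$), while $f(s_1,\ldots,s_i+s_{i+1},\ldots,s_{n+1})$ already lies in $H_{s_1+\cdots+s_{n+1}}$ by hypothesis and the last term $f(s_1,\ldots,s_n)$ is embedded via $\iota$ into the same space; one should also note that the image stays orthogonal to $\C\Omega_\infty$ because $\kappa_{s_1}$ fixes $\Omega_\infty$ and reparametrisation preserves the centring condition.

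For the main identity I would compute $d^{n+1}(d^nf)(s_1,\ldots,s_{n+2})$ by substituting the definition of $d^nf$ into the formula for $d^{n+1}$, producing a sum of roughly $(n+2)(n+1)$ terms, and then showing they cancel in pairs. This is the standard bar-resolution / Hochschild-type cancellation: the term coming from applying $\kappa_{s_1}$ twice cancels appropriately with the term obtained by first merging $s_1+s_2$ and then applying $\kappa$; the ``inner'' merging terms $f(\ldots,s_i+s_{i+1},\ldots,s_j+s_{j+1},\ldots)$ appear twice with opposite signs (once from merging the pair $(i,i+1)$ first, once from merging $(j,j+1)$ first), and the two boundary-deletion terms pair off with each other and with the adjacent merging terms. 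The only genuinely new feature compared to a purely algebraic bar complex is the presence of the operator $\kappa_{s_1}$: one must use $\kappa_{s_1}\kappa_{s_2} = \kappa_{s_1+s_2}$ (the semigroup property on $H_\infty$) to match the $\kappa$-term of $d^{n+1}$ applied to the $\kappa$-term of $d^nf$ against the $\kappa$-term applied to the first merging term $f(s_1+s_2,s_3,\ldots)$, and one must use $\kappa_{s_1}\circ(\text{last-term deletion}) = (\text{last-term deletion})\circ\kappa_{s_1}$, which holds because $\kappa_{s_1}$ commutes with the embeddings $\iota$.

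The sign bookkeeping deserves a word of caution, since the formula as written uses $(-1)^n$ rather than $(-1)^i$ in the summand; I would first confirm that this is consistent (or silently read it as $(-1)^i$, which is what makes the complex work), because with the correct alternating signs the cancellation is forced by the combinatorics of which pair of adjacent arguments one merges first. Concretely I would index the $2(n+2)$-ish terms of $d^{n+1}d^nf$ as $(\text{outer operation})\circ(\text{inner operation})$ and exhibit an explicit sign-reversing involution on this index set whose fixed-point-free orbits are exactly the cancelling pairs; this is the cleanest way to avoid grinding through every term.

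The step I expect to be the main obstacle is not any single deep idea but the careful matching of the $\kappa$-terms: one has to be comfortable moving freely between $H_s$ and its image in $H_\infty$, and to use $\kappa_s\iota_t = \iota_{s+t}\kappa_{s,t}$ repeatedly to justify that expressions like $\kappa_{s_1}f(s_2,\ldots,s_i+s_{i+1},\ldots,s_{n+1})$ (which a priori live at different ``levels'' of the inductive system) are literally equal as elements of $H_\infty$ to the corresponding terms arising in $d^{n+1}$ applied to the various pieces of $d^nf$. Once that identification is made, the remainder is the familiar simplicial-identity cancellation and is routine.
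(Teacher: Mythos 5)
Your proposal is correct and is essentially the paper's argument: the paper notes that $d^n$ is the restriction of the usual coboundary map of group cohomology for the action $\R_+\curvearrowright H_\infty$ (so $d^{n+1}d^n=0$ is exactly the bar-complex cancellation you carry out, using $\kappa_{s_1}\kappa_{s_2}=\kappa_{s_1+s_2}$), and the only point it verifies is adaptedness of the image via $\kappa_s(H_t)\subseteq H_{s+t}$, just as you do. Your reading of the sign $(-1)^n$ in the summand as $(-1)^i$ is indeed the intended one (with the constant sign the cancellation would fail already for $d^2d^1$), and your additional checks of continuity and the centring condition are fine.
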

 
 \begin{proof}
 The map $d^n$ is the restriction of the usual coboundary map in the group cohomology for the action $\R_+ \curvearrowright H_\infty$. We only need to check that the image is adapted, but this follows since $\kappa_s (H_t)\subseteq H_{s+t}$.
 \end{proof}
 
 \begin{defn}
  For all $n\geq0$, the collection of   $n$-cocycles for $(H,\Omega)$ is the space $Z^n(H,\Omega):=\Ker(d^n)$ and the collection of $n$-coboundaries is defined by $B^1(H,\Omega)=0$ and, for $n\geq 2$, $B^n(H,\Omega)=\Ran(d^{n-1})$. The $n$-th cohomology group is the space
  $\H^n(E,\Omega):=Z^n(E,\Omega)/B^n(E,\Omega)$.
 \end{defn}

The cohomology groups are invariant under isomorphisms of super-product systems preserving the canonical unit.
 %\begin{prop}\label{morphism prop}
%Let $H=(H_t, U_{s,t})$ and $H=(K_t, U'_{s,t})$  be two spatial super-product systems with canonical units $\Omega$  and $\Omega'$ respectively. Let $V_t:H_t\to K_t$ be an isomorphism of super-product systems taking the unit $(\Omega_t)_{t\geq 0}$ to $(\Omega'_t)_{t\geq 0}$. Then, for each $n\geq 0$, there is an isomorphism
 % $ \Phi:C^n(H,\Omega)\to C^n(K,\Omega')$ which preserves cocycles and coboundaries.  
%In particular, if the automorphism group of $H$ acts transitively on the units, then $\dim \H^n(H,\Omega)$ is independent of the choice of $\Omega$. 
% \end{prop}
 For all examples we consider in this paper, the spaces of higher cocycles are infinite dimensional and the cohomology groups are difficult to compute. We will instead concentrate on certain distinguished subspaces, which are more tractable. To some extent they measure how far the super-product system is from being a product system.
  
  \begin{defn}
   Let $H=(H_t, U_{s,t})$ be a super-product system with canonical unit $\Omega$. A defective $n$-cochain for $(H,\Omega)$ is a member of $C^n(E,\Omega)$ satisfying
   \begin{align*} a(s_1,\ldots,s_n)\perp \iota_{s_1+\cdots+s_n}U_{s_1,\ldots,s_n}(H_{s_1}\otimes\cdots\otimes H_{s_n})\end{align*}
   for all $s_1,\ldots,s_n>0$, where $U_{s_1,\ldots,s_n}:H_{s_1}\otimes\cdots\otimes H_{s_n}\mapsto H_{s_1,\ldots,s_n}$ is canonical unitary map determined uniquely by the associativity axiom.
   
   We denote the space of defective $n$-cochains by $C^n_{def}(H,\Omega)$  and define, similarly, the collection of defective $n$-cocycles $Z^n_{def}(H,\Omega)$ and coboundaries $B^n_{def}(H,\Omega)$. The corresponding quotient $\H^n_{def}(H,\Omega)$ is the $n$-th defective cohomology group.
  \end{defn}

Clearly for a product system, one always has $C^n_{def}(H,\Omega)=\{0\}$. The following is immediate from definitions.
  
 \begin{prop}
  Let $H=(H_t, U_{s,t})$ and $H=(K_t, U'_{s,t})$  be two spatial super-product systems with canonical units $\Omega$  and $\Omega'$ respectively. Let $V_t:H_t\to K_t$ be an isomorphism of super-product systems taking the unit $(\Omega_t)_{t\geq 0}$ to $(\Omega'_t)_{t\geq 0}$. Then, for each $n\geq 0$, there is an isomorphism
  $ \Phi:C^n_{def}(H,\Omega)\to C^n_{def}(K,\Omega')$ which preserves cocycles and coboundaries. 
  \end{prop}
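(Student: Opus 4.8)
The plan is to define $\Phi$ directly by postcomposition with the induced isomorphism at the level of the inductive limits, and then check that it restricts to defective cochains and intertwines the coboundary maps. First I would observe that the fiberwise unitaries $V_t:H_t\to K_t$ are compatible with the embeddings $\iota_{s,t}$ and $\kappa_{s,t}$: since $V_{s+t}U_{s,t}=U'_{s,t}(V_s\otimes V_t)$ and $V_t$ sends $\Omega_t$ to $\Omega'_t$, we get $V_{s+t}\iota^H_{s,t}=\iota^K_{s,t}V_s$ and $V_{s+t}\kappa^H_{s,t}=\kappa^K_{s,t}V_t$. Hence the $(V_t)$ pass to a unitary $V_\infty:H_\infty\to K_\infty$ on the inductive limits with $V_\infty\Omega_\infty=\Omega'_\infty$ and $V_\infty\kappa^H_s=\kappa^K_sV_\infty$ for all $s\geq 0$. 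In particular $V_\infty$ maps $H_\infty\ominus\C\Omega_\infty$ onto $K_\infty\ominus\C\Omega'_\infty$.

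Next I would set $(\Phi f)(s_1,\ldots,s_n):=V_\infty\big(f(s_1,\ldots,s_n)\big)$ for $f\in C^n(H,\Omega)$. Adaptedness is preserved because $V_\infty$ restricted to $H_{s_1+\cdots+s_n}$ is $V_{s_1+\cdots+s_n}$, which maps into $K_{s_1+\cdots+s_n}$; continuity is preserved because $V_\infty$ is a fixed bounded operator. So $\Phi:C^n(H,\Omega)\to C^n(K,\Omega')$ is a well-defined linear bijection, with inverse induced by $V_\infty^{-1}$. That $\Phi$ commutes with $d^n$ is a term-by-term check: the middle and last terms of the coboundary formula only involve evaluating $f$ and adding, which $V_\infty$ respects by linearity, and the first term $\kappa_{s_1}f(s_2,\ldots,s_{n+1})$ is handled by the intertwining relation $V_\infty\kappa^H_{s_1}=\kappa^K_{s_1}V_\infty$. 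Consequently $\Phi$ carries $Z^n$ into $Z^n$ and $B^n$ into $B^n$.

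Finally I would verify that $\Phi$ respects the defectiveness condition. The key point is that $V_\infty$ intertwines the relevant "product" subspaces: from $V_{s+t}U_{s,t}=U'_{s,t}(V_s\otimes V_t)$ and induction on $n$ one gets $V_{s_1+\cdots+s_n}U^H_{s_1,\ldots,s_n}=U'^K_{s_1,\ldots,s_n}(V_{s_1}\otimes\cdots\otimes V_{s_n})$, so $V_\infty$ maps $\iota_{s_1+\cdots+s_n}U^H_{s_1,\ldots,s_n}(H_{s_1}\otimes\cdots\otimes H_{s_n})$ unitarily onto the corresponding subspace of $K_{s_1+\cdots+s_n}$. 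Since $V_\infty$ is unitary it preserves orthogonality to this subspace, so $f\in C^n_{def}(H,\Omega)$ if and only if $\Phi f\in C^n_{def}(K,\Omega')$. Combining this with the previous paragraph, $\Phi$ restricts to an isomorphism $C^n_{def}(H,\Omega)\to C^n_{def}(K,\Omega')$ preserving cocycles and coboundaries, as claimed. None of the steps presents a genuine obstacle; the only mild care needed is the bookkeeping that the associativity axiom really does make the iterated maps $U_{s_1,\ldots,s_n}$ well-defined and compatible with the $V_t$, which is exactly the content already packaged in the associativity and in Definition \ref{spdct-iso}.
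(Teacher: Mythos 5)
Your argument is correct and is precisely what the paper has in mind: the paper dismisses this proposition as ``immediate from definitions,'' and your write-up simply fills in the natural details (the unitaries $V_t$ intertwine the embeddings $\iota_{s,t}$, $\kappa_{s,t}$, hence pass to the inductive limit, commute with $d^n$, and preserve the defectiveness orthogonality because they carry the iterated product subspaces onto one another). No gap and no genuinely different route --- this is the same proof, spelled out.
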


 %Our definition excludes the trivial cocycles
 %$$C(t_1,\ldots,t_n)= \lambda t_1\cdots t_n \Omega_\infty \qquad (t_1,\ldots,t_n\in ).$$
 %This means that, for the trivial super-product system 
 %$H_t=\C$, $U_{s,t}(z\otimes w)=zw$, all cohomology groups have dimension $0$.
 One can give an equivalent definition for the cohomology without referring to the inductive limit. For instance, there is a bijective correspondence between the $1-$cocycles and the addits we have defined earlier. Further 
 %when $H$ is a product system, 
 $\dim Z^1(H,\Omega)$ is the index of $H$, which is independent of the choice of $\Omega$. It is also  easy to see that there is a bijective correspondence with  $2-$cocycles and the $2-$addits defined below.

\begin{defn}
  A $2-$addit for a spatial super-product system $(H_t, U_{s,t})$, with respect to a canonical unit $\{\Omega_t\}$, is a measurable family of vectors $\{a_{s,t}:s, t\geq 0\}$ satisfying
\begin{itemize}
\item[(i)] $a_{s,t} \in H_{s+t}$ $\forall ~s, t\geq 0$,
\item[(ii)] $U_{r+s, t}(a_{r,s}\otimes \Omega_t) +a_{r+s, t}=U_{r, s+t}(\Omega_r \otimes a_{s,t})+a_{r, s+t}$  $\forall ~~r, s,t\geq0$.
\end{itemize}

A $2-$addit is said to be defective if further $a_{s,t} \in \left( U_{s,t}(H_s\otimes H_t)\right)^\perp$  $\forall ~s, t\geq 0$.
\end{defn}

We denote the set of all defective $2-$addits by $\mathfrak{A}^2_\Omega(H)$.  Note that a product system, in particular a type I super-product system, does not admit defective $2-$addits. We can further divide type II$_0$ super-product systems depending upon the existence and abundance of defective $2-$addits.  We say a type II$_0$ super-product system, with a canonical unit $\Omega$, is type II$_0$-I if it is  generated by $\{\Omega\}\cup \mathfrak{A}^2_\Omega$ in the sense, for each $T>0$,  
$H_T$  is closure of the linear span of the set
%$$H_T = \overline{span}
$$\{a^1_{s_1, t_1} a^2_{s_2, t_2}  \cdots a^n_{s_n, t_n} : ~a^i \in \{\Omega\} \cup \mathfrak{A}^2_\Omega(H),~ i=1,\cdots ,n~\sum^{n}_{i=1} (s_i+t_i)= T \}.$$

\begin{rem}\label{II-I}  For  $\xi, \eta \in \k$, it is easy to check that the family $$a_{\xi, \eta} = \{(1_{0, s}\otimes \xi) \wedge (1_{s, s+t}\otimes \eta):s,t\geq 0\}$$ is a $2-$addit  for the Clifford super-product system $H^{\k}_{2\N_0}$. Further it is a direct verification to see that the subset $\{a_{\xi, \eta}: \xi, \eta \in \k\}$ of  $\mathfrak{A}^2_\Omega$,
%$$\{(1_{0, s}\otimes \xi) \wedge (1_{s, s+t}\otimes \eta): \xi, \eta \in \k\} \subseteq \mathfrak{A}^2_\Omega,$$ 
together with $\Omega$, generate the Clifford super-product system. In Section \ref{2-addits computation} we get the precise form of a $2-$addit.
\end{rem}

%\begin{defn}
%A type II$_0$ super-product system with a canonical unit $\Omega$ is type II$_0$-I if it is generated by $\{\Omega\}\cup \mathfrak{A}^2_\Omega$.
%\end{defn}
%not correct
% to be defined with two parameters
 
\section{CAR flows and even CAR flows}\label{CAR} In this section we recall the \en-semigroups associated with the canonical anti-commutation relations, and compute their super-product systems. 
CAR flows have been already discussed in \cite{Am}, \cite{BISS} and \cite{Bk}. But the problem of showing non-cocycle-conjugacy for CAR flows with different ranks  has still remained open.

Let $K$ be an infinite dimensional separable complex Hilbert space.  
We denote by $\A(K)$ the CAR algebra over $K$, which is the universal $C^*$-algebra generated by 
$\{a(x): x \in K\}$, where $x \mapsto a(x)$ is an antilinear map satisfying the CAR relations:  
\begin{eqnarray}\label{a-relations}
a(x)a(y) +a(y)a(x)&  = & 0, \\ \nonumber
a(x)a(y)^* +a(y)^*a(x) & = & \ip{x,y}1, 
\end{eqnarray} for all $x,y \in K$. \nonumber 
%Since $\A(K)$ is known to be simple and any set of operators satisfying the CAR relations generates a $C^*$-algebra  canonically isomorphic to $\A(K)$, we may take $\A(K)$ to be the $C^*-$algebra generated by the creation and annihilation operators on $\Gamma(K)$. 
%For any state $\varphi$ of $\A(K)$, there exists a unique positive contraction $A\in B(K)$ satisfying 
%$\varphi(a(f)a(g)^*)=\inpr{Af}{g}$ for $\forall f,g\in K$. 
%We call $A$ the covariance operator (or 2-point function) of $\varphi$. 

The \textit{quasi-free state} $\omega_A$ on $\A(K)$, associated with a positive contraction $A \in B(K)$, 
is the state determined by its $2n$-point function as  
$$\omega_A(a(x_n) \cdots a(x_1)a(y_1)^* \cdots a(y_m)^* ) = \delta_{n,m} \det (\langle x_i, Ay_j\rangle) ,$$ 
where $\det(\cdot)$ denotes the determinant of a matrix.
Given a positive contraction, it is a fact that such a state always exists and is uniquely determined by the above relation. 
%This is usually called the gauge invariant quasi-free state (or generalised free state), since it preserves the gauge group action $a(f)\mapsto e^{it}a(f)$ for all $t \in \R$.  As we will be dealing only with gauge invariant quasi-free states, we just call them as quasi-free states. 

We denote by $(H_A, \pi_A, \Omega_A)$ the GNS triple associated with 
a quasi-free state $\omega_A$ on $\A(K)$, and set $\m_A:=\pi_A(\A(K))''$. 
%We call $\pi_A$ the \textit{quasi-free representation} associated with $A$.
Every quasi-free state $\omega_A$ of the CAR algebra $\A(K)$ is a factor state, %that is  the von Neumann algebra $\m_A$ is a factor 
(see \cite[Theorem 5.1]{PS}). 
We summarize standard results on the types of von Neumann algebras obtained through the quasi-free representations, in the following theorem. For a proof we refer to  \cite[Lemma 5.3]{PS}, and  for (i) we refer to \cite[Chapter 13]{Arv}, \cite[Section II]{pow2}.
 
\begin{thm}\label{qfstate} Let $K$ be a Hilbert space, let $A\in B(K)$ be a positive contraction.
\begin{itemize}
\item [(i)] $\m_A$ is of type I if and only if $\tr(A -A^2) < \infty.$  
\item [(ii)] $\m_A$ is of type II$_1$ if and only if $A-\frac{1}{2}$ is Hilbert-Schmidt. 
\item [(iii)] $\m_A$ is of type II$_\infty$ if and only if there exists a spectral projection $P$ of $A$, with both $P$ and $1-P$ are of infinite dimensions, $(PAP)^2 -PAP$ is trace-class and $(1-P)A(1-P)- (1-P)/2$ is Hilbert-Schmidt.  
\item [(iv)]   $\m_A$ is of type III otherwise. 
\end{itemize}
\end{thm}

From here onwards we let $K= L^2((0,\infty),\k)$ and $A\in B(K)$ be a positive contraction satisfying  $Ker(A)=Ker(1-A)=\{0\}$. We further assume that $A$ is a Toeplitz operator, meaning $T_t^*AT_t=A$  for all $t\geq 0$. 
Let $j$ be a conjugation on $\k$, and we continue to denote the conjugation on $K$ also as $j$, obtained by $(jf)(s):=jf(s)$  for $f \in K$ for $s\geq0.$ The GNS representation associated with the quasi-free state $\omega_A$ can be concretely realized on the doubled anti-symmetric Fock space $H_A=\Gamma(K)\otimes \Gamma(K)$ by
 $$\pi_A(a(f)) = a(\sqrt{1-A}f)\otimes \Gamma(-1) + 1\otimes a^*(j\sqrt{A}f),$$  with cyclic and separating vector $\Omega_A = \Omega\otimes \Omega$, where $\Omega$ is the vacuum vector of $\Gamma(K)$ and $\Gamma(-1)$ is the second quantization (see \cite{Araki Wyss} and \cite{PS}). Whenever it is convenient we will use this picture. 
 
If $X\in B(K)$ is a positive Toeplitz operator, then $\sqrt{X}T_t\sqrt{X}^{-1}$  extends to an isometry on $H$ (see \cite[Lemma 7.3]{MS2}). We denote the extended semigroup of isometries by $(T^X_t)_{t\geq 0}$. Denote $Y_t=\begin{bmatrix} T_t^{{1-T}} & 0 \\ 0 & jT_t^{{T}}j \end{bmatrix},$ for $t\geq 0$.  Now it follows from \cite[Proposition 2.1.3]{Arv}, there exists a unique \en-semigroup $\{\theta^A_t:t\geq 0\}$ on $B(H_A)$ satisfying  $\theta_t^A(a(f))= a(Y_tf)~~~\forall f \in K\oplus K.$
%The restriction of $\theta^A$ provides the unique \en-semigroup $\alpha^A=\{\alpha^A_t:t\geq 0\}$ on  $\m_A$, determined by $$\alpha_t^A(\pi_A(a(f)))=\pi_A(a(T_tf)),\quad \forall f\in K.$$ We call $\alpha^A$ as the \textit{Toeplitz CAR flow} on $\m_A$ associated with $A$.

\begin{ex} 
The restriction of $\theta^A$ to $\m_A$ provides the unique \en-semigroup $\alpha^A=\{\alpha^A_t:t\geq 0\}$ on  $\m_A$, determined by 
$$\alpha_t^A(\pi_A(a(f)))=\pi_A(a(T_tf)),\quad \forall f\in K.$$
We call $\alpha^A$ as the \textit{Toeplitz CAR flow} on $\m_A$ associated with $A$.

Let $\m^e_A$ denotes the von Neumann subalgebra generated by the even products of $\pi_A(a(f)), \pi_A(a^*(g))$. This is the fixed point algebra of the action given by $\pi_A(a(f)) \mapsto -\pi_A(a(f))$. If we assume $tr(A^2-A)=\infty$, then this action is outer and hence $\m_A$ is a factor. The restriction of $\alpha^A$ to $\m_A^e$  is called as the \emph{Toeplitz even CAR flow} associated with $A$ and we denote it by $\beta^A$.
%The gauge group action on $\m_A$ is given by $\pi_A(a(f))\mapsto e^{it} \pi_A(a(f))$ for $ t\in \R$. We denote the von Neumann subalgebra fixed by the  gauge group action by $\m^0_A$. This is indeed the von Neumann algebra generated by the span of $\pi_A(a(f_1)a(f_2)\cdots a(f_n)a^*(g_1)a^*(g_2)\cdots a^*(g_n))$.  When $tr(A^2-A)=\infty$, $\m^0_A$ is factor (see \cite[Section 3]{Bak}). The restriction of $\alpha^A$ to $\m^0_A$ is called as the \emph{Toeplitz GICAR flow} associated with $A$ and we denote it by $\gamma^A$. 
\end{ex}

By construction, all these \en-semigroups has a faithful normal invariant state given by the normal extension of $\omega_A$, which implies the existence of the canonical unit $S=(S_t)_{t\geq 0}$ defined by the isometric extension of $S_tx\Omega=\alpha_t(x)\Omega$ for all $x \in \m_A$, $t\geq 0$. 
%A necessary and sufficient condition for the canonical unit to be a multi-unit (in other words a unit in the associated super-product system, see \cite[Section 3]{MS2}) is that it commutes with the modular conjugation $J_\Omega$ (see \cite[Proposition 3.4]{MS2}). This condition is called as the equi-modularity condition in \cite{BISS}. 
The following Proposition characterises when $S$ is a multi-unit.

\begin{prop}\label{Toeplitz and extension property}
For the  \en-semigroups $\alpha^A, \beta^A$,  the canonical unit is a multi-unit if and only if $A=1_{L^2(\R)}\otimes R$ for some $R\in B(\k)$. 
 \end{prop}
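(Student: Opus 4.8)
The plan is to analyze the canonical unit $S=(S_t)_{t\geq 0}$ concretely in the Araki–Wyss picture, where $H_A=\Gamma(K)\otimes\Gamma(K)$ with $K=L^2((0,\infty),\k)$ and $\Omega_A=\Omega\otimes\Omega$. The key point is that $S_t$ is the isometry sending $x\Omega_A$ to $\alpha^A_t(x)\Omega_A$, and multi-unit-ness is the condition that the associated super-product system spaces $H^{\alpha}_t$ collapse to the one-dimensional spaces $\C S_t$; equivalently, that the complementary semigroup $(\alpha^A)'$ behaves compatibly, in the sense that the product-system description forces the Fock vacuum to be the only vector implementing the shift. First I would identify $S_t$ explicitly: since $\alpha^A_t(\pi_A(a(f)))=\pi_A(a(T_tf))$ and $\pi_A(a(f))$ has the Araki–Wyss form with $\sqrt{1-A}$ and $j\sqrt{A}$, the operator $S_t$ is the second quantization of the isometry $Y_t=\diag(T_t^{1-A},jT_t^{A}j)$ restricted appropriately; in particular $S_t$ maps $H_A$ isometrically onto $\Gamma(Y_t K\oplus Y_tK)\subseteq H_A$, and $S_t=\Gamma(T_t^{1-A})\otimes\Gamma(jT_t^{A}j)$ after matching the two Fock factors.

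Next I would translate the multi-unit condition into an orthogonality statement about these ranges. Recall (from the discussion before Theorem~\ref{sps theorem} and the structure of $H^{\alpha}_t$) that $S$ fails to be a multi-unit exactly when $H^{\alpha}_t$ is strictly larger than $\C S_t$, and a standard computation with quasi-free states shows that $H^{\alpha}_t\ominus\C S_t$ is detected by one-particle vectors: there is an extra element of $H^{\alpha}_t$ precisely when there exists $g\in K$ with $\pi_A(a(g))$ (or its adjoint) intertwining $\alpha^A_t$ and $\alpha'^A_t$ in the required way but not lying in the span generated by $S_t$. Working this out, the obstruction reduces to whether the ``defect spaces'' $K\ominus T_t^{1-A}K$ and $K\ominus jT_t^{A}jK$ — equivalently $\ran(1-T_t^{1-A}(T_t^{1-A})^*)$ and the analogous one for $A$ — are orthogonal in an appropriate pairing, for all $t$. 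I would then show, using the Toeplitz hypothesis $T_s^*AT_s=A$, that these defect ranges coincide with fixed subspaces determined by $A$; and that the compatibility needed for $S$ to be a multi-unit forces the defect range of $T_t^{1-A}$ to be orthogonal to that of $T_t^{A}$ inside the relevant one-particle space.

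The last step is to show this orthogonality-for-all-$t$ is equivalent to $A=1_{L^2(0,\infty)}\otimes R$ for some $R\in B(\k)$. The direction ($\Leftarrow$) is a direct check: if $A$ acts as $R$ on each fiber and as the identity in the time variable, then $\sqrt{1-A}$ and $\sqrt{A}$ commute with $T_t$, so $T_t^{1-A}=T_t^{\mathrm{h}}=T_t\otimes(\text{fiber part})$ literally equals $T_t$ up to a fiber multiplier, the two defect ranges are both $L^2((0,t),\k)$ twisted by $\sqrt{1-R}$ and $\sqrt R$ respectively, and the pairing that must vanish is exactly $\langle\sqrt{1-R}\,\cdot,\sqrt R\,\cdot\rangle$ integrated against something that makes it vanish — I would verify that the construction indeed yields a multi-unit, cross-checking against Proposition~\ref{doubly intertwining unitary shizzle} and the fact that in this case $\alpha^A$ is literally the CAR flow tensored with a diagonalizable twist. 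The direction ($\Rightarrow$) is the hard part: I expect the main obstacle to be extracting, from ``the defect ranges are orthogonal (or suitably compatible) for every $t>0$,'' the rigidity statement that $\sqrt A$ (hence $A$) must be a Toeplitz operator of the special form $1\otimes R$ rather than a general Toeplitz operator. My approach would be to differentiate/take limits as $t\to 0^+$ to turn the family of conditions into a statement about the generator of $(T^A_t)$ versus the generator of $(T^{1-A}_t)$, use that a Toeplitz operator commuting with all $T_t$ must be a fiber multiplier (a Hahn–Banach/Wiener-type argument on $L^2((0,\infty),\k)$), and conclude. The technical heart is thus the infinitesimal analysis of the intertwining space $H^\alpha_t$ near $t=0$ combined with the classification of Toeplitz operators that are simultaneously ``diagonal'' in the time variable.
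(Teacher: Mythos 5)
There is a genuine gap, and it starts with your unpacking of the hypothesis. ``The canonical unit is a multi-unit'' does \emph{not} mean that the super-product system collapses to the line $\C S_t$; it means that $S_t$ itself belongs to $H^\alpha_t$, i.e.\ that $S_t$ intertwines not only $\alpha_t$ with $\m_A$ but also the complementary semigroup $\alpha'_t$ with $\m_A'$. Indeed, in the case $A=1\otimes R$ (exactly when $S$ \emph{is} a multi-unit) the paper computes $H^\alpha\cong E^{\k}_{2\N_0}$, which is far from one-dimensional, so the condition you propose to analyse is not the right one, and the subsequent reduction to ``orthogonality of the defect ranges of $T^{1-A}_t$ and $T^A_t$'' is never justified and has no clear route back to the statement. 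The paper's proof goes through a different and much shorter channel: by the cited criterion (\cite[Proposition 3.4(iii)]{MS2}, via Tomita--Takesaki), $S$ is a multi-unit iff $\alpha_t=\sigma^\Omega_{-s}\circ\alpha_t\circ\sigma^\Omega_s$ for all $s,t$; the modular group of the quasi-free state is the Bogoliubov group of $A^{is}(1-A)^{-is}$, so the condition becomes that these unitaries commute with every $T_t$; a \emph{unitary} commuting with the whole right-shift semigroup on $L^2((0,\infty),\k)$ must be of the form $1\otimes U_s$ (the commutant of $\{T_t,T_t^*\}$ is $1\otimes B(\k)$), and passing to the analytic generator gives $A(1-A)^{-1}=1\otimes X$, hence $A=1\otimes R$.

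Two further points in your sketch would need repair even on their own terms. First, the forward direction is only a declaration of intent (``I expect\ldots'', ``my approach would be\ldots''); no actual argument is given for the rigidity step, which is precisely the content of the proposition. Second, the lemma you invoke --- ``a Toeplitz operator commuting with all $T_t$ must be a fiber multiplier'' --- is false as stated: every operator commuting with all right shifts is automatically Toeplitz, and this commutant is large (operator-valued $H^\infty$ of a half-plane; e.g.\ $T_1$ itself qualifies). The rigidity only holds for operators that are self-adjoint or unitary, since then commutation with the $T_t$ forces commutation with the $T_t^*$ as well; this is exactly why the paper applies it to the unitaries $A^{is}(1-A)^{-is}$ rather than to a general Toeplitz operator. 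So the key missing ideas are the modular-group characterisation of multi-units and the Bogoliubov form of the modular group of a quasi-free state; without them your plan does not reach a usable commutation condition.
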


  \begin{proof} One way is clear. For the other way, $S$ is a multi-unit if and only if the modular group $(\sigma_s^\Omega)_{s\in \R}$ satisfies $\alpha_t=\sigma_{-s}^{\Omega}\circ\alpha_t\circ\sigma_s^\Omega$ for all $t\geq0$, $s\in\R$(see \cite[Proposition 3.4, (iii)]{MS2}). The modular automorphism $\sigma_s^\Omega$ is the Bogoliubov automorphism associated with $A^{is}(1-A)^{-is}$. Hence  $A^{is}(1-A)^{-is}$ commute with $T_t$ for all $t\geq 0$ and so it is of the form $1_{L^2(\R_+)}\otimes U_s$. By considering the (analytic) generator, we infer that $A(1-A)^{-1}=1_{L^2(\R)}\otimes X$, for some densely defined self-adjoint operator $X$ on $\k$. For  $f \in \Dom((1-A)^{-1})$ we have $$(1-A)^{-1}f=(1+A(1-A)^{-1})f= (1_{L^2(\R_+)}\otimes (1+X))f.$$  This implies that $(1+X)$ has bounded inverse and $$A= 1_{L^2(\R_+)}\otimes \left(1-(1+X)^{-1}\right).$$ The proofs for $\beta^A$  is  similar.  \end{proof}

From here onwards, we assume that $A$ is of the form $1_{L^2(\R_+)}\otimes R$ for some $R\in B(\k)$. Since we have assumed both $A, 1-A$ are injective (in particular $R^2-R\neq 0$), $\tr(A^2-A)=\tr\left(1_{L^2(\R_+)}\otimes (R^2-R)\right) = \infty,$ and hence $\m_A$ is not of type I. In the same way $A-\frac{1}{2}$ is Hilbert-Schmidt if and only $R=\frac{1}{2}$; in that case it is type II$_1$. $\m_A$ can not be of type II$_\infty$, since in that case there will be a non-trivial subspace of $\k$, which is an eigen space for $R$ with eigen value $1$, which will contradict the assumption that $1-A$ is injective. (Even if we relax the condition $1-A$ being  injective, we will only end up with a tensor product of Clifford flow (of even index) with a CAR flow on type I$_\infty$ factor, which are completely classified  by \cite[Theorem 6.1]{MS2}.)
So with our assumptions when $R\neq \frac{1}{2}$, $\m_A$ is of type III. We may replace the suffixes $A$ by $R$ in all our notations 
 %and  denote $\alpha^A, \beta^A, \gamma^A$ by
  and call $\alpha^R, \beta^R$ as just CAR flows, even CAR flows respectively. 
  %In the GNS triple and $\m_A$ also we may replace $A$ by $R$.
%We may also denote $H_A, \m_A$ by $H_R, \m_R$ respectively.

% reason why type II_1 and type III

For $m_0\in \m$, it is well-known that the map $m\Omega\mapsto mm_0\Omega$ for all $m\in \m$  is closable; we denote the closure by $\rho_{m_0}$ and its domain by $D(\rho_{m_0})$.  The proof of the first part of the following lemma is exactly same as \cite[Lemma 8.3]{MS1}. The only fact used there, by assuming $\m$ to be a II$_1$  factor, is  $S_t$ being a multi-unit (which was automatic). 

\begin{lem}\label{EtMSt} Let $\alpha$ be an \en-semigroup acting standardly on $\m$ with cyclic and separating vector $\Omega$ satisfying $\ip{\alpha_t(m)\Omega, \Omega}= \ip{m\Omega, \Omega}$ for all $m \in \m$. If further the canonical unit $S=(S_t)_{t\geq 0}$  is a multi-unit,  then 
$$H^\alpha_t= [ \m S_t ] \cap {\alpha_t(\m)}^\prime S_t, $$ where $[\m  S_t ]$
denotes the weak operator closure of $\m  S_t$.

 For any $A=TS_t \in H^\alpha_t$, with $T \in \alpha_t(\m)' $,  $$T\Omega \in D(\rho_{\alpha_t(m)})~ \mbox{and}~ \rho_{\alpha_t(m)}(T\Omega) = T \alpha_t(m)\Omega,~~ \forall~m \in \m.$$
\end{lem}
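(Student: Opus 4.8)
The statement to prove (Lemma \ref{EtMSt}) is essentially a transcription of \cite[Lemma 8.3]{MS1} with the hypothesis ``$\m$ is a type II$_1$ factor'' weakened to ``$\Omega$ is tracial on the relevant subspace and $S$ is a multi-unit''. Accordingly, the plan is to follow the argument of \cite[Lemma 8.3]{MS1} verbatim, checking at each step that the only structural input genuinely used is the multi-unit property of $S=(S_t)_{t\geq0}$, together with the weaker invariance hypothesis $\ip{\alpha_t(m)\Omega,\Omega}=\ip{m\Omega,\Omega}$ in place of traciality.

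First I would establish the inclusion $[\m S_t]\cap \alpha_t(\m)'S_t\subseteq H^\alpha_t$. Take $X=TS_t$ with $T\in\alpha_t(\m)'$ and $X$ in the weak closure of $\m S_t$. One checks directly that $\alpha_t(m)X=Xm$ for all $m\in\m$: indeed $\alpha_t(m)TS_t=T\alpha_t(m)S_t=TS_tm$, using $T\in\alpha_t(\m)'$ and the defining intertwining property $\alpha_t(m)S_t=S_tm$ of the canonical unit. For the ``$\alpha'_t(m')X=Xm'$'' half of the definition of $H^\alpha_t$ in Theorem \ref{sps theorem}, this is precisely where the multi-unit hypothesis on $S$ enters: being a multi-unit means $S_t$ also intertwines the complementary semigroup $\alpha'$ in the same manner, i.e.\ $\alpha'_t(m')S_t=S_tm'$, and since $[\m S_t]\subseteq \m'' = \m$ acts on the correct side one gets the second intertwining relation. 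Conversely, for $X\in H^\alpha_t$, the relation $\alpha_t(m)X=Xm$ already forces $X S_t^* \in \alpha_t(\m)'$ (after checking $S_t^* X$ lands appropriately), so $X = (XS_t^*)S_t \in \alpha_t(\m)'S_t$; and the membership in $[\m S_t]$ follows from cyclicity of $\Omega$ combined with $X\Omega = XS_t^*\Omega$ being approximable, exactly as in \cite{MS1}. The invariance of the state under $\alpha_t$ is used here to guarantee that $S_t$ is an isometry with the right range projection and that the approximation in the weak topology closes up.

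For the second part, given $A=TS_t\in H^\alpha_t$ with $T\in\alpha_t(\m)'$, I would show $T\Omega\in D(\rho_{\alpha_t(m)})$ and $\rho_{\alpha_t(m)}(T\Omega)=T\alpha_t(m)\Omega$ for all $m\in\m$. The point is that $\rho_{\alpha_t(m)}$ is the closure of $n\Omega\mapsto n\,\alpha_t(m)\Omega$ for $n\in\m$; choosing a net $n_i\in\m$ with $n_i S_t \to T$ weakly (from $T\in[\m S_t]$), one has $n_i\Omega = n_i S_t\Omega \to T\Omega$ and $n_i\alpha_t(m)\Omega = n_i S_t (m\Omega)=(n_iS_t)(m\Omega)\to T m\Omega = T\alpha_t(m)\Omega$, where I again use $\alpha_t(m)S_t = S_t m$ and hence $\alpha_t(m)\Omega = S_t m\Omega$ when restricted appropriately; closedness of $\rho_{\alpha_t(m)}$ (and a standard bounded-net/Kaplansky argument to upgrade weak to strong convergence on the dense domain) then gives the claim.

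The main obstacle I anticipate is purely bookkeeping: verifying that every place in \cite[Lemma 8.3]{MS1} where the II$_1$-factor structure (normal trace, finiteness) was invoked can be replaced either by the multi-unit hypothesis or by the stated $\alpha_t$-invariance of $\Omega$. In particular I expect the delicate point to be confirming that $S_t$ has the correct range projection and that the weak-operator closures $[\m S_t]$ behave as in the finite case — in the II$_1$ setting $S_t S_t^*$ lies in $\alpha_t(\m)'\cap\m$ and one uses the trace freely, whereas here one must argue via the faithful normal invariant state $\ip{\,\cdot\,\Omega,\Omega}$ and modular theory (as already exploited in Proposition \ref{Toeplitz and extension property}) that the same conclusions hold. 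Since the excerpt explicitly asserts ``The only fact used there\ldots is $S_t$ being a multi-unit'', I would treat this as a sanity check rather than a new argument, and simply cite \cite[Lemma 8.3]{MS1} for the bulk of the computation, spelling out only the multi-unit substitution.
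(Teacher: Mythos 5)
Your proposal follows essentially the same route as the paper: the identity $H^\alpha_t=[\m S_t]\cap\alpha_t(\m)'S_t$ is deferred to \cite[Lemma 8.3]{MS1} with the observation that the only structural input is the multi-unit property of $S_t$, and the second assertion is proved by approximating the given element of $[\m S_t]$ by a net $m_\lambda S_t$, applying it to $\Omega$ and to $m\Omega$, and invoking closedness of $\rho_{\alpha_t(m)}$ --- which is exactly the paper's argument. Three small slips should be repaired, though none affects the idea. First, the net must approximate $A=TS_t$ (this is what lies in $[\m S_t]$), not $T$ itself; with that correction the second limit reads $(m_\lambda S_t)(m\Omega)=m_\lambda\alpha_t(m)\Omega\to (TS_t)(m\Omega)=T\alpha_t(m)\Omega$, whereas your written chain ``$\to Tm\Omega=T\alpha_t(m)\Omega$'' is false as stated. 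Second, the weak-to-strong upgrade is not a Kaplansky argument (the net need not be bounded and $\m S_t$ is not a $*$-algebra); it is simply that the WOT- and SOT-closures of the convex set $\m S_t$ coincide, or, alternatively, one can skip the upgrade because the graph of the closed operator $\rho_{\alpha_t(m)}$ is a norm-closed subspace, hence weakly closed. Third, in your sketch of the easy inclusion the aside $[\m S_t]\subseteq\m''=\m$ is not correct in general ($S_t\notin\m$); the relation $\alpha'_t(m')(nS_t)=(nS_t)m'$ follows instead from $n\in\m$ commuting with $\alpha'_t(m')\in\m'$ together with the multi-unit identity $\alpha'_t(m')S_t=S_tm'$, and then passes to weak limits.
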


%Since the following useful assertion will be used repeatedly in the computation of super-product systems, we separate it as a lemma. 
%For $m_0\in \m$, it is well-known that the map $m\Omega\mapsto mm_0\Omega$ for all $m\in \m$  is closable; we denote the closure by $\rho_{m_0}$ and its domain by $D(\rho_{m_0})$. 
%From Proposition \ref{EtMSt}, we have $$H_t^\alpha=E_t^\alpha\cap E^{\alpha'}_t=[\m S_t]\cap \alpha_t(\m)'S_t.$$
%Hence for any  $ A\in H^\alpha_t$ there exists a $T \in \alpha_t(\m)'$ such that $A=TS_t$. 
%$$A\Omega=TS_t\Omega=T\Omega.$$
%\begin{lem}\label{rho}
%Let $\alpha$ be an \en-semigroup acting standardly on a factor $\m$, . Then for any $A=TS_t \in H^\alpha_t$, $$T\Omega \in D(\rho_{\alpha_t(m)})~ \mbox{and}~ \rho_{\alpha_t(m)}(T\Omega) = T \alpha_t(m)\Omega,~~ \forall~m \in \m.$$
%\end{lem}

\begin{proof}
Since $TS_t \in [\m S_t]$, there exists a net $\{m_\lambda\}_{\lambda \in \Lambda}\subseteq \m$ such that $m_\lambda S_t$ converges strongly to $TS_t$. This means $\{m_\lambda\Omega\} \subseteq D(\rho_{\alpha_t(m)})$ converges to $T\Omega$ and $m_\lambda \alpha_t(m)\Omega$ converges to $T \alpha_t(m)\Omega$ for all $m \in \m$, which means the assertion of the lemma.     
\end{proof}

In \cite{Bk}, the super-product systems of CAR flows were computed, assuming few conditions. We provide here a direct proof. 
Let $u_R(f) = \frac{1}{\sqrt{2}}\left(\pi_R(a(f)) + \pi_R(a^*(f))\right)$ for $f \in L^2((0,\infty),\k)$. Then $$u_R(f)u_R(g) + u_R(g)u_R(f) = \mbox{Re}\ip{f,g},~\forall f,g \in L^2((0,\infty),\k),$$ and $\m_R = \{u_R(f) : f \in L^2((0,\infty),\k)\}''.$ 
Since the range of $R^{\frac{1}{2}}$ is dense, we can choose  a total set $\{\xi_k: k \in \N\}\subseteq \k$ such that $\{R^{\frac{1}{2}}\xi_k\}$ forms an orthonormal basis for $\k$. (Choose any countable linearly independent total subset of $\k$, then its image under $R^{\frac{1}{2}}$ is also linearly independent and total. Now use Gram-Schmidt orthogonalization.) If $\{e_n: n \in \N\}$ be any orthonormal basis for $L^2(0,\infty)$, then it is easy to check that the collection $\{u_R(e_{i_1}\otimes \xi_{j_1})u_R(e_{i_2}\otimes \xi_{j_2})\cdots u_R(e_{i_m}\otimes \xi_{j_m})\Omega\}$ with $1\leq i_1<i_2<\cdots <i_m$, $1\leq j_1<j_2<\cdots <j_m$,  $m \in \N_0$ forms an orthonormal basis for the GNS Hilbert space $H_R$ (when $m=0$ we take $\Omega$). Now it is clear that the following choices can be made.
Pick distinct posets $\Lambda_1$, $\Lambda_2$ order isomorphic to $\N$. Let \begin{align*} \p= & \{I=(i_1, i_2 \cdots i_m)\in \Lambda_1^m:~1\leq i_1<i_2<\cdots <i_m, m \in \N_0\};\\ \F= & \{F=(j_1, j_2 \cdots j_m)\in\Lambda_2^m:~1\leq j_1<j_2<\cdots <j_m, m \in \N_0\}.\end{align*}  Choose  $\{f_i\}_{i\in \Lambda_1}\subseteq L^2((0,t),\k)$ and $\{g_j\}_{j \in \Lambda_2}\subseteq  L^2((t,\infty),\k)$, so that 
%so that $\{f_i\}_{i\in \Lambda_1}\cup\{g_j\}_{j \in \Lambda_2}$ forms an orthonormal basis for $L^2((0,\infty),\k)$ and satisfies $\ip{f,A g}\in \R$  for $f,g \in  \{f_i\}_{i\in \Lambda_1}\cup\{g_j\}_{j \in \Lambda_2}$.  
%$I =(i_1, i_2 \cdots i_m)\in \p$, $F =(j_1, j_2 \cdots j_m)\in \F$,  
%define \begin{align*} u(I) &= u(f_{i_1})u(f_{i_2})\cdots u(f_{i_m});\\ u(F) &=u(g_{j_1})u(g_{j_2})\cdots u(g_{j_m}).\end{align*} Then  it is well known that 
$\{u_R(I) u_R(F)\Omega: I \in \p, F\in \F\}$ forms an orthonormal basis for the GNS Hilbert space $H_R$, where \begin{align*} u(I) & = u(f_{i_1})u(f_{i_2})\cdots u(f_{i_m}); ~~u(F) =u(g_{j_1})u(g_{j_2})\cdots u(g_{j_m}).\end{align*}

%The proof of the following proposition is generalized from the proof of computation of super-product system of Clifford flows, even Clifford flows in \cite[Proposition 8.15]{MS-ar}. 

%The following proposition describes the super product system for Clifford flows. In the following proposition and elsewhere, an empty wedge product is interpreted as the vacuum vector $\Omega$.

\begin{prop} Let $\alpha^R$ and $ \beta^R$ be respectively the CAR flow and even CAR flow associated with $A=1\otimes R\in B(L^2(\R_+, \k)$. The super-product system associated with $\alpha^R$ and $\beta^R$  are both isomorphic to $E^{\k}_{2\N_0}$. 
\end{prop}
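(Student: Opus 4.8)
The plan is to identify the super-product system $H^{\alpha^R}$ with $E^{\k}_{2\N_0}$ by writing down an explicit unitary on each fibre and checking it intertwines the structure isometries. Since $A = 1\otimes R$, Proposition~\ref{Toeplitz and extension property} tells us the canonical unit $S=(S_t)$ is a multi-unit, so Lemma~\ref{EtMSt} applies and gives $H^{\alpha^R}_t = [\m_R S_t]\cap \alpha^R_t(\m_R)' S_t$. The first step is therefore to understand $\alpha^R_t(\m_R)'$ concretely: using the GNS picture on $\Gamma(K)\otimes\Gamma(K)$, one has $\m_R = \{u_R(f): f\in L^2((0,\infty),\k)\}''$ and $\alpha^R_t(\m_R)$ is the algebra generated by $u_R(T_t f)$, i.e. the ``shifted'' copy sitting over $(t,\infty)$ in both tensor legs. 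Its commutant, intersected with $[\m_R S_t]$, should turn out to be generated (as a module over the vacuum) by the operators built from $u_R(f)$ with $f$ supported in $(0,t)$ in \emph{both} Fock legs — this is exactly where the doubling, and hence the ``$\k\oplus\k$'' appearing in $E^{\k}_{2\N_0}$, comes from.

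The second step is to use the orthonormal basis constructed just before the proposition: $\{u_R(I)u_R(F)\Omega\}$ with $I$ indexing creation/annihilation data over $(0,t)$ and $F$ over $(t,\infty)$. Applying the Clifford-type CAR relations $u_R(f)u_R(g)+u_R(g)u_R(f)=\mathrm{Re}\ip{f,g}$ one sees that left multiplication by $u_R(I)$ (with $I$ over $(0,t)$) commutes with $\alpha^R_t(\m_R)$ up to a controlled sign coming from the parity of $|I|$; the elements of $H^{\alpha^R}_t$ are precisely finite linear combinations $\sum_I c_I\, u_R(I) S_t$ that reassemble into a \emph{single} operator in the commutant, and the bookkeeping of the Jordan–Wigner signs forces the total degree to be even. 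Concretely I would split the ``real'' Clifford generator $u_R(e_i\otimes\xi)$ into its creation and annihilation parts on the two Fock legs via $\pi_R(a(f)) = a(\sqrt{1-A}f)\otimes\Gamma(-1) + 1\otimes a^*(j\sqrt{A}f)$, so that a monomial of length $n$ over $(0,t)$ lands in $\bigoplus_{n_1+n_2=n} L^2((0,t),\k)^{\wedge n_1}\otimes L^2((0,t),\k)^{\wedge n_2}$, and the evenness constraint becomes exactly the condition $n_1+n_2\in 2\N_0$ defining $E^{\k}_{2\N_0}(t)$.

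The third step is to package this into an explicit unitary $V_t\colon H^{\alpha^R}_t\to E^{\k}_{2\N_0}(t)$ and verify $V_{s+t}U^{\alpha^R}_{s,t} = U^2_{s,t}(V_s\otimes V_t)$ and measurability; this is the same kind of computation as in the type~I product-system theorem proved earlier in the excerpt, and the map on basis vectors is the evident one sending $u_R(I)S_t$ to the corresponding antisymmetric tensor, with $S_t\mapsto\Omega_t\otimes\Omega_t$. For $\beta^R$ one repeats the argument with $\m_R^e$ in place of $\m_R$: the even algebra has a strictly larger commutant, but the extra commuting elements (the odd part over $(0,t)$) are killed by the intersection with $[\m_R^e S_t]$, which only sees even monomials — so that one lands on the same $E^{\k}_{2\N_0}(t)$.

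\textbf{The main obstacle} I anticipate is Step~2, namely proving that $H^{\alpha^R}_t$ is \emph{exactly} the even part and not something larger or smaller: the inclusion ``$\supseteq$'' is a direct verification that each even monomial $u_R(I)S_t$ lies in $\alpha^R_t(\m_R)'S_t$, but the reverse inclusion requires showing that an arbitrary element of $[\m_R S_t]\cap\alpha^R_t(\m_R)'S_t$ cannot have an odd component. This is where one must argue carefully with the Jordan–Wigner / parity grading — an odd element anticommutes with $\alpha^R_t(u_R(g))$ for $g$ over $(t,\infty)$ rather than commuting with it, so it cannot lie in the commutant; making this rigorous for weak-operator limits (not just finite sums) is the delicate point, and is handled using the density statement in Lemma~\ref{EtMSt} together with the orthonormal basis to control convergence coordinate-wise. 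Once the even-part identification is in hand, everything else is bookkeeping parallel to Example~\ref{subsemigroup example2} and the type~I theorem.
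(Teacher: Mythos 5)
Your treatment of $\alpha^R$ is essentially the paper's own proof: the containment $E^{\k}_{2\N_0}(t)\subseteq H^{\alpha^R}_t\Omega$ is obtained from the explicit intertwiners $\xi_t\mapsto T_{\xi_t}$ (in the paper these are checked against both $\alpha_t$ and $\alpha'_t$ using the modular conjugation, which is the precise form of your Step 1), and the reverse inclusion is exactly the combination you describe of Lemma \ref{EtMSt} with the orthonormal basis $\{u_R(I)u_R(F)\Omega:I\in\p,F\in\F\}$: conjugating $T$ by elements $u_R(F')\in\alpha_t(\m_R)$ produces the sign $\mu_{F'}(I,F)=(-1)^{|I||F'|+|F||F'|-|F\cap F'|}$, and uniqueness of the expansion forces $F=\emptyset$ and $|I|$ even; the weak-limit issue you flag is handled exactly by the closability statement in Lemma \ref{EtMSt}, as in the paper. (A cosmetic point: a length-$n$ monomial in the $u_R$'s applied to $\Omega$ lands in the particle spaces of total number $\leq n$ of the same parity as $n$, not exactly $n_1+n_2=n$; parity is all you need, so this does not affect the argument.)

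The genuine weak spot is your one-line treatment of $\beta^R$. It is not true that $[\m^e_R S_t]$ ``only sees even monomials'' in each leg: the GNS space $\overline{\m^e_R\Omega}$ is $E^{\k}_{2\N_0}(\infty)$, whose orthonormal basis contains the vectors $u_R(I)u_R(F)\Omega$ with $|I|$ and $|F|$ both odd, and the corresponding operators (odd over $(0,t)$ times odd over $(t,\infty)$, total degree even) do lie in $\m^e_R$, hence are visible in $[\m^e_R S_t]$. So these odd--odd components are not killed by the module constraint; they must be excluded by the commutant condition $T\in\alpha_t(\m^e_R)'$. Since single odd elements $u_R(g)$ with $g$ supported in $(t,\infty)$ are no longer available as conjugators (they are not in $\m^e_R$), the sign argument has to be rerun with even conjugators, e.g.\ products $u_R(g_1)u_R(g_2)$ over $(t,\infty)$ with one $g_i$ chosen inside $F$ and one outside, which again forces the coefficients of all terms with $F\neq\emptyset$ (in particular the odd--odd ones) to vanish. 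This is precisely the modification the paper invokes (the analogue of Proposition 8.16 of the arXiv version of \cite{MS1}); without it your argument for the even CAR flow does not close.
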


\begin{proof} Since $A$ is of the form $1 \otimes R$, notice that $$\overline{span}\{u_R(f_1)u_R(f_2) \cdots u_R(f_{2n})\Omega: f_i \in L^2((0,t), \k\} = E^{\k}_{2\N_0}(t).$$ For $\xi_t\in E^{\k}_{2\N_0}(t)$, define $T_{\xi_t}(m\Omega) = \alpha_t^R(m) \xi_t$. Then $T_{\xi_t}$ extends to a bounded operator on $H_R=\Gamma(K) \otimes \Gamma(K)$, and satisfies $T_{\xi_t}m = \alpha_t^R(m) T_{\xi_t}$ for all $m \in \m$ (indeed $\alpha_t^R(X) = U^2_{s,\infty}(1 \otimes S_t X S_t^*)(U^2_{s,\infty})^*$ and  $T_{\xi_t}(\xi) = U^2_{s,\infty}(\xi_t\otimes S_t\xi)$). The modular conjugation $J_\Omega$ on $\Gamma(K) \otimes \Gamma(K)$ is the anti-linear extension of $\xi_1\wedge\cdots \xi_n\otimes \eta_1\wedge\cdots \eta_m \mapsto  j\eta_m\wedge\cdots j\eta_1  \otimes   j\xi_n\wedge\cdots j\xi_1 $. It is easy to verify $T_{\xi_t}(m'\Omega) = {\alpha_t^R}'(m') \xi_t$, hence $T_{\xi_t} \in H^\alpha_t$, and the map $\xi_t\mapsto T_{\xi_t}$ is isometric. We prove surjectivity as follows.

%Let the operators $T_{\xi_t},T'_{\xi_t}$ be as defined in \ref{T}, \ref{T'}. Through our earlier discussion at the beginning of this section, it is easy to see,  we have $T_{\xi_t}=T'_{\xi_t}$ whenever $\xi_t \in H^e_t$. Consequently $T_{\xi_t} \in H^\alpha_t$, for all $\xi_t \in H^e_t$. We need to prove that  $H^\alpha_t$ contains only these elements. Observe $T_{\xi_t}\Omega=\xi_t$.

%Fix $t>0$. 
%%%From Proposition \ref{EtMSt} and \ref{Et'M'St} we have $$H_t^\alpha=E_t^\alpha\cap E^{\alpha'}_t=[\r S_t]\cap \alpha_t(\r)'S_t.$$
%Let $H^{\alpha^n}_t\ni A=TS_t$, with $T \in \alpha^n_t(\m)'$ be an arbitrary element, then $A\Omega=TS_t\Omega=T\Omega$.
Let $TS_t\in H^\alpha_t$, with $T \in \alpha_t(M)'$, and there exists a unique expansion \begin{equation*}\label{ONBexp}T\Omega= \sum_{I \in \p, J\in \F}\lambda(I,F)u_R(I)u_R(F)\Omega, ~~ \lambda(I,F) \in \C.\end{equation*} 
%By Lemma  \ref{EtMSt}, $T\Omega \in D(\rho_{u_R(F')})$ for all $F' \in \F$. Further the orthogonal sum $ \sum_{I \in \p, J\in \F}\lambda(I,F)u_R(I)u_R(F)u_R(F')\Omega$ converges, since $ \sum_{I \in \p, J\in \F}|\lambda(I,F)|^2$ is finite. Thanks to $\rho_{u_R(F')}$ being a closed operator we now conclude that $$ \rho_{u_R(F')}(T\Omega) =  \sum_{I \in \p, J\in \F}\lambda(I,F)u_R(I)u_R(F)u_R(F')\Omega.$$
%Notice that $\alpha^n_t(\r)=\{u(F'): F' \in \F\}''.$ So by lemma \ref{rho} we have \begin{equation}\label{rhou}
%\rho_{u(F')}(T\Omega) = T u(F')\Omega,~~ \forall~F' \in \F.
%\end{equation}
%Since $ T \in \alpha_t^R(\m)'$, 
%we now we use the relation $u(F')Tu(F')=T$ for all $F'\in \F$ as follows. 
For any $F' \in \F$, thanks to Lemma  \ref{EtMSt}, $T\Omega \in D(\rho_{u_R(F')})$ , and
\begin{align*}
T\Omega &=u_R(F')Tu_R(F')^*\Omega \\
& = u_R(F') \rho_{u_R(F')^*}(T\Omega)~ (\mbox{using Lemma}~\ref{EtMSt})\\
%&= u(F')\rho_{u(F')^*}\left(\sum_{I \in \p, F\in \F}\lambda(I,F)u(I)u(F)\Omega \right)~(\mbox{using}~\ref{ONBexp})\\
%&=u(F') \sum_{I \in \p, F\in \F}\lambda(I,F)\rho_{u(F')^*}\left(u(I)u(F)\Omega\right)\\
&= \sum_{I \in \p, F\in \F}\lambda(I,F)u_R(F') u_R(I)u_R(F)u_R(F')^*\Omega\\
&=  \sum_{I \in \p, F\in \F}\mu_{F'}(I,F) \lambda(I,F)u_R(I)u_R(F)\Omega,
\end{align*} where $\mu_{F'}(I,F)=(-1)^{\sigma_{F'}(I,F)}$ with $\sigma_{F'}(I,F)= |I||F'|+|F||F'|-|F\cap F'|.$ (In the third line we have used the fact that $\rho_{u_R(F')}$ is closed and the orthogonal sum converges.)   Since the expansion is unique 
%we must have $\sigma_{F'}(I,F)$ is even for all $F'\in \F$.  So 
we conclude $\lambda(I,F)=0$ except for the terms indexed by $(I,F)$ satisfying $|I|$ is even and $F$ is empty. Hence $T\Omega \in E^{\k}_{2\N_0}(t)$.

To compute the super product systems of the even CAR flows, observe that the GNS Hilbert spaces, $\overline{\{\m^e_R\Omega\}}$ can be identified with subspaces $E^{\k}_{2\N_0}(\infty)$. By restricting $T_{\xi_t}$ defined above, we conclude $E^{\k}_{2\N_0}(t) \subseteq H^\beta_t\Omega$.  To  prove the other inclusion, notice that the collection $\{u_R(I) u_R(F)\Omega: I \in \p, F\in \F\}$,  with either both $I$ and $F$ have even lengths or both have odd lengths, provides an ONB for  $E^{\k}_{2\N_0}(\infty)$. Similar arguments as for CAR flows above, (modified as in the proof of \cite[Proposition 8.16]{MS1-ar}) will complete the proof.
%Let $T \in H^\beta_t$, and $\xi_t = T\Omega\in \overline{\{\m^e_R\Omega\}}$.  We claim that $m\Omega \mapsto \alpha^R_t(m) \xi_t$ extends to a bounded operator on $H^R$. This in turn will prove that $\xi_t \in E^{\k}_{2\N_0}(t)$. To prove the claim it is enough if we prove that \begin{equation}\label{inner product}  \ip{\alpha_t(m)\xi_t, \xi_t} = \|\xi_t\|^2\ip{m\Omega, \Omega}~~ \forall m \in \m^R.\end{equation}  This is satisfied for $m \in \m^e_R$ since $T \in H^\beta_t$. On the other hand, for any word $m$ of odd length in $\pi_R(a(f)), \pi_R(a^*(g))$, both $\alpha_t(m) \xi_t$ and $m \Omega$ are orthogonal to $\overline{\{\m^e_R\Omega\}}$ and both sides in  Equation \ref{inner product} equal to zero.  Finally, let $T \in H^\gamma_t$ and $\xi_t = T\Omega\in \overline{\{\m^0_R\Omega\}}$. For any $m$ of the form $\pi_A(a(f_1)a(f_2)\cdots a(f_n)a^*(g_1)a^*(g_2)\cdots a^*(g_m))$ with $n\neq m$, both $\alpha_t(m) \xi_t$ and $m \Omega$ are orthogonal to $\overline{\{\m^0_R\Omega\}}$ and both sides in  Equation\ref{inner product} equal to zero. This proves $m\Omega \mapsto \alpha^R_t(m) \xi_t$ extends to a bounded operator on $H^R$ and hence $\xi_t \in E^{\k}_{2\N_0}(t)  \cap E^{\k}_0(\infty)  = E^{\k}_0(t)$. 
\end{proof}

 \section{2-cocycles}\label{2-addits computation}
 In this section we compute the $2-$addits for the Clifford super-product systems, the family of super-product systems $H^{\k}_{2\N_0}(t)$, %and $E^{ \k}_{2\N_0, 2}(t)$, 
 discussed in Example \ref{subsemigroup example}.   From this computation, we get back the dimension of the Hilbert space $\k\otimes \k$, as the invariant defined as $2-$index. This would prove that two super-product systems $H^{\k}_{2\N_0}(t)$ and $H^{\k'}_{2\N_0}(t)$ are isomorphic if and only if $\k$ and $\k'$ have same dimension. This also classifies  CAR super-product systems, the family of super-product systems  discussed in Example \ref{subsemigroup example2},  since $E^{ \k}_{2\N_0}(t)$ is isomorphic to $H^{\k\oplus \k}_{2\N_0}(t)$.
 
 %and Example \ref{subsemigroup example2}. 
%$\Gamma_s(K)$ can also be realized as function spaces in an exactly similar way, by replacing by antisymmetric tensors with symmetric tensors and removing the signature of the permutations appropriately.  
We identify $L^2([0,t],\k)\opower{n}$ with $L^2([0,t]^n,\k\opower{n})$ by the natural isomorphism. Notice that the subspace $L^2([0,t],\k)^{\wedge n}$ is the collection of functions $f\in L^2([0,t]^n,\k\opower{n})$ satisfying
 \begin{align}\label{flip} f(s_{\sigma(1)},\ldots s_{\sigma(n)})=\epsilon(\sigma) \Pi_\sigma f(s_1,\ldots,s_n),\end{align} for any permutation $\sigma \in S_n$, where $\Pi_\sigma$ is the corresponding tensor flip on $\k\opower{n}$.  We use this identification  and the following identification in all our computations. 

 Notice that $\R^n\ominus\{(x_1, x_2,\ldots , x_n): x_i\neq x_j~\forall i\neq j\}$ has full Lebesgue measure,  hence that in particular $f$ is completely determined by its value on the simplex $\Delta^n_t:=\{(s_1,\ldots,s_n)\in [0,t]^n:~s_1>\cdots>s_n\}$, and  $$\norm{f}^2=\int_{[0,t]^n}\norm{f(\mathbf{s})}^2 d\mathbf{s}=\sum_{\sigma\in\mathfrak{S}_n}\int_{\Delta^n_t}\norm{f(\mathbf{\sigma(\mathbf{s})})}^2d\mathbf{s}=n!\int_{\Delta^n_t}\norm{f(\mathbf{s})}^2d\mathbf{s}.$$ Thus the map $f\mapsto n!f|_{\Delta^n_t}$ induces a Hilbert space isomorphism between $L^2([0,t], \k)^{\wedge n}\to L^2(\Delta^n_t,\k\opower{n})$. 
%This identification is also compatible with right shifts, namely it intertwines the isometries $L^2([0,s], \k)^{\wedge m}\otimes L^2([0,t], \k)^{\wedge n}\to L^2([0,s+t], \k)^{\wedge n+m}$ given by $(f\otimes g)\mapsto \left((T_t)\opower{m}f\right)\wedge g$ with isometries $$(f\otimes g)(s_1,\ldots ,s_{n+m})=f(s_1-t,\ldots,s_m-t)\otimes g(s_{m+1},\ldots,s_{n+m}) $$ when $(s_1,\ldots,s_{n+m})\in [t,s+t]^m\times[0,s]^n$, and otherwise $$(f\otimes g)(s_1,\ldots ,s_{n+m})=0.$$
 This identification is also compatible with right shifts, namely it intertwines the isometries $L^2([0,s], \k)^{\wedge m}\otimes L^2([0,t], \k)^{\wedge n}\to L^2([0,s+t], \k)^{\wedge n+m}$ given by $$f\otimes g\mapsto \left((T_t)\opower{m}f\right)\wedge g$$ with isometries $V_{s,t}:L^2(\Delta^m_s, \k\opower{m})\otimes L^2(\Delta^n_t, \k\opower{n}) \to L^2(\Delta^{m+n}_t, \k\opower{n+m})$  given by $$V_{s,t}(f\otimes g)(s_1,\ldots ,s_{n+m})=f(s_1-t,\ldots,s_m-t)\otimes g(s_{m+1},\ldots,s_{n+m}) $$ when $(s_1,\ldots,s_{n+m})\in [t,s+t]^m\times[0,s]^n$, and otherwise $$V_{s,t}(f\otimes g)(s_1,\ldots ,s_{n+m})=0.$$
%%We provide the details of the computations for  $H^{a,\k}_{2\N_0}(t)$, $E^{a,\k}_{2\N_0, 2}(t)$, since they are relevant to Clifford flows on the hyperfinite type II$_1$ factors and CAR  flows on type III factors, which we call as Clifford and CAR super-product systems respectively. The computation for  $H^{s,\k}_{2\N_0}(t)$, $E^{s,\k}_{2\N_0, 2}(t)$ is exactly similar.
%These computations can be extended to the corresponding symmetric cases as well.
%Since $H^{a,\k}_{2\N_0}(t)\subseteq E^{a,\k}_{2\N_0, 2}(t)$, we set $H_t =E^{a,\k}_{2\N_0, 2}(t)$ and consider only this super product system.
%Notice that the embeddings $\iota_{s,t}$s are just the inclusion map and $\kappa_{s,t}$ is the inclusion followed by the second quantization $\Gamma(T_t)$(restricted to the image of $\iota_{s,t}$). Also 

Since the second quantization $\Gamma(T_t)$ leaves $n-$particle spaces invariant, projection of a $2-$addit onto an $n-$particle space is again a $2-$addit. So we are basically looking for elements  $\{a(s,t) : s,t \in (0, T)\} \subseteq L^2([0,T], \k)^{\wedge 2n},$
for arbitrarily fixed $n$, satisfying $$a(r,s+t)= a(r+s,t)+ a(r,s) -S_r a(s,t),~~~ \forall ~r,s,t  \in (0, r+s+t),$$ where $S_r$ is the restriction of $\Gamma(T_r)$. We continue to call them as $2-$cocycles. Further appropriate orthogonality conditions should be satisfied for the them to be defective.  Though we are interested  only in the anti-symmetric  case, we prove it in a more general case, which may be useful later.

\begin{lem}\label{2-addit 2particle}
For a defective $2-$cocycle $a(s,t) \in L^2([0,s+t)], \k)\opower{2}$
there exists $f_1 , f_2\in L^2_{\loc}(\R_+;\k\opower{2})$ such that
   $$a(s,t)(x,y)=1_{[s,s+t]\times [0,s]}(x,y) f_1(x-y)+1_{[0,s]\times [s,s+t]}(x,y) f_2(y-x),
  ~~ \forall ~s,t,x, y \geq 0.$$
\end{lem}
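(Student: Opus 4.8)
I want to solve the functional equation
\[
a(r,s+t)=a(r+s,t)+a(r,s)-S_r\,a(s,t)
\]
for two-particle vectors $a(s,t)\in L^2([0,s+t],\k)^{\otimes 2}$, where $S_r=\Gamma(T_r)$ restricted to the two-particle space acts by $(S_r g)(x,y)=1_{[r,\infty)^2}(x,y)\,g(x-r,y-r)$, subject to the defectivity condition that $a(s,t)\perp U_{s,t}(H_s\otimes H_t)$. The first move is to unpack what defectivity says at the two-particle level. The image $U_{s,t}(H_s\otimes H_t)$ inside $L^2([0,s+t],\k)^{\wedge 2}$ is spanned by $L^2([0,s],\k)^{\wedge 2}\oplus L^2([s,s+t],\k)^{\wedge 2}$ together with the "mixed" vectors coming from one particle in $[0,s]$ and one in $[s,s+t]$ that factor as a product $\xi\wedge\eta$ with $\xi$ supported on $[0,s]$ and $\eta$ on $[s,s+t]$. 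So I would first argue, using the $0\in G$ component, that $a(s,t)$ has no component in $L^2([0,s],\k)^{\otimes 2}$ (put $t$ small / use the canonical unit legs) and no component in $L^2([s,s+t],\k)^{\otimes 2}$; hence $a(s,t)$ is supported, as a function on $[0,s+t]^2$, on the two "off-diagonal" rectangles $R^+_{s,t}=[s,s+t]\times[0,s]$ and $R^-_{s,t}=[0,s]\times[s,s+t]$. Write $a(s,t)=1_{R^+_{s,t}}\,\phi_{s,t}(x,y)+1_{R^-_{s,t}}\,\psi_{s,t}(x,y)$.

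**Reducing to a one-variable function.** Next I would exploit the cocycle identity to pin down the dependence of $\phi_{s,t}$ and $\psi_{s,t}$ on $(s,t)$ and on $(x,y)$. The key observation is that the three rectangles appearing when one expands $a(r,s+t)$, $a(r+s,t)$, $a(r,s)$, $S_r a(s,t)$ tile the relevant region, and comparing the identity on each sub-rectangle forces: (a) on the region where only one side of the identity is "active", $\phi$ and $\psi$ must be \emph{independent of the interval-length parameters} — i.e. $\phi_{s,t}(x,y)$ depends only on $(x,y)$ in a way that is consistent across different $(s,t)$, so there are globally defined $F^+,F^-\in L^2_{\loc}$ with $\phi_{s,t}=F^+|_{R^+_{s,t}}$, $\psi_{s,t}=F^-|_{R^-_{s,t}}$; and (b) translation covariance under $S_r$ (the term $S_r a(s,t)$ shifts $(x,y)\mapsto(x+r,y+r)$) forces $F^+(x,y)$ and $F^-(x,y)$ to depend only on the difference $x-y$. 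This is the standard "additive cocycle $\Rightarrow$ it's a multiple of the parameter, and translation-invariant $\Rightarrow$ it's a function of the difference" phenomenon, and it is essentially the same computation that turns a $1$-addit into a constant-coefficient one-particle function (as in \cite[Lemma 7.1]{MS1}), now carried out one dimension up. Setting $f_1(u)=F^+$ evaluated with $x-y=u$ and $f_2(u)=F^-$ evaluated with $y-x=u$ then gives exactly the claimed form
\[
a(s,t)(x,y)=1_{[s,s+t]\times[0,s]}(x,y)\,f_1(x-y)+1_{[0,s]\times[s,s+t]}(x,y)\,f_2(y-x).
\]

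**Order of the steps and the main obstacle.** Concretely I would: (1) identify $U_{s,t}(H_s\otimes H_t)$ at the two-particle level and derive the support statement for $a(s,t)$; (2) plug the support decomposition into the cocycle equation and, by evaluating on carefully chosen sub-rectangles (where at most one or two of the four terms are supported), deduce that $\phi_{s,t}$, $\psi_{s,t}$ glue to global functions $F^{\pm}$ on $\{x>y\}$ resp.\ $\{x<y\}$; (3) use the appearance of the shift $S_r$ (with the constraint that the shifted argument lands in the correct rectangle) to force difference-invariance, producing $f_1,f_2$; (4) check $f_1,f_2\in L^2_{\loc}$ by restricting to a fixed $[0,T]$ and using that $a(s,t)$ is square-integrable there. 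The main obstacle is step (2)–(3): the bookkeeping of which of the rectangles $R^+_{r,s+t}$, $R^+_{r+s,t}$, $R^+_{r,s}$, and the $S_r$-image of $R^+_{s,t}$ overlap, and in what combination the equation holds on each piece, is delicate — one has to choose the test regions so that the identity collapses to something like $\phi_{r,s+t}=\phi_{r+s,t}$ on an overlap (forcing length-independence) and $\phi_{r,s+t}(x,y)=\phi_{s,t}(x-r,y-r)$ on another (forcing difference-invariance), and one must be careful about null sets and about the diagonal $x=y$, which has measure zero but needs to be handled when invoking the antisymmetry relation \eqref{flip}. Once the rectangle combinatorics are set up correctly, the extraction of $f_1,f_2$ is routine.
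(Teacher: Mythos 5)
Your plan follows the paper's proof essentially verbatim: defectivity localizes the support of $a(s,t)$ to the two off-diagonal rectangles $[s,s+t]\times[0,s]$ and $[0,s]\times[s,s+t]$, the cocycle identity compared on suitably chosen sub-rectangles glues the pieces across different $(s,t)$ into global functions on the half-planes, and the shifted term $S_r a(s,t)$ forces translation invariance along the diagonal, hence dependence only on $x-y$; this is exactly the sequence of steps in the paper. One small caveat: in the relevant setting $U_{s,t}(H_s\otimes H_t)$ meets the two-particle space only in the two diagonal blocks $L^2([0,s]^2)\oplus L^2([s,s+t]^2)$ — the ``mixed'' one-particle-in-each-interval vectors you mention are not in the image (if they were, defectivity would force $a(s,t)=0$ and trivialize the lemma) — but since your argument only uses orthogonality to the diagonal blocks, this does not affect its validity.
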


\begin{proof}
The $2-$cocycle $\{a(s,t): s, t, \geq 0\}$ is defective means $$a(s,t) \perp U_{s,t} (H_s\otimes H_t) = L^2(\left(([0,s]\times [0,s])\bigcup ([s, s+t]\times [s, s+t])\right), \k\opower{2}),$$  for all $s,t\geq 0$.
So the support of $a(s,t)$ must be contained in $([s,s+t]\times [0,s])\cup([0,s]\times [s,s+t])$. Now, it follows from the relation
   $$a(r,s+t)=a(r+s,t)+a(r,s)- S_ra(s,t),$$
by looking at the various supports, 
   that 
  \begin{align}\label{ast1} a(r,s+t)|_{[r,r+s]\times[0,r]} & = a(r,s)|_{[r,r+s]\times[0,r]}\\ \nonumber  a(r+s,t)|_{[r+s,r+s+t]\times[0,r]} & = a(r,s+t)|_{[r+s,r+s+t]\times[0,r]},\end{align} 
 %  \begin{align}\label{ast1} a(r,s+t)|_{([r,r+s]\times[0,r]) \cup ([0,r]\times [r, r+s])} & = a(r,s)|_{([r,r+s]\times[0,r])\cup ([0,r]\times [r, r+s])}\\ \nonumber  a(r+s,t)|_{[r+s,r+s+t]\times[0,r]} & = a(r,s+t)|_{[r+s,r+s+t]\times[0,r]},\end{align} 
   for all $r,s,t \geq 0$, and also that the same first and second relations hold  true as well in the region $[0,r]\times [r, r+s]$ and $[0,r]\times [r+s,r+s+t]$ respectively. 
  % \begin{align}\label{ast2} a(r,s+t)|_{[0,r]\times [r, r+s]} & = a(r,s)|_{[0,r]\times [r, r+s]}\\ \nonumber a(r+s,t)|_{[0,r]\times [r+s,r+s+t]} & = a(r,s+t)|_{[0,r]\times [r+s,r+s+t]},\end{align} for all $r,s,t \geq 0$. 
   These equations separately imply the existence of  $g_1, g_2\in L^2_{\loc}(\Delta^2_\infty,\k\opower{2})$ which together satisfies
   $$a(s,t)(x,y)=1_{[s,s+t]\times[0,s]}(x,y)g_1(x,y)+ 1_{[0,s]\times [s,s+t]}(x,y)  g_2(y,x),$$ for all $s,t\geq 0$. 
   %where $\Pi_{\k\opower{2}}$ is the usual tensor-flip.
    
   We claim that $g_i(x+s,y+s)=g_i(x,y)$ for all $x,y,s>0$, $i=1,2$; hence $g_1(x,y)=f_1(x-y)$  and $g_2(x,y)=f_2(y-x)$ for some $f_1, f_2\in L^2_{\loc}(\R_+,\k\opower{2})$. Indeed, by picking $(x,y)\in [r+s,r+s+t]\times [r,r+s]$, and evaluating the functions in the following equality in this region
   $$a(r+s,t)+a(r,s)=a(r,s+t)+S_ra(s,t)$$
   at $(x,y)$, one finds that $g_1(x,y)=g_1(x-r,y-r)$, as required. By picking $(x,y)\in [r,r+s]\times [r+s,r+s+t]$, we get the relation for $g_2$ and proof is over.
\end{proof}

%   The form of the defective 2-cocycles given in the theorem now follows by an application of the canonical isomorphism $L^2(\Delta_t^2;\k\opower{2})\to L^2([0,t];\k)^{\wedge2}$.

We write  $A\leftthreetimes B$, for the simplex $\{(s,t):~s\in A,~t\in B,~s>t\}$. Similarly, $A^{\leftthreetimes n}$ will be used to denote the $A\leftthreetimes A\cdots\leftthreetimes A$, where there are $n$ copies of $A$ in the product

\begin{prop}\label{2-addit Clifford} Let $\{a_{s,t}: s, t \geq 0 \}$ be a defective $2-$addit for $(H^{\k}_{2\N_0}(t), U_{s,t})$. Then $a_{s,t} \in L^2([0,s+t)], \k)^{\wedge2}$.
%\subseteq H^{a, \k}_{2\N_0}(s+t)$. 
Further
there exists $f\in L^2_{\loc}(\R_+,\k\opower{2})$ such that
   $$ a_{s,t}(x,y)=1_{[s,s+t]\times [0,s]}(x,y) f(x-y)-1_{[0,s]\times [s,s+t]}(x,y) \Pi_{\k\opower{2}} f(y-x),$$
   for all $s,t,x,y\in (0,\infty)$, where $\Pi_{\k\opower{2}}$ is the usual tensor-flip.
\end{prop}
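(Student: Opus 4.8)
The plan is to reduce Proposition~\ref{2-addit Clifford} to Lemma~\ref{2-addit 2particle} by first pinning down the number of particles, and then imposing the antisymmetry constraint~\eqref{flip}. First I would argue that a defective $2$-addit for $(H^{\k}_{2\N_0}(t), U_{s,t})$ is automatically supported in the two-particle subspace $L^2([0,s+t],\k)^{\wedge 2}$. Indeed, projecting onto the $2n$-particle space $L^2([0,T],\k)^{\wedge 2n}$ commutes with every $S_r$ (since $\Gamma(T_r)$ preserves particle number), so each such projection is again a defective $2$-cocycle in the sense used before Lemma~\ref{2-addit 2particle}; hence it suffices to rule out $n\geq 2$. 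For $n\geq 2$ the defectiveness condition $a_{s,t}\perp U_{s,t}(H_s^{\wedge 2n}\otimes H_t^{\wedge 2n})$ forces the support of $a_{s,t}$, viewed in $L^2([0,t]^{2n})$, to avoid both the cube $[0,s]^{2n}$ and $[s,s+t]^{2n}$; combining this with the cocycle identity $a(r,s+t)=a(r+s,t)+a(r,s)-S_r a(s,t)$ and the simplex-restriction argument of the lemma (now on $[0,t]^{2n}$ rather than $[0,t]^2$), one finds that $a_{s,t}$ would have to be supported on a set of the form ``exactly one coordinate in $[s,s+t]$, the rest in $[0,s]$'' (or its reflections) --- but the $\wedge 2n$-antisymmetry~\eqref{flip} among the coordinates lying in $[0,s]$, together with the translation-invariance in the style of the lemma's $g_i(x+s,y+s)=g_i(x,y)$ computation, forces the relevant profile function to be both symmetric and antisymmetric in those variables when $n\geq 2$, hence zero. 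So only $n=1$ survives, giving $a_{s,t}\in L^2([0,s+t],\k)^{\wedge 2}$.

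With the two-particle conclusion in hand, Lemma~\ref{2-addit 2particle} applies directly: there exist $f_1,f_2\in L^2_{\loc}(\R_+;\k\opower 2)$ with
$$a_{s,t}(x,y)=1_{[s,s+t]\times[0,s]}(x,y)\,f_1(x-y)+1_{[0,s]\times[s,s+t]}(x,y)\,f_2(y-x).$$
Now I would impose the defining antisymmetry of $L^2([0,t],\k)^{\wedge2}$, namely $a_{s,t}(y,x)=\Pi_{\k\opower 2}\,a_{s,t}(x,y)$ (the $n=2$ instance of~\eqref{flip} with the transposition $\sigma=(1\,2)$). Swapping $x\leftrightarrow y$ in the displayed formula interchanges the two indicator functions, so comparing the coefficient of $1_{[s,s+t]\times[0,s]}$ on both sides yields $f_2(x-y)=-\Pi_{\k\opower 2}f_1(x-y)$, i.e.\ $f_2=-\Pi_{\k\opower 2}f_1$. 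Setting $f:=f_1$ gives exactly
$$a_{s,t}(x,y)=1_{[s,s+t]\times[0,s]}(x,y)\,f(x-y)-1_{[0,s]\times[s,s+t]}(x,y)\,\Pi_{\k\opower 2}f(y-x),$$
as claimed. One should also note the converse-type check that any such $f$ does produce a genuine defective $2$-addit, but that is a routine verification of axioms (ii) of the $2$-addit definition plus the orthogonality condition, both visibly satisfied by the indicator-supported ansatz, so I would only mention it.

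The main obstacle is the first step --- showing that higher particle numbers contribute nothing. The bookkeeping of supports on $[0,t]^{2n}$ under the cocycle relation is more delicate than in the $2$-particle case of Lemma~\ref{2-addit 2particle}, because there are more ways for the support to distribute among the sub-cubes, and one has to use the full strength of the $\wedge 2n$-antisymmetry~\eqref{flip} (not just a single transposition) to collapse the ``one coordinate large, the rest small'' profile to zero; in particular one must be careful that the $\Pi_\sigma$ tensor-flips in~\eqref{flip} are tracked correctly so that the symmetry/antisymmetry clash is genuine. Once that vanishing is established, everything else is the mechanical application of Lemma~\ref{2-addit 2particle} and a single antisymmetry relation.
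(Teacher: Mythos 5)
Your final step is fine and agrees with the paper: once one knows $a_{s,t}$ lives in the two-particle space, Lemma \ref{2-addit 2particle} plus the single transposition instance of (\ref{flip}) gives $f_2=-\Pi_{\k\opower{2}}f_1$ and the stated form. The genuine gap is in the first step, the vanishing of the $2N$-particle components for $N\geq2$, which is where the paper spends essentially all of its effort. First, you misstate what defectiveness gives: orthogonality to $U_{s,t}(H_s\otimes H_t)$ with $H_s=\bigoplus_k L^2([0,s],\k)^{\wedge 2k}$ forces the support to avoid \emph{every} product $[s,s+t]^{\leftthreetimes 2k}\times[0,s]^{\leftthreetimes 2(N-k)}$ (an even number of coordinates in each interval), not merely the two cubes $[0,s]^{2n}$ and $[s,s+t]^{2n}$; with only the latter, configurations with, say, two coordinates in $[s,s+t]$ are not excluded and your reduction to ``exactly one coordinate up'' cannot even begin. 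Second, even with the correct constraint, passing from ``odd number of coordinates in each interval'' to ``first coordinate in $[s,s+t]$, all others in $[0,s]$'' is the substantial combinatorial part: the paper achieves it by projecting out the first and last coordinates, intersecting the support constraints from the cocycle identity over all decompositions $s+t=u$, and using that the sets $C(n,r,s,t)$ are pairwise disjoint in $n$. Your ``combining this with the cocycle identity and the simplex-restriction argument'' does not supply this reduction.

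More seriously, the mechanism you propose for the final vanishing is not valid. All functions here are determined by their restrictions to the simplex $\Delta^{2N}_t$, where antisymmetry imposes no constraint whatsoever (this is precisely why the paper works on the simplex), so there is no ``symmetric versus antisymmetric'' clash to exploit; a nonzero profile depending only on successive differences, antisymmetrized afterwards, is perfectly consistent with translation invariance. Indeed the paper's argument never uses antisymmetry at this stage and would apply verbatim in the symmetric case. What actually kills the surviving ``one coordinate up, the rest in $[0,r]$'' profile is a measure-theoretic squeeze: iterating the cocycle identity with dyadic splittings, $a(r,u)=a(\frac r2,\frac r2+u)+a(\frac r2,\frac r2)-(T_{r/2})^{\otimes 2N}a(\frac r2,u)$, shows the projected support $\Sigma_0(r,u)$ is contained in $\bigcup_{k}[\frac{kr}{2^n},\frac{(k+1)r}{2^n}]^{\leftthreetimes 2(N-1)}$ for every $n$, hence has measure zero exactly because $2(N-1)\geq2$. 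Without this (or an equivalent) argument, the core claim $a_{s,t}\in L^2([0,s+t],\k)^{\wedge 2}$ remains unproven.
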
 

\begin{proof}
For $S \subseteq [0,s+t]$, if $P_S$ is the projection in $B(L^2([0,s+t)], \k)\opower{2})$ onto $L^2(S, \k)\opower{2}$, then $P_S\left(L^2([0,s+t, \k)^{\wedge2}\right) =L^2(S, \k)^{\wedge 2}$. Using this, it is easy to verify that a defective $2-$cocycle in $L^2([0,s+t)], \k)^{\wedge2}$ continues to be a defective  cocycle in $L^2([0,s+t)], \k)\opower{2}$. By restricting the case of Lemma \ref{2-addit 2particle} to the antisymmetric subspace, it is immediate from Equation \ref{flip}, that any $2-$addit $a_{s,t}\in L^2([0,s+t)], \k)^{\wedge2}$ is of the form mentioned in the proposition. So we only have to show that there are no defective $2-$cocycles in the $2N$-particle space when $N>1$.

 Pick an integer $N>1$, and let $a(s,t)$ be a defective 2-cocycle in the $2N$-particle subspace $L^2([0,s+t)], \k)^{\wedge 2N}$.  Since $a(s,t)$ is defective, 
 %$\perp \iota_{s+t}U_{s,t}( E^\k_{2\N}(s)\otimesimes E^\k_{2\N}(t))$ for all $s,t>0$, 
  we must have that $a(s,t)$ is orthogonal to 
 % $\bigoplus_{n=0}^{N} L^2([s,s+t]^{\leftthreetimes 2n}, \k\opower{2n})\otimes L^2([0,s]^{\leftthreetimes 2(N-n)}, \k\opower{2(N-n)})$ which is same as  $L^2\left( \bigcup_{n=0}^N [s,s+t]^{\leftthreetimes 2n}\times [0,s]^{\leftthreetimes{2(N-n)}} , \k\opower{2N}\right).$
   \begin{align*} &\bigoplus_{n=0}^{N} L^2([s,s+t]^{\leftthreetimes 2n}, \k\opower{2n})\otimes L^2([0,s]^{\leftthreetimes 2(N-n)}, \k\opower{2(N-n)})\\=& L^2\left( \bigcup_{n=0}^N [s,s+t]^{\leftthreetimes 2n}\times [0,s]^{\leftthreetimes{2(N-n)}} , \k\opower{2N}\right) \end{align*}
where for succinctness, we have abused notation in the cases $n=0,2N$.
   %When $n=0$ the component is absent in the product.
Thus, up to a set of measure zero, the support $\Sigma(s,t)$ of $a(s,t)$ satisfies \begin{align*} \Sigma(s,t)&\subseteq\bigcup_{n=1}^N{[s,s+t]^{\leftthreetimes{2n-1}}\times [0,s]^{\leftthreetimes(2(N-n)+1)}}\\ &=[s,s+t]\leftthreetimes\left(\bigcup_{n=1}^N [s,s+t]^{\leftthreetimes2(n-1)}\times[0,s]^{\leftthreetimes2(N-n)}\right)\leftthreetimes[0,s]. \end{align*} 

Let $\Sigma_0(s,t)$ be the collection of points obtained by ignoring the first and last coordinates of points in $\Sigma(s,t)$, i.e.
   $$\Sigma_0(s,t):=\{(s_2,~\ldots,s_{N-1}):~\exists_{s_1,s_N>0}~(s_1,s_2,\ldots,s_{N-1},s_N)\in \Sigma(s,t)\}.$$ We will show that $\Sigma_0(s,t)$ has measure zero. We have $\Sigma_0(s,t)\subseteq A(s,t)$ where we set
   $$A(s,t):=\bigcup_{n=1}^N [s,s+t]^{\leftthreetimes2(n-1)}\times[0,s]^{\leftthreetimes2(N-n)}.$$
   The cocycle identity,
   $a(r,s+t)= a(r+s,t) +a(r,s) -S_r a(s,t),~~\forall r,s,t >0$
  asserts that
   $$\Sigma_0(r,s+t)\subseteq A(r,s+t)\cap \big((A(s,t)+r)\cup A(r+s,t)\cup A(r,s)\big).$$

Now we set \begin{align} B(r,s,t) &:=A(r,s+t)\cap A(r+s,t) \nonumber\\
   &=\bigcup_{n=1}^N [r,r+s+t]^{\leftthreetimes2(n-1)}\times[0,r]^{\leftthreetimes2(N-n)} \nonumber\\
   &\phantom{=}\cap \bigcup_{n=1}^N [r+s,r+s+t]^{\leftthreetimes2(n-1)}\times[0,r+s]^{\leftthreetimes2(N-n)} \nonumber\\
    &=\bigcup_{n=1}^N\bigcup_{k=1}^n{ [r+s,r+s+t]^{\leftthreetimes2(k-1)}\times[r,r+s]^{\leftthreetimes2(n-k)}\times[0,r]^{\leftthreetimes2(N-n)}}. \nonumber
   \end{align}
 Further notice that
$$  (A(s,t)+r)= \bigcup_{n=1}^N [r+s,r+s+t]^{\leftthreetimes2(n-1)}\times[r,r+s]^{\leftthreetimes2(N-n)}  \subseteq A(r+s,t),$$ 
%$$ A(r,s)= \bigcup_{n=1}^N [r,r+s]^{\leftthreetimes2(n-1)}\times[0,r]^{\leftthreetimes2(N-n)}\subseteq A(r,s+t), $$   
 $$ A(r,s+t) \cap  A(r,s)  = A(r,s) \subseteq  B(r,s,t),    $$
 %\begin{align*} (A(s,t)+r)= \bigcup_{n=1}^N [r+s,r+s+t]^{\leftthreetimes2(n-1)}\times[r,r+s]^{\leftthreetimes2(N-n)} \subset A(r+s,t), \end{align*}
 %\begin{align*} A(r,s)&= \bigcup_{n=1}^N [r,r+s]^{\leftthreetimes2(n-1)}\times[0,r]^{\leftthreetimes2(N-n)}\subset A(r,s+t), \end{align*}
   %\begin{align*} A(r,s+t) \cap  A(r,s) & = A(r,s)  \subset B(r,s,t), \end{align*}
    so we must have $\Sigma_0(r,s+t)\subseteq B(r,s,t)$. Since this holds for all $r,s,t>0$, we have
   $$\Sigma_0(r,u)\subseteq~\cap\{B(r,s,t):~s,t\in(0,\infty),~s+t=u\}.$$
   
   Now if we set
   $$C(n,r,s,t):=\bigcup_{k=1}^n{ [r+s,r+s+t]^{\leftthreetimes2(k-1)}\times[r,r+s]^{\leftthreetimes2(n-k)}\times[0,r]^{\leftthreetimes2(N-n)}}$$
   then, by looking at the components of $[0,r]$, note for any $s_1,s_2,t_1,t_2\in (0,\infty)$ with $s_1+t_1=u=s_2+t_2$, that
   $$C(n,r,s_1,t_1)\cap C(m,r,s_2,t_2)=\emptyset \quad\text{if}\quad n\neq m,$$
   so $\Sigma_0(r,u)$ is contained in
   $$\bigcup_{n=1}^N{\cap\{C(n,r,s,t):~s,t\in(0,\infty),~s+t=u\}}.$$
   But for any $n=2,\ldots,N$ and any $s_1>s_2>\ldots>s_{2N-2}$ such that $(s_1,\ldots,s_{2N-2})\in C(n,r,s,t)$, we have
   $s_1\notin[r,\frac{(s_1+s_2)}{2}] \quad \text{and}\quad s_2\notin [\frac{(s_1+s_2)}{2},r+s+t],$
   so that
   $$(s_1,\ldots,s_{2N-2})\notin C(n,\,r,\,\frac{(s_{1}+s_{2})}{2}-r,\,r+s+t-\frac{(s_{1}+s_{2})}{2});$$
   so  it follows that $\Sigma_0(r,u)\subseteq [0,r]^{\leftthreetimes2(N-1)}$.
   
   We claim that $\Sigma_0(r,u)\subseteq \bigcup_{k=0}^{2^n-1}[\frac{kr}{2^n},\frac{(k+1)r}{2^n}]^{\leftthreetimes2(N-1)}$ for all $n\in \N_0$. We prove the claim by induction. The claim is true when $n=0$. Assuming the claim for $n$ and using
   $$a(r,u)=a(\frac{r}{2},\frac{r}{2}+u)+a(\frac{r}{2},\frac{r}{2})-(T_{\frac{r}{2}})^{2N}a(\frac{r}{2},u),$$
   we obtain $\Sigma_0(r,u)\subseteq \bigcup_{k=0}^{2^{n+1}-1}[\frac{kr}{2^{n+1}},\frac{(k+1)r}{2^{n+1}}]^{\leftthreetimes2(N-1)}$. So $\Sigma_0(r,u)$ must be a set of measure zero.
\end{proof}

The proof of the following Proposition follows  form Proposition \ref{2-addit Clifford} and Lemma \ref{2-addit 2particle}, using the isomorphism between $E^{ \k}_{2\N_0}(t)$ and  $H^{\k\oplus \k}_{2\N_0}(t)$.

\begin{prop}\label{2-addit CAR} Let $\{a_{s,t}: s, t \geq 0 \}$ be a defective $2-$addit for $(E^{ \k}_{2\N_0}(t), U_{s,t})$. Then $a_{s,t}= (a^{1}_{s,t}\otimes \Omega_2) + (\Omega_1\otimes a^{2}_{s,t})+a^{0}_{s,t},$ with  $a^{1}, a^{2}  \in  L^2([0,s+t)], \k)^{\wedge2}$  and $a_{s,t}^{0} \in  L^2([0,s+t)], \k)\otimes L^2([0,s+t)], \k)$, where $\Omega_1$ and $\Omega_2$ are vacuum vectors of the first and second Fock spaces respectively.
%\begin{align*} a^1_{s,t} & \in  L^2([0,s+t)], \k)^{\wedge2}\otimes \Omega_1\\  a_{s,t}^2 & \in \Omega_1 \otimes L^2([0,s+t)], \k)^{\wedge2}\\ a_{s,t}^{12} \in  & L^2([0,s+t)], \k)\otimes L^2([0,s+t)], \k),\end{align*} 
Further
there exist $f^{1}, f^{2}, f_1, f_2 \in L^2_{\loc}(\R_+,\k\opower{2})$ such that
   $$ a^{i}_{s,t}(x,y)=1_{[s,s+t]\times [0,s]}(x,y) f^{i}(x-y)-1_{[0,s]\times [s,s+t]}(x,y) \Pi_{\k\opower{2}} f^{i}(y-x),~~ i=1,2 $$
$$ a^{12}_{s,t}(x,y)=1_{[s,s+t]\times [0,s]}(x,y) f_1(x-y) + 1_{[0,s]\times [s,s+t]}(x,y) f_2(y-x),$$ for all $s,t,x,y\in (0,\infty)$, $i=1,2$.
\end{prop}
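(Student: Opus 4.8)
The plan is to deduce this result from the structure theorem for the Clifford case (Proposition~\ref{2-addit Clifford}) together with the two-particle computation (Lemma~\ref{2-addit 2particle}), transported across the isomorphism
$$(E^{\k}_{2\N_0}(t),U^2_{s,t})\cong (H^{\k\oplus\k}_{2\N_0}(t),U_{s,t})$$
established in Example~\ref{subsemigroup example2}. First I would fix the identification $\Gamma(L^2([0,t],\k))\otimes\Gamma(L^2([0,t],\k))\cong\Gamma(L^2([0,t],\k\oplus\k))$ coming from $\Gamma(K_1)\otimes\Gamma(K_2)\cong\Gamma(K_1\oplus K_2)$, under which $E^{\k}_{2\N_0}(t)$ is carried onto $H^{\k\oplus\k}_{2\N_0}(t)$; note this intertwines the right-shift isometries $U^2_{s,t}$ with the $U_{s,t}$ for the single-Hilbert-space construction over $\k\oplus\k$, and it takes the canonical unit $\Omega_1\otimes\Omega_2$ to the vacuum. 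A defective $2$-addit on the left therefore corresponds exactly to a defective $2$-addit on the right, so by Proposition~\ref{2-addit Clifford} (applied with $\k$ replaced by $\k\oplus\k$) it must lie in the $2$-particle subspace $L^2([0,s+t],\k\oplus\k)^{\wedge 2}$ and is governed by a single $g\in L^2_{\loc}(\R_+,(\k\oplus\k)^{\otimes 2})$ via the antisymmetric formula there.

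The second step is purely bookkeeping: decompose the $2$-particle space over $\k\oplus\k$ according to
$$(\k\oplus\k)^{\wedge 2}\;\cong\;(\k^{\wedge2})\oplus(\k^{\wedge2})\oplus(\k\otimes\k),$$
i.e. $\Lambda^2(\k\oplus\k)\cong\Lambda^2\k\oplus\Lambda^2\k\oplus(\k\otimes\k)$, where the first two summands are the ``first Fock'' and ``second Fock'' antisymmetric two-particle spaces and the third is the mixed piece (one particle in each copy). Pulling this back through the Fock isomorphism gives precisely the stated splitting $a_{s,t}=(a^{1}_{s,t}\otimes\Omega_2)+(\Omega_1\otimes a^{2}_{s,t})+a^{0}_{s,t}$, with $a^1,a^2\in L^2([0,s+t],\k)^{\wedge2}$ and $a^0_{s,t}\in L^2([0,s+t],\k)\otimes L^2([0,s+t],\k)$. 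Since each of the three summands is invariant under $\Gamma(T_r)\oplus$(appropriate shifts) and under the projections $P_S$, the $2$-addit equation decouples into three independent $2$-addit equations — two in the antisymmetric two-particle spaces and one in the mixed space. Applying Proposition~\ref{2-addit Clifford} to each of $a^1,a^2$ yields the antisymmetric formulas with kernels $f^1,f^2\in L^2_{\loc}(\R_+,\k^{\otimes2})$; applying Lemma~\ref{2-addit 2particle} (the general, not-necessarily-antisymmetric two-particle case) to $a^0=a^{12}$ yields the formula with kernels $f_1,f_2\in L^2_{\loc}(\R_+,\k^{\otimes2})$.

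The only genuinely delicate point is checking that the mixed summand $\k\otimes\k$ really is the setting of Lemma~\ref{2-addit 2particle} rather than requiring a fresh higher-particle argument: one must verify that, in the $\k\oplus\k$ picture, a two-particle vector with ``one leg in each copy'' is exactly a general (not antisymmetrised) element of $L^2([0,s+t],\k)^{\otimes2}$, and that the shift acts on it as $(T_r)^{\otimes2}$ componentwise, so that Lemma~\ref{2-addit 2particle} applies verbatim with $\k$ in place of $\K\otimes\K$ there. This is a direct check using the natural isomorphism $\Lambda^2(\k\oplus\k)\to\Lambda^2\k\oplus\Lambda^2\k\oplus(\k\otimes\k)$ and the fact that the Clifford semigroup $2\N_0$ constraint, under the Fock isomorphism, reads ``$n_1+n_2$ even'' — which is automatically satisfied in the $2$-particle sector by $(2,0)$, $(0,2)$, and $(1,1)$. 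I do not expect any new obstacle beyond this identification; the measure-zero support argument that did the real work has already been carried out in Proposition~\ref{2-addit Clifford}, and here it is merely being invoked with a larger coefficient Hilbert space.
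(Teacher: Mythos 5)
Your proposal is correct and follows exactly the route the paper intends: it transports the problem through the isomorphism $(E^{\k}_{2\N_0}(t),U^2_{s,t})\cong (H^{\k\oplus\k}_{2\N_0}(t),U_{s,t})$, invokes Proposition \ref{2-addit Clifford} with $\k\oplus\k$ to land in the two-particle sector, decomposes $(\k\oplus\k)$-two-particle space into the $(2,0)$, $(0,2)$ and $(1,1)$ summands, and finishes the mixed piece with Lemma \ref{2-addit 2particle} — which is precisely the one-sentence argument the paper gives, with the bookkeeping made explicit. No gaps.
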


 \begin{defn}Let $(H_t, U_{s,t})$ be spatial super-product system with canonical unit $\Omega$. We say a $2-$addit $\{a_{s,t}: s,t\geq 0\}$  is orthogonal to another $2-$addit  $\{b_{s,t}: s,t\geq 0\}$ if $a_{s,t}\perp b_{s,t}$ for all $s,t \geq 0$.
 
The $2-$index with respect to $\Omega$ is defined as the supremum of the cardinality of all sets containing mutually orthogonal $2-$addits. 
%We denote it by $2-Ind(H_t, U_{s,t})$.
\end{defn}

%\begin{rem}\label{invariant} 
%We could not prove that any two maximal sets of orthogonal $2-$addits have same cardinality and hence have defined the $2-$index as above.
Clearly $2-$index is an invariant under isomorphism preserving the canonical unit. If the automorphism group of the super product system acts transitively on the set of all units, then the $2-$index do not depend on a particular unit and it is an invariant for the super-product system. In particular when the unit is unique up to scalars, the $2-$index is an invariant, which is the case for Clifford and CAR super-product systems. 
%\end{rem}

 For $f\in L^2_{\loc}(\R_+,\k\opower{2})$ we denote the $2-$addit described in Proposition \ref{2-addit Clifford}, by  $a_{s,t}^f$. 
 %Similarly for $\tilde{f}= (f^{20}, f^{02}, f^{11}_1, f^{11}_2) \in \left(L^2_{\loc}(\R_+,\k\opower{2})\right)^{\oplus 4}$ we denote the $2-$addit described in Proposition \ref{2-addit CAR}, by  $a_{s,t}^{\tilde{f}}$.

 \begin{lem}\label{local structure lemma}
    For $f,g\in L^2_{\loc}(\R_+,\k\opower{2})$ and $T\in (0,\infty]$,  $a_{s,t}^f\perp a^g_{s,t}$ for all $s,t\in (0,\infty)$ with $s+t\leq T$  if and only if $f( r) \perp g( r)$ for almost all $r\in (0,T)$. 
    %Also, for $\tilde{f},\tilde{g}\in \left(L^2_{\loc}(\R_+,\k\opower{2})\right)^{\oplus 4}$ and $T\in (0,\infty]$,  $a_{s,t}^{\tilde{f}}\perp a^{\tilde{g}}_{s,t}$ for all $s,t\in (0,\infty)$ with $s+t\leq T$  if and only if $\tilde{f} ( r)\perp \tilde{g}( r)$ for almost all $r\in (0,T)$.
   \end{lem}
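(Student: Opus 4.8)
The plan is to compute the inner product $\ip{a^f_{s,t}, a^g_{s,t}}$ explicitly using the formula from Proposition \ref{2-addit Clifford}, observing that the two terms in the description of $a^f_{s,t}$ (the one supported on $[s,s+t]\times[0,s]$ and the one supported on $[0,s]\times[s,s+t]$) have disjoint supports, so the cross terms vanish and the inner product splits as a sum of two contributions. Each contribution is an integral of $\ip{f(x-y),g(x-y)}$ (respectively $\ip{\Pi_{\k^{\otimes 2}}f(y-x),\Pi_{\k^{\otimes 2}}g(y-x)}$, which equals $\ip{f(y-x),g(y-x)}$ since $\Pi_{\k^{\otimes 2}}$ is unitary) over the triangular region $\{(x,y): s\le x\le s+t,\ 0\le y\le s\}$, respectively its reflection. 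Both integrals are, after the change of variables $r = x-y$, of the form $\int_0^{s+t} w_{s,t}(r)\,\ip{f(r),g(r)}\,dr$ for a weight $w_{s,t}$ that is strictly positive on $(0,s+t)$ (it is the length of the slice of the triangle at fixed $r=x-y$). Actually, accounting for the antisymmetry identification one should be careful with the $L^2(\Delta^2_t)$ versus $L^2([0,t]^2)$ normalization, but the point that survives is: $\ip{a^f_{s,t},a^g_{s,t}} = \int_0^{s+t} w_{s,t}(r)\ip{f(r),g(r)}\,dr$ with $w_{s,t}>0$ a.e. on $(0,s+t)$.

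From this identity both directions follow. If $f(r)\perp g(r)$ for almost every $r\in(0,T)$, then for any $s,t$ with $s+t\le T$ the integrand vanishes a.e., so $\ip{a^f_{s,t},a^g_{s,t}}=0$. Conversely, suppose $a^f_{s,t}\perp a^g_{s,t}$ for all $s,t$ with $s+t\le T$. Consider the (complex-valued) function $h(r):=\ip{f(r),g(r)}$, which lies in $L^1_{\loc}(0,T)$ by Cauchy--Schwarz. The hypothesis says $\int_0^u w_{s,t}(r)h(r)\,dr = 0$ for every decomposition $u=s+t\le T$. By varying $s$ and $t$ with $s+t=u$ fixed, the weights $w_{s,t}$ range over a family rich enough to force $h=0$ a.e. on $(0,u)$; concretely, one can take differences of two such identities (as was done in Lemma \ref{2-addit 2particle} and in the dyadic subdivision argument of Proposition \ref{2-addit Clifford}) to localize $h$ to arbitrarily small intervals, or simply differentiate in $s$ to peel off the weight. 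Letting $u\uparrow T$ gives $h=0$ a.e. on $(0,T)$, i.e. $f(r)\perp g(r)$ for almost all $r$.

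The main obstacle is bookkeeping the exact form of the weight $w_{s,t}(r)$ and verifying it is strictly positive on the full interval $(0,s+t)$ (including near the endpoints and across the value $r=s$, where the two pieces of the support meet), and then extracting pointwise vanishing of $h$ from the vanishing of all these weighted integrals. I expect the cleanest route for the converse is: fix $u\le T$ and $r_0\in(0,u)$; choose $s$ close to $r_0$ so that the triangle $[s,s+t]\times[0,s]$ (with $t=u-s$) has its slice at $x-y\approx r_0$ of positive length while a second choice $s'$ makes that slice disappear, then subtract; iterating over a dense set of $r_0$ gives $h=0$ a.e. This is exactly the style of argument already used twice above, so no genuinely new idea is needed beyond a careful computation of the overlap integrals.
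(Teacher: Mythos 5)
Your reduction of $\ip{a^f_{s,t},a^g_{s,t}}$ to a weighted integral is correct and coincides with the paper's first step: the two pieces of $a^f_{s,t}$ have disjoint supports, the flip is unitary, and the inner product becomes $2\int_s^{s+t}\int_0^s\ip{f(p-q),g(p-q)}\,dq\,dp=\int_0^{s+t}w_{s,t}(r)\ip{f(r),g(r)}\,dr$ with $w_{s,t}(r)=2\min(r,s,t,s+t-r)$ (the region $[s,s+t]\times[0,s]$ is a rectangle, not a triangle, but the weight is as you expect and is indeed strictly positive on $(0,s+t)$). The forward implication is then immediate, as in the paper. The gap is in the converse, which is the whole content of the lemma. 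Because $w_{s,t}(r)>0$ for every $r\in(0,s+t)$, your localization device --- choosing a second split $s'$ of the same total length $u=s+t$ which ``makes the slice at $r_0$ disappear'' and subtracting --- is not available: no admissible split kills the slice. Worse, for fixed $u$ every weight $w_{s,u-s}(r)=2\min(r,s,u-s,u-r)$ is symmetric under $r\mapsto u-r$, so any $h(r)=\ip{f(r),g(r)}$ which is antisymmetric about $u/2$ (this is realizable by suitable $f,g$) is orthogonal to the entire family; hence varying only the splitting point of a fixed $u$ can never force $h=0$ a.e.\ on $(0,u)$, contrary to your ``rich enough'' claim. The variant ``differentiate in $s$ at fixed $u$'' meets the same obstruction: it yields only $\int_s^{u-s}h(r)\,dr=0$.

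What is needed, and what the paper does, is to exploit the hypothesis as a genuinely two-parameter family, varying the total length $u=s+t$ as well as the split. Substituting $u=p-s$, $v=s-q$ rewrites the hypothesis as $\int_0^t\int_0^s\ip{f(u+v),g(u+v)}\,dv\,du=0$ for all $(s,t)$ with $s+t\le T$, and differentiating in both variables (Lebesgue differentiation of the double primitive) gives $\ip{f(s+t),g(s+t)}=0$ for a.e.\ $(s,t)$, i.e.\ $h=0$ a.e.\ on $(0,T)$. Your identities do contain enough information to finish along your lines --- for instance, from $\int_s^{u-s}h=0$ for all $0<s<u/2$ and \emph{all} $u\le T$ one obtains $\int_a^b h=0$ for all $0<a<b<T$, hence $h=0$ a.e. --- but this essential use of varying $u$ is exactly what your sketch omits, so as written the converse does not go through.
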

 
 \begin{proof} For $f, g \in L^2_{\loc}(\R_+,\k\opower{2})$, %denote $F, G \in L^2_{\loc}(\Delta_\infty^2,\k\opower{2})$ by $$F(p,q) = f(p-q), ~~G(p,q)=g(p-q).$$ W
 we only have to prove that $$\int_s^{s+t}\int_0^s\ip{f(p-q), g(p-q)}dqdp=0~\forall s,t\in (0,\infty) ~\mbox{with}~ s+t\leq T$$  if and only if $f(r)\perp g(r)$  for almost all $r\in (0,T)$. One way is clear. For the other way, 
% If $f(r)\perp g(r)$ for almost all $r\in (0,T)$ then, for $s,t\in(0,\infty)$ with $s+t\leq T$, we have $$\int_s^{s+t}\int_0^s\ip{f(p-q), g(p-q)}dqdp=0.$$ For the converse, 
fix $\varepsilon>0$. 
%If $a_f(s,t)\perp a_g(s,t)$ for all $s,t\in (0,\infty)$ with $s+t\leq T$, then 
For any $(s,t)\in (0,\varepsilon)\times (0,T-\varepsilon)$, we have
    \begin{align*}0 & =\int_s^{s+t}\int_0^s\ip{f(p-q),g(p-q)}dqdp\\ &= - \int_0^t\int_{0}^s\ip{f(u+v), g(u+v)}dvdu ~,    
    \end{align*}
%     for all $t\in(0,T-s)$,
    so that
%     \begin{equation}\label{int equation}
%      \int^{s}_0\ip{f(t+q)}{g(t+q)}dq=0
%     \end{equation}
%     for almost all $t\in(0,T-s)$. Fixing $\varepsilon>0$, (\ref{int equation}) holds for almost all $t\in (0,\varepsilon)$ and all $s\in (0,T-\varepsilon)$, so it follows that  
%     then for almost all $t\in(0,\varepsilon)$
%     $$ \int_0^s\ip{f(t-q)}{g(t-q)}dqdp=0$$
%     for all $s\in (0,T-\varepsilon)$, i.e.
    $\ip{f(t+s), g(t+s)}=0$ for almost all $(s,t)\in (0,\varepsilon)\times (0,T-\varepsilon)$, for all $\epsilon >0$. That is, $\ip{f( r), g(r )}=0$ for almost all $r\in(0,T)$.
   \end{proof}

 \begin{thm}
 The $2-$index of $(H^{ \k}_{2\N_0}(t), U_{s,t})$ is $n^2$. Consequently the super-product systems $(H^{ \k}_{2\N_0}(t), U_{s,t})$ is isomorphic to $(H^{ \k'}_{2\N_0}(t), U_{s,t})$ if and only if $\dim(\k)=\dim(\k')$. 
 
Also $2-$index of $(E^{\k}_{2\N_0}(t), U_{s,t})$ is $4n^2$. Consequently $(E^{ \k}_{2\N_0}(t), U_{s,t})$ is isomorphic to $(E^{\k'}_{2\N_0}(t), U_{s,t})$ if and only if $\dim(\k)=\dim(\k')$.
 \end{thm}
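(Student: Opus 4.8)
The plan is to compute the $2$-index directly from the structural descriptions of defective $2$-addits obtained in Proposition \ref{2-addit Clifford} and Proposition \ref{2-addit CAR}, and then to combine this with the fact (noted just before the theorem) that the canonical unit for the Clifford and CAR super-product systems is unique up to a scalar, so that the $2$-index is a genuine isomorphism invariant. First I would treat the Clifford case. By Proposition \ref{2-addit Clifford}, every defective $2$-addit for $(H^{\k}_{2\N_0}(t),U_{s,t})$ lies in the $2$-particle space and is of the form $a^f_{s,t}$ for a uniquely determined $f\in L^2_{\loc}(\R_+,\k\opower{2})$; conversely every such $f$ gives a $2$-addit. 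So the map $f\mapsto a^f$ is a bijection between $L^2_{\loc}(\R_+,\k\opower{2})$ and the space of defective $2$-addits, and by Lemma \ref{local structure lemma} (with $T=\infty$) it carries pointwise-a.e.\ orthogonality of the $f$'s to the orthogonality relation on $2$-addits defined above. Hence the $2$-index equals the supremum of cardinalities of families in $L^2_{\loc}(\R_+,\k\opower{2})$ that are mutually orthogonal almost everywhere.

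The next step is to identify that supremum with $\dim(\k\opower 2)=n^2$. For the lower bound, fix an orthonormal basis $\{\eta_i:i\in I\}$ of $\k\opower 2$ (with $|I|=n^2$) and take the constant functions $f_i\equiv\eta_i$; these are in $L^2_{\loc}(\R_+,\k\opower 2)$ and satisfy $f_i(r)\perp f_j(r)$ for all $r$ when $i\neq j$, so by Lemma \ref{local structure lemma} the corresponding $2$-addits are mutually orthogonal, giving $n^2$ of them. For the upper bound, suppose $\{f_\alpha\}_{\alpha\in A}$ is a family with $f_\alpha(r)\perp f_\beta(r)$ for almost every $r$ whenever $\alpha\neq\beta$; the issue is that the null set depends on the pair $(\alpha,\beta)$, but $\k\opower 2$ is separable so on each compact interval $[0,m]$ only countably many $f_\alpha$ can be nonzero on a positive-measure set, hence at almost every $r$ the nonzero values $\{f_\alpha(r):f_\alpha(r)\neq 0\}$ form an orthogonal (thus linearly independent) subset of $\k\opower 2$, forcing $|A|\leq n^2$ after a measure-theoretic argument choosing a point $r$ at which "enough" of the $f_\alpha$ are simultaneously nonzero. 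This gives $2$-index $=n^2$, and since the canonical unit is unique up to scalar, two Clifford super-product systems are isomorphic iff they have the same $2$-index iff $\dim(\k)=\dim(\k')$ (the forward direction being the invariance of the $2$-index).

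For the CAR case I would argue identically, using Proposition \ref{2-addit CAR} and the isomorphism $(E^{\k}_{2\N_0}(t),U_{s,t})\cong(H^{\k\oplus\k}_{2\N_0}(t),U_{s,t})$ recorded in Example \ref{subsemigroup example2}. Indeed, $\dim(\k\oplus\k)=2n$, so the Clifford computation immediately gives $2$-index $(2n)^2=4n^2$, and the classification statement for $E^{\k}_{2\N_0}$ follows in the same way. (Alternatively one can read off the answer directly from Proposition \ref{2-addit CAR}: a defective $2$-addit for $E^{\k}_{2\N_0}$ is determined by the triple $(f^1,f^2,f_{12})$ living in $L^2_{\loc}(\R_+,\k\opower 2)^{\oplus 3}$ together with a flip constraint tying two of them, and a bookkeeping argument shows the relevant parameter space has "local dimension" $4n^2$; but going through $\k\oplus\k$ is cleaner.)

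The main obstacle I anticipate is the upper bound in the $2$-index computation: turning the pairwise almost-everywhere orthogonality of the family $\{f_\alpha\}$ — where each null exceptional set depends on the pair — into a uniform statement at a single point $r$, so as to bound the cardinality by $\dim(\k\opower 2)$. This requires a careful separability-plus-measure-theory argument (restricting to compact intervals, discarding $f_\alpha$ that vanish a.e., and using that in a separable Hilbert space any family of pairwise-orthogonal nonzero vectors is countable of cardinality at most the dimension) rather than the soft bijection-plus-Lemma-\ref{local structure lemma} reasoning that handles the rest. Everything else — the lower bounds, the reduction via Example \ref{subsemigroup example2}, and the passage from $2$-index to isomorphism classification via uniqueness of the unit — is routine given the results already established.
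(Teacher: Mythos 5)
Your overall route coincides with the paper's: the lower bound via the constant symbols $f_{ij}\equiv e_i\otimes e_j$ built from an orthonormal basis of $\k$, the use of Proposition \ref{2-addit Clifford} and Lemma \ref{local structure lemma} (with $T=\infty$) to translate orthogonality of defective $2$-addits into almost-everywhere pointwise orthogonality of their symbols, the reduction of the CAR case through $E^{\k}_{2\N_0}\cong H^{\k\oplus\k}_{2\N_0}$ from Example \ref{subsemigroup example2}, and the passage from the $2$-index to the isomorphism statement via uniqueness of the unit. The difficulty is the upper bound, precisely the step you yourself flagged, and your proposed resolution does not work.

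Pairwise a.e.\ orthogonality of the symbols does \emph{not} bound the cardinality of the family by $\dim(\k\opower{2})$, because the symbols may have essentially disjoint supports. Take $\k=\C$ (so $n^2=1$) and $f_i=1_{[i-1,i]}$ for $i\in\N$: then $f_i(r)\perp f_j(r)$ for every $r$ and all $i\neq j$, each $a^{f_i}$ is a nonzero defective $2$-addit, and by Lemma \ref{local structure lemma} these $2$-addits are mutually orthogonal in the sense of the definition, yet the family is infinite. In particular there is typically \emph{no} point $r$ at which ``enough'' of the $f_\alpha$ are simultaneously nonzero, and no separability-plus-measure-theory refinement can produce one; the inequality $|A|\le n^2$ you aim to prove from pairwise a.e.\ orthogonality alone is simply false. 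The paper's proof treats this step differently (if tersely): it considers a family of exactly $n^2$ mutually orthogonal $2$-addits whose symbols form an ortho-basis of $\k\opower{2}$ at almost every $r$ --- implicitly using that the symbols are a.e.\ nonzero, as the constant family from the lower bound is --- and then shows such a family is maximal, since any further $2$-addit orthogonal to it has symbol orthogonal a.e.\ to an a.e.\ basis and hence vanishes. To complete your argument you must build this a.e.\ nonvanishing (equivalently, the maximality formulation consistent with the intended reading of the $2$-index) into the upper bound, rather than trying to select a single good point $r$ for an arbitrary pairwise-orthogonal family.
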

 
 \begin{proof}
    Pick an orthonormal basis $(e_i)_{i=1}^n$ for $\k$ (possibly with $n=\infty$) and define functions $f_{ij}\in L^2_{\loc}(\R_+,\k\opower{2})$ by
    $ f_{ij}(r):=e_i\otimes e_j$ for all $r\geq0$. 
    %The corresponding $2-$addits for the Clifford product system is orthogonal. 
     Let $a_{ij}$ denote the 2-cocycle with symbol $f_{ij}$, then it is clear that the $a_{ij}$ are pairwise locally orthogonal. Hence the $2-$index  is greater than or equal to $n^2$.
    %so $\ldim Z^2_{def}( E^{\noise}_{2\Nat},\Omega)\geq n^2$.
    
    Now assume that we have a set of $n^2$ orthogonal 2-cocycles $a_1,\ldots, a_{n^2}$ with symbols $f_1,\ldots,f_{n^2}$. Then, by Lemma \ref{local structure lemma}, $f_1,\ldots,f_{n^2}$ are orthogonal a.e.
    Thus, for almost all $r\geq0$, $(f_i(r))_{i=1}^{n^2}$ is an ortho-basis for $\k \opower{2}$. If $a$ is a 2-cocycle with symbol $f$ which is orthogonal to $a_1,\ldots,a_{n^2}$ then, again by Lemma \ref{local structure lemma}, we must have $f(r)\perp f_1(r),\ldots,f_{n^2}(r)$ for almost all $r\geq0$, i.e. $f=0$.
   \end{proof}
   
   The above Proposition provides a direct proof that Clifford flows on type II$_1$ factors are non-cocycle if they have different ranks. We further have the following new result for the CAR flows on type III factors.  Notice under the assumption that $R$ and $1-R$ are injective $Rank(R) = dim(\k)$.
    
   \begin{cor}  Let $\alpha^{R_1}, \alpha^{R_2}$ and $ \beta^{R_1}, \beta^{R_2}$ be respectively the CAR flows and even CAR flows associated with $1\otimes R_1, 1\otimes R_2 \in B(L^2(\R_+, \k)$. Then $\alpha^{R_1}$ is not  cocycle conjugate to $\alpha^{R'}$ if  $R_1$ and  $R_2$ have different ranks.  Similarly $\beta^{R_1}$ is also not  cocycle conjugate to $\beta^{R_2}$ if  $R_1$ and  $R_2$ have different ranks.
\end{cor}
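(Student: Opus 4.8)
The plan is to derive the corollary from two facts already in hand: the associated super-product system is a cocycle-conjugacy invariant (Theorem~\ref{sps theorem}), and the $2$-index of $E^{\k}_{2\N_0}$ detects $\dim\k$ (the theorem immediately preceding the corollary). Let $R_i$ be a positive contraction on a separable Hilbert space $\k_i$ with $R_i$ and $1-R_i$ injective, so that $\mathrm{Rank}(R_i)=\dim\k_i=:n_i$; the hypothesis is that $n_1\neq n_2$. First I would invoke the proposition of Section~\ref{CAR} identifying the super-product systems of CAR flows and even CAR flows: both the super-product system of $\alpha^{R_i}$ and that of $\beta^{R_i}$ are isomorphic to $E^{\k_i}_{2\N_0}$.

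Next I would argue that the $2$-index of $E^{\k}_{2\N_0}$ is an invariant of the super-product system itself, and not just of the pair consisting of the super-product system and a distinguished unit. Since $E^{\k}_{2\N_0}\cong H^{\k\oplus\k}_{2\N_0}$ by Example~\ref{subsemigroup example2}, the classification of the systems $H^{\k}_G$ recorded in Section~\ref{types} shows it is of type II$_0$; thus its index is $0$, so $\mathfrak{A}_\Omega(E^{\k}_{2\N_0})=\{0\}$, and the bijection $\Exp_\Omega$ of Theorem~\ref{explog} then shows that the only exponential unit is $\Omega$, whence every unit is a scalar multiple of the canonical one. As noted in Section~\ref{2-addits computation} right after the definition of the $2$-index, uniqueness of the unit up to scalars makes the $2$-index an isomorphism invariant of the bare super-product system; by the theorem preceding the corollary it equals $4n^2$ with $n=\dim\k$.

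Finally, if $\alpha^{R_1}$ were cocycle conjugate to $\alpha^{R_2}$, then by Theorem~\ref{sps theorem} the super-product systems $E^{\k_1}_{2\N_0}$ and $E^{\k_2}_{2\N_0}$ would be isomorphic, forcing $4n_1^2=4n_2^2$ and hence $n_1=n_2$ (the cardinalities being taken in $\overline{\N}$), which contradicts $\mathrm{Rank}(R_1)\neq\mathrm{Rank}(R_2)$. The same argument, with $\beta$ in place of $\alpha$ and using the identification of the super-product system of $\beta^{R_i}$ with $E^{\k_i}_{2\N_0}$, yields the statement for the even CAR flows.

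I do not anticipate a serious obstacle here: the substance sits in the $2$-index theorem, and everything above is assembly. The only step that genuinely warrants a line of justification is the reduction from ``$2$-index relative to $\Omega$'' to ``$2$-index of the bare super-product system'', which is handled, as in the second paragraph, via the uniqueness up to scalars of the unit for index-zero super-product systems.
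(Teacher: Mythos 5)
Your proposal is correct and follows essentially the same route the paper intends: the corollary is an immediate assembly of Theorem~\ref{sps theorem}, the proposition identifying the super-product systems of $\alpha^R$ and $\beta^R$ with $E^{\k}_{2\N_0}$, and the $2$-index theorem. Your extra paragraph justifying that the $2$-index is an invariant of the bare super-product system (via index $0$, so the unit is unique up to scalars) is exactly the point the paper asserts without proof in the remark following the definition of the $2$-index, so it is a welcome but consistent elaboration rather than a different argument.
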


  \section{The automorphism group}

To conclude the paper, we calculate the automorphism group of the Clifford super-product system $H^{\k}_{2\N_0}$, which is clearly an invariant of the super-product system.   We show that it is much larger than the gauge group of the corresponding Clifford flow, which was shown in \cite{MS1} to be trivial.

 Let $\mathcal{M}(\R_+;\mathcal{U}(\k\opower{2}))$ denote the group of all measurable, unitary-valued functions endowed with pointwise multiplication.  For a given $\lambda\in\R$ and $F\in\mathcal{M}(\R_+;\mathcal{U}(\k\opower{2}))$ define
  $$U_{(\lambda,F)}(t) \Omega_t=e^{i\lambda t}\Omega_t, \qquad (U_{(\lambda,F)}(t) f)(x,y)=e^{i\lambda t}F(|x-y|)f(x,y) $$ for all $x,y\in [0,t]$, $t>0$, $f\in L^2([0,t];\k)^{\wedge 2}$, where $ \Omega_t$ is the vacuum vector in $H^{\k}_{2\N_0}(t)$.    Now extend $U_{(\lambda,F)}(t)$ to $L^2([0,t];\k)^{\wedge 2n}$ by  $$U_{(\lambda,F)}(t) \left(f_1 \wedge \cdots \wedge  f_n\right) = U_{(\lambda,F)}(t)f_1 \wedge \cdots \wedge  U_{(\lambda,F)}(t) f_n$$ for $f_1, \cdots f_n \in L^2([0,t];\k)^{\wedge 2}$. Clearly $U_{(\lambda,F)}(t)$ extends to a unitary operator on $H^{\k}_{2\N_0}(t)$. 
  
  We denote the automorphism group of $H^{\k}_{2\N_0}$ by $Aut( H^{\k}_{2\N_0})$.
  
  \begin{thm}   $Aut( H^{\k}_{2\N_0})$ is isomorphic to $(\R,+)\times\mathcal{M}(\R_+;\mathcal{U}(\k\opower{2}))$. 
  \end{thm}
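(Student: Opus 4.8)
The plan is to establish a surjective group homomorphism from $(\R,+)\times\mathcal{M}(\R_+;\mathcal{U}(\k\opower{2}))$ onto $\mathrm{Aut}(H^{\k}_{2\N_0})$ via $(\lambda,F)\mapsto U_{(\lambda,F)}=(U_{(\lambda,F)}(t))_{t>0}$, and then show it is injective. First I would check that each $U_{(\lambda,F)}$ really is an automorphism: that each $U_{(\lambda,F)}(t)$ is a well-defined unitary on $H^{\k}_{2\N_0}(t)$ (this is essentially asserted in the paragraph preceding the statement, but one should note that $F(|x-y|)$ acting on the antisymmetric two-particle space must be combined with $\Pi_{\k\opower{2}}$-equivariance; since $|x-y|$ is symmetric in $x,y$, the operator $f(x,y)\mapsto F(|x-y|)f(x,y)$ does preserve the antisymmetry condition \eqref{flip} provided $F(r)$ commutes appropriately, but in fact any measurable unitary-valued $F$ works because the flip acts only on the $\k\opower 2$ factor and we may simply define the action and observe it preserves the subspace — this needs a one-line verification), that it is Borel measurable in $t$, and crucially that it intertwines the product structure: $U_{(\lambda,F)}(s+t)\circ U_{s,t}=U_{s,t}\circ(U_{(\lambda,F)}(s)\otimes U_{(\lambda,F)}(t))$. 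The last identity reduces, after the identification $L^2([0,t],\k)^{\wedge n}\cong L^2(\Delta^n_t,\k\opower n)$ and the formula for $V_{s,t}$ recalled in Section \ref{2-addits computation}, to the elementary observation that for a point in $[t,s+t]^{\leftthreetimes m}\times[0,s]^{\leftthreetimes n}$ the quantity $|x_i-x_j|$ is invariant under the shift by $t$ in the first block, so the two sides agree; the scalar $e^{i\lambda(s+t)}=e^{i\lambda s}e^{i\lambda t}$ factor matches trivially. Multiplicativity $(\lambda_1,F_1)(\lambda_2,F_2)\mapsto U_{(\lambda_1,F_1)}U_{(\lambda_2,F_2)}=U_{(\lambda_1+\lambda_2,F_1F_2)}$ is immediate from the pointwise definition.

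Next I would prove injectivity of the homomorphism. If $U_{(\lambda,F)}=\mathrm{id}$, then acting on $\Omega_t$ forces $e^{i\lambda t}=1$ for all $t>0$, hence $\lambda=0$; acting on the two-particle vectors forces $F(|x-y|)=1$ for almost every pair $(x,y)$, hence $F=1$ a.e., i.e. $F$ is the identity of $\mathcal{M}(\R_+;\mathcal{U}(\k\opower{2}))$.

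The heart of the argument, and the step I expect to be the main obstacle, is surjectivity: showing every automorphism $V=(V_t)$ of $H^{\k}_{2\N_0}$ has the form $U_{(\lambda,F)}$. The strategy is to use the unit and the $2$-addits as a rigid skeleton. Since the canonical unit $\Omega$ is unique up to scalars (noted in Section \ref{2-addits computation} for Clifford super-product systems), $V_t\Omega_t=c_t\Omega_t$ with $|c_t|=1$, and the unit property $V_{s+t}U_{s,t}(\Omega_s\otimes\Omega_t)=U_{s,t}(V_s\Omega_s\otimes V_t\Omega_t)$ gives $c_{s+t}=c_sc_t$; together with measurability this yields $c_t=e^{i\lambda t}$ for a unique $\lambda\in\R$. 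Replacing $V$ by $e^{-i\lambda t}V_t$ we may assume $V$ fixes $\Omega$ pointwise. Then $V$ induces (via the proposition stating that unit-preserving isomorphisms induce isomorphisms on defective cochains/addits, and via the correspondence between $2$-addits and their symbols in $L^2_{\loc}(\R_+,\k\opower 2)$ from Proposition \ref{2-addit Clifford}) a bounded invertible map on the symbol space; I would argue it is given by pointwise multiplication by a measurable unitary-valued $F$. Concretely: for $f\in L^2_{\loc}(\R_+,\k\opower 2)$, $V$ sends the $2$-addit $a^f$ to another defective $2$-addit, necessarily $a^{\Phi f}$ for some linear $\Phi$; since $V$ preserves inner products of $2$-addits restricted to each interval, Lemma \ref{local structure lemma} shows $\Phi$ preserves the local (pointwise-a.e.) inner product structure, so $\Phi$ is a.e.\ pointwise multiplication by a unitary $F(r)$. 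One then has to promote this from the level of $2$-addits to the full super-product system: since $\{\Omega\}\cup\{a_{\xi,\eta}\}$ generates $H^{\k}_{2\N_0}$ (Remark \ref{II-I}), and $V$ is determined by its action on generators together with compatibility with the isometries $U_{s,t}$, one checks that $V$ must act on each $L^2([0,t],\k)^{\wedge 2n}$ exactly as $U_{(0,F)}(t)$ does — i.e.\ by the simple tensor formula applied factorwise, because the generating products $a^1_{s_1,t_1}\cdots a^n_{s_n,t_n}$ are mapped in the way dictated by $F$ on each factor and these span. The delicate points are: (i) verifying that the symbols of the image $2$-addits genuinely depend only on $|x-y|$ (forced by Proposition \ref{2-addit Clifford}, since every defective $2$-addit has that form), so no "non-Toeplitz" behaviour can appear; (ii) checking that the resulting $F$ is independent of $t$ and of which generating products one uses (this follows from the shift-compatibility of $V$ with the $U_{s,t}$); and (iii) confirming that $V$ and $U_{(0,F)}$, agreeing on a generating set and both being isomorphisms of super-product systems, must coincide on all of $H^{\k}_{2\N_0}(t)$. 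Once surjectivity is in hand, the map $(\lambda,F)\mapsto U_{(\lambda,F)}$ is a group isomorphism and the theorem follows.
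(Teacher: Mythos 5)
Your proposal is correct and takes essentially the same route as the paper's proof: the homomorphism $(\lambda,F)\mapsto U_{(\lambda,F)}$, injectivity via the vacuum and the two-particle vectors, and surjectivity by stripping off the factor $e^{i\lambda t}$ using uniqueness of the unit, passing to the induced map on symbols of defective $2$-addits (whose form is forced by Proposition \ref{2-addit Clifford}), invoking Lemma \ref{local structure lemma} to obtain pointwise preservation of inner products, and concluding via generation by $2$-addits as in Remark \ref{II-I}. The only step you compress is the passage from local inner-product preservation to multiplication by a measurable unitary-valued function; the paper makes this precise by showing the induced unitary on $L^2(\R_+;\k\opower{2})$ commutes with the projections $P_t$, hence lies in $(L^\infty(\R_+)\otimes 1)'$ and is decomposable.
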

  
   \begin{proof}
 It is clear that the family $\{ U_{(\lambda,F)}(t) : t\geq 0\}$ provides an automorphism for any fixed $(\lambda,F)$, and that the map $(\lambda,F) \mapsto  U_{(\lambda,F)}$  induces a homomorphism from $(\R,+)\times\mathcal{M}(\R_+;\mathcal{U}(\k\opower{2}))$ into $Aut( H^{\k}_{2\N_0})$.  To prove injectivity, 
 if the automorphism determined by $(\lambda,F)$ coincides with that of $(\mu,G)$, then the action on the vacuum ensures us that $\lambda=\mu$ and, picking $u,v\in\k \opower{2}$, $T>0$ and setting $$ f=1_{\{0\leq s< t \leq T \}}\otimes (u\otimes v)-1_{\{0\leq t< s\leq T\}}\otimes (v\otimes  u),  $$   we obtain $F(s-t)(u\otimes v)=G(s-t)(u\otimes v)$ for almost all $0\leq t<s\leq T$, but since $u,v$ and $T$ were arbitrary, $F=G$. Thus it remains only to show that the given homomorphism is a surjection. 
   
Since any automorphism $\theta \in Aut( H^{\k}_{2\N_0})$ preserves units, it must satisfy $\theta_t(\Omega_t)=e^{i\lambda t}\Omega_t$ for some $\lambda\in\R$. Thus, by setting  $\theta'_t(x):=e^{-i\lambda t}\theta_t(x)$ for all $t>0$, $x\in  H^\alpha_t$ we obtain an automorphism which preserves the unit $\Omega$.    

Since $\theta'$ preserves defective 2-cocycles, there exists a linear bijection $X$ on $L^2_{\loc}(\R_+;\k \opower{2})$ such that  $ \theta_{s+t}'(a_f(s,t))=a_{Xf}(s,t) $. The equality $\ip{a_f(s,t), a_g(s,t)}=\ip{a_{Xf}(s,t), a_{Xg}(s,t)}$ is equivalent to
   $$\int_0^t\int_0^s\ip{f(p+q), g(p+q)}dqdp=\int_0^t\int_0^s\ip{(Xf)(p+q), (Xg)(p+q)}dqdp$$ for all $s,t>0$ (see the proof of Lemma \ref{local structure lemma}).  Thus $$\ip{f(r),g(r)}=\ip{(Xf)(r), (Xg)(r)}, ~~ \mbox{for almost all}~ r>0.$$  This implies that $X$ restricts to an isometry $\tilde{X}$ on $L^2(\R_+;\k\opower{2})$.  On the other hand $(\theta')^{-1}$ implements the bijection $X^{-1}$, which also restrict to an isometry on $L^2(\R_+;\k\opower{2})$, and hence $\tilde{X}$  is unitary.
   
We claim that $\tilde{X}$ commutes with the orthogonal projections
 $$P_t:L^2(\R_+;\k \opower{2})\to L^2([0,t];\k \opower{2}), 
 ~\qquad f\mapsto f_{[0,t]}$$
  for all $t\geq0$.
 To see this, note that if $f\in\Ker P_T$ then
   $ a_f(s,t) \perp a_g(s,t)$ for all $g\in L^2_{\loc}(\R_+;\k\opower{2})$ and $s+t\leq T$. Hence
 $a_{Xf}(s,t)\perp a_g(s,t)$ for all $g\in L^2_{\loc}(\R_+;\k\opower{2})$ and $s+t\leq T$ which implies, by Lemma \ref{local structure lemma}, that $Xf\in\Ker P_T$, that is $\tilde{X}(1-P_T)H \subseteq  (1-P_T)H$.  Similarly, on the other hand, $\tilde{X^{-1}}(1-P_T)H \subseteq  (1-P_T)H$ and $\tilde{X^{-1}} = \tilde{X}^*$. Hence the claim.   
 Thus, we can identify $\tilde{X}$ with an element of $(L^\infty(\R_+)\otimes 1)'=L^\infty(\R_+;B(\k\opower{2}))$.  Since $X$ is a unitary, it follows that $\tilde{X}$  is given by an $F \in \mathcal{M}(\R_+;\mathcal{U}(\k\opower{2}))$. 
 
 Now since the $2-$addits generate the super-product system (see Remark \ref{II-I}), the automorphism $\theta$ is determined by its action on the $2-$addits and 
   %Using the identification $(Xf)(r)= \wt{X}(r)f(r),$  we see that for almost all $r\geq0$ $\wt{X}(r)$ is a surjective isometry. It follows that $\wt{X}\in \mathcal{M}(\Rplus;\mathcal{U}(\kil\opower{2}))$ and thus, 
 the automorphisms are as claimed.
  \end{proof}

 \begin{rem}
 It is apparent that $Aut( H^{\k}_{2\N_0})$ consists of far more than the restrictions of automorphisms of the Fock product system.  The automorphisms of the Fock product system which leave the even subspaces invariant are all of the form
 $$ \theta_s(\Omega_t)=e^{i\lambda s}\Omega_t,~~ \theta_s (f)=e^{i\lambda s}(1_{L^2([0,t])}\otimes U) f ~~ (s,t>0,~f\in L^2([0,t];\k)),$$
 for some $\lambda\in\R$, $U\in\mathcal{U}(\k$. Thus they form a group isomorphic to $(\R,+)\times\mathcal{U}(\k)$. 
\end{rem}


\begin{thebibliography}{KRP}
  
  \bibitem[$\text{Ale}$]{alevras}
 A. Alevras,
 {One parameter semigroups of endomorphisms of factors of type II$_1$},
 \emph{J. Op. Thy.}, \textbf{51} (2004), 161-179.
 
\bibitem[$\text{Am}$]{Am} Grigori G. Amosov, {On cocycle conjugacy of quasi-free endomorphsms semigroups on the CAR algebra},  \emph{Journal of Mathematical Sciences} 07/2001; 105(6):2496-2503. DOI: 10.1023/A:1011304214659
 
 
 \bibitem[$\text{Ar}_1$]{araki}
 H. Araki,
 A lattice of von Neumann algebras
 associated to the quantum theory of the free Bose field,
 \emph{J. Math. Phys.}, \textbf{4} (1963), 1343--1362.

 \bibitem[$\text{Ar}_2$]{Araki2}
  --- --- ,
 Type of von Neumann algebra associated with free field,
 \emph{Progr. Theoret. Phys.}
 \textbf{32} (1964) 956-965.

  \bibitem[Ar$_3$]{Araki modular} --- ---,
  Some properties of modular conjugation operator of von Neumann algebras and a non-commutative Radon-Nikodym theorem with a chain rule,
  \emph{Pacific J. Math.}
  \textbf{50} 2 (1974), 309-354.
  
  \bibitem[AWd$_1$]{AW} H. Araki, E. J. Woods, 
  {Complete boolean algebras of type I factors.} 
  \emph{Publ. RIMS Kyoto University Ser. A} \textbf{2} (1966), 157-242.
  
  \bibitem[AWd$_2$]{Araki Woods}
  H. Araki, E.J. Woods,
  A classification of factors,
  \emph{Publ. RIMS Kyoto University Ser. A}
  Vol. \textbf{3} (1968), 51-130. 
  
  \bibitem[AWy]{Araki Wyss}
  H. Araki, W. Wyss,
  Representations of canonical anticommunication relations,
  \emph{Helv. Phys. Acta}
  37 (1964) 139-159.
 
  \bibitem[ArY]{Araki Yamagami}
  H. Araki, S. Yamagami,
  On quasi-equivalence of quasifree states of the canonical commutation relations,
  \emph{Publ. RIMS, Kyoto Univ.}
  \textbf{18} (1982), 283-338.
 
  \bibitem[Arv]{Arv}
  W. Arveson,
  ``Noncommutative dynamics and E-semigroups'',
  \emph{Springer monographs in mathematics},
  Springer, New York-Heidelberg 2003.
  
  \bibitem[Bak]{Bak}
  B.M. Baker,
Free states of the gauge invariant canonical anticommutations relations, 
  \emph{Transactions of the American Mathematical Soceity},
  Volume 237, March 1978.
 
  
  \bibitem[BhS]{pdct} B. V. Rajarama Bhat and R. Srinivasan, \emph{On product systems arising from sum systems} Infinite dimensional analysis and related topics, Vol. 8, Number 1, March 2005.
  
\bibitem[BISS]{BISS} Panchugopal Bikram, Masaki Izumi, R. Srinivasan and V.S. Sunder, {\em On Extendability of Endomorphisms and \en-semigroups of factors}, to appear in Kyushu Journal of Mathematics.
  
 
 \bibitem[Bk]{Bk} Panchugopal Bikram, {\em Non-extendable endomorphisms and \en-semigroups on Type III factors},In?nite Dimensional Analysis, Quantum Probability and Related Topics Vol. 17,No. 4 (2014) 
   
  \bibitem[BrR]{BrR}
  O.\,Bratteli and D.W.\,Robinson,
  ``Operator Algebras and Quantum Statistical Mechanics I,
  $C^*$- and $W^*$-algebras,
  Symmetry Groups, Decomposition of States,''
  \emph{Texts and Monographs in Physics},
  Springer-Verlag, New York-Heidelberg, 1979.
  
   \bibitem[Con]{Connes}
   A. Connes,
   Une classification des facteurs de type III,
   \emph{Annales scientifiques de l’\'E.N.S. 4$^e$ s\'erie},
   tome \textbf{6}, no 2 (1973), 133-252.
   
   \bibitem[Cnw]{Conway}
   J.B. Conway,
   ``A Course in Functional Analysis,''
   Second Edition,
   \emph{Graduate Texts in Mathematics},
   Springer, New York-Heidelberg, 1990.
   
   \bibitem[DAn]{dell antonio}
   G.F. Dell'Antonio,
   Structure of the algebras of some free systems,
   \emph{Comm. Math. Phys.}, \textbf{9} (1968), 81-117.
  
  \bibitem[Hol]{holevo}
   A.S. Holevo
   Quasifree states of the $C^*$-algebra of CCR. II,
   \emph{Theoretical and Mathematical Physics}, Vol. \textbf{6}, Issue 2 (1971), 103-107.
  
    \bibitem[IS$_1$]{genccr} M. Izumi, R. Srinivasan,
  {Generalized CCR flows},
  \emph{Comm. Math. Phys.}, \textbf{281} (2008), 529-571.

  \bibitem[IS$_2$]{toepcar} M. Izumi, R. Srinivasan,
  {Toeplitz CAR flows and type I factorizations},
  \emph{Kyoto J. Math.}, \textbf{50}, no. 1 (2010), 1-32.
  
  \bibitem[Kat]{Kato} T. Kato,
  ``Perturbation theory for linear operators,''
  \emph{Classics in Mathematics},
  Springer-Verlag, Berlin-Heidelberg, 1995.
  
   \bibitem[VL]{lieb} V. Liebscher: {\em Random sets and invariants for (type II) continuous tensor product systems of Hilbert spaces}, Mem. Amer. Math. Soc., 199 (2009)
  
  \bibitem[MaS$_1$]{MS1}
  O. Margetts, R. Srinivasan,
  Invariants for \en-semigroups on II$_1$ factors,
  preprint, to appear in \emph{Comm. Math. Phys.}
  
\bibitem[MaS$_1$X]{MS1-ar}
  O. Margetts, R. Srinivasan, arXiv:1209.1283 [math.OA]
  
   \bibitem[MaS$_2$]{MS2}
  O. Margetts, R. Srinivasan,
  Non-cocycle-conjugate \en-semigroups on factors,
  preprint. 
 
% \bibitem[MY]{MY} T.Murakami, S.Yamagami,On Types of Quasifree Representations of Clifford Algebras, \emph{Publ RIMS, Kyoto Univ.} 31 (1995), 33-44

 
   
  \bibitem[Par]{KRP} K. R. Parthasarathy,
  ``An Introduction to Quantum Stochastic Calculus,''
  Birkauser Basel, Boston, Berlin (1992).
  
  \bibitem[Pet]{petz}
  D.\,Petz,
  ``An Invitation to
  the Algebra of Canonical Commutation Relations,''
  \emph{Leuven Notes in Mathematical and Theoretical Physics.
  Series A: Mathematical Physics}
  \textbf{2}
  Leuven University Press, 1990.
  
  \bibitem[Pow1]{Pow}
  R. Powers,
  An index theory for $*$-endomorphisms of $B(H)$ and type \twoone factors,
  \emph{Can. J. Math.}
  \textbf{40}, no.\ 1 (1988), pp.\ 86-114.
  
  \bibitem[Pow2]{pow2} R. T. Powers, 
A nonspatial continuous semigroup of $*$-endomorphisms of $B(H)$. 
\emph{Publ. Res. Inst. Math. Sci}.  \textbf{23} (1987), 1053-1069.
  
  \bibitem[PS]{PS} R. T. Powers, and E. St{\o}rmer, 
Free States of the Canonical Anticommutation Relations. 
\emph{Comm. Math. Phys.}, \textbf{16} (1970), 1-33.

  
 % \bibitem[Tak]{Takesaki} ``Theory of Operator Algebras II,'' \emph{Encyclopaedia of Mathematics Series} \textbf{125}, Springer, Berlin, 2003.
  
	 \bibitem[Tsi]{T1} B. Tsirelson,  \emph{Non-isomorphic product systems.}  Advances in Quantum Dynamics (South Hadley, MA, 2002), 273--328,  Contemp. Math., 335, Amer. Math. Soc., Providence, RI, 2003.
  
  
  
  

  

  

  
  
 \end{thebibliography}
\end{document}